\newcounter{teller}
\newenvironment{tabel}{\begin{list}%
{\rm  (\alph{teller})\hfill}{\usecounter{teller} \leftmargin=1.1cm
\labelwidth=1.1cm \labelsep=0cm \parsep=0cm}
                      }{\end{list}}
\newcounter{tellerr}
\newcommand{\ca}{{\mathcal A}}
\newcommand{\cb}{{\mathcal B}}
\newcommand{\cc}{{\mathcal C}}
\newcommand{\cd}{{\mathcal D}}
\newcommand{\cg}{{\mathcal G}}
\newcommand{\cl}{{\mathcal L}}
\begin{document}
\title{Equivalence of Connected and Peak-Pit Maximal Condorcet Domains}
%
%
\author{Guanhao Li\orcidID{0009-0003-4732-415X}}
\authorrunning{G. Li}
%
\institute{Department of Mathematics, University of Auckland, New Zealand
\email{gli103@aucklanduni.ac.nz}
}
\maketitle              
\begin{abstract}
This paper provides a combinatorial proof to show that, in the study of maximal Condorcet domains, the class of peak-pit Condorcet domains, the class of connected Condorcet domains, and the class of directly connected Condorcet domains are all equivalent.
\keywords{Combinatorics \and Condorcet Domains \and Social Choice Theory}
\end{abstract}
\section{Introduction}
%
Condorcet domains are sets of linear orders on a given set of alternatives such that any majority relation induced by these orders is acyclic;
see \cite{monjardetb.2009}  for seven different yet equivalent definitions.
These domains hold a crucial place in combinatorics and are especially useful in social choice theory, where they help address voting paradoxes and aim for more rational ways to aggregate preferences \cite{arrowk.j.1951}.
Structurally, Condorcet domains can be represented as graphs, and research often focuses on their connectedness properties, particularly in peak-pit Condorcet domains. 
From a computational perspective, these domains present significant challenges, which have led to the development of efficient algorithms from combinatorial optimization to handle real-world voting scenarios \cite{brandtf.conitzerv.endrissu.langj.procacciaa2016}.
Additionally, counting and listing preference orderings within these domains helps classify different types of domains \cite{gehrleinw.2006}. 
One of the major research questions in the literature focuses on finding the largest size of Condorcet domains for a given number of alternatives \cite{monjardetb.2009}. A central research question involves characterising maximal Condorcet domains by combinatorical objects.

The class of peak-pit maximal Condorcet domains has been a primary object of investigation in the search for large Condorcet domains. Using combinatorial tools and computational methods, the largest sizes of this class have been proven to match the maximal cardinality of Condorcet domains for $n\leq 8$ alternatives \cite{fishburnp.c.1997,fishburnp.c.2002,galambosa.v.reinerv.2008,leedhamgreen.cmarkström.kriiss.2024}. Consequently, it has been shown that all such peak-pit maximal Condorcet domains are connected, where any two linear orders in the domain can be transformed into one another through a sequence of adjacent swaps within the domain. 
\cite{brandtf.ledererp.tauschs.2022} demonstrated that connected Condorcet domains can support strategyproof social decision schemes. 
This means that within such connected domains, voters have no incentive to misrepresent their preferences, leading to more honest and reliable outcomes. 
Thus, understanding connected Condorcet domains is crucial for designing voting systems that are resistant to paradoxes and manipulation, ensuring fairer decision-making processes.


Previous research has shown some connections between these two subclasses of Condorcet domain, yet the question of their equivalence remains open.
\cite{danilovv.i.karzanova.v.koshevoyg.a.2012} proved that all peak-pit maximal Condorcet domains of maximal width are connected. \cite{lig.2020} first used a mathematical method and proved that even without the restriction of maximal width, connectedness and peak-pittedness are equivalent on four alternatives. Later, \cite{lig.2023} attempted to show this equivalence is valid for any arbitrary number of alternatives. However, \cite{puppec.slinkoa.2024a} provided a counter-example showing that the work of \cite{lig.2023} is incomplete. Since their counter-example does not correspond to a maximal Condorcet domain, the problem remains open. \cite{puppec.slinkoa.2024b} conjectured this equivalence in their survey. In this paper, we aim to work from a different perspective and show the equivalence of connected and peak-pit maximal Condorcet domains.

Furthermore, within the class of connected Condorcet domains, the subclass of directly connected Condorcet domains consists of those domains where any two orders can be connected by a shortest path in the permutahedron, with all intermediate orders lying within the domain. 
This is equivalent to the { non‐restoration property} identified by \cite{satos.2013}, according to which any two linear orders can be linked by a path along which no pair of alternatives is swapped more than once.  \cite{satos.2013} showed that direct connectedness is exactly the feature underpinning the equivalence between local and global strategy‐proofness.  Moreover, \cite{satos.2013} proved that directly connected single‐peaked domains admit concise characterisations of strategy‐proof rules.
Hence, it is practically important to characterise directly connected domains in the context of Condorcet domains.
\cite{lig.puppec.slinkoa.2021} showed that all peak-pit maximal Condorcet domains of maximal width are not just connected but in fact directly connected. In this paper, we will show that the connectedness property is equivalent to the directly connectedness property within the class of maximal Condorcet domains.

Finally, our findings establish that the class of {\em maximal peak-pit Condorcet domains} is the same as the class of {\em peak-pit maximal Condorcet domains}. While there is a belief in the literature that these two concepts are equivalent, and the terms are often used interchangeably. However, no formal mathematical definitions have been provided to clarify their relationship, which leads to confusion. This paper resolves this ambiguity by providing a formal definition and proving their equivalence.
\section{Condorcet domains}


Suppose $A$ is a finite set of alternatives. Any set of linear orders on $A$ is called {\bf a domain of linear orders}, or shortly {\bf a domain}. Let $\cl(A)$ denote the set of all linear orders on $A$. Let $\cd\subseteq \cl(A)$ be a domain of linear orders on $A$. 
We denote the {\bf restriction} of $\cd$ to any subset $S$ of $A$ by $\cd_S$, where $\cd_S$ is also called the {\bf restricted domain} of $\cd$ to $S$. The restriction of $\cd$ to $A\setminus \{a\}$ is denoted by $\cd_{-a}$, where $a\in A$. The restriction of $\cd$ to $A\setminus X$ is denoted by $\cd_{-X}$, where $X\subset A$. 
For every linear order $R\in \cd$, the {\bf restriction} of $R$ to $A\setminus\{a\}$ is denoted by $R_{-a}$, where $R_{-a}$ is also called the {\bf restricted linear order} of $R$ to $A\setminus\{a\}$. The restriction of $R$ to a subset $S$ of $A$ is denoted by $R_S$. 

\begin{definition}
Let $A$ be a finite set of alternatives and $\cd\subseteq \cl(A)$ a domain of linear orders. Let $a,b,c\in A$ be a triple of distinct alternatives. Let $x\in\{a,b,c\}$ and $k\in\{1,2,3\}$. Then we say that the restricted domain $\cd_{\{a,b,c\}}$ satisfies the {\bf never-condition} $xN_{\{a,b,c\}}k$ if for all $R=x_1x_2x_3\in \cd_{\{a,b,c\}}$, one has $x\ne x_k$.
We say that $\cd_{\{a,b,c\}}$ satisfies a {\bf never-top condition} if there exists an $x\in\{a,b,c\}$ such that $\cd_{\{a,b,c\}}$ satisfies the never-condition $xN_{\{a,b,c\}}1$. 
Similarly, we say that $\cd_{\{a,b,c\}}$ satisfies a {\bf never-bottom condition} if there exists an $x\in\{a,b,c\}$ such that $\cd_{\{a,b,c\}}$ satisfies the never-condition $xN_{\{a,b,c\}}3$. 
The definition for $\cd_{\{a,b,c\}}$ satisfying a {\bf never-middle condition} is similar. 

We say that $\cd$ is a {\bf never-top domain} if for every distinct triple $a,b,c\in A$, the restricted domain $\cd_{\{a,b,c\}}$ satisfies a never-top condition. 
Similarly, we say that $\cd$ is a {\bf never-bottom domain} if for every distinct triple $a,b,c\in A$, the restricted domain $\cd_{\{a,b,c\}}$ satisfies a never-bottom condition. 
The definition for a {\bf never-middle domain} is similar.
We say that $\cd$ is a {\bf Condorcet domain} if for every distinct triple $a,b,c\in A$, the restricted domain $\cd_{\{a,b,c\}}$ is a never-top domain or never-bottom domain or never-middle domain.
A domain $\cd$ is called a {\bf peak-pit Condorcet domain} if for every distinct triple $a,b,c\in A$, the restricted domain $\cd_{\{a,b,c\}}$ is a never-top domain or never-bottom domain.
A never-condition that is a never-top or never-bottom condition is called a {\bf peak-pit condition}.
\end{definition}

\begin{remark}
    A domain $\cd$ is a Condorcet domain if and only if for every distinct $a,b,c\in A$, the restricted domain $\cd_{\{a,b,c\}}$ is a Condorcet domain.
\end{remark}



Given a Condorcet domain $\cd$,  the {\bf set of never-conditions satisfied by $\cd$} is denoted by $N(\cd)$, and the {\bf set of peak-pit-conditions satisfied by $\cd$} is denoted by $N_p(\cd)$.

One can order Condorcet domains by set inclusion. A Condorcet domain $\mathcal{D}\subseteq\mathcal{L}(A)$ is called \textbf{maximal}\index{maximal Condorcet domain} if for every Condorcet domain $\mathcal{D}'$ from $\mathcal{L}(A)$ with $\mathcal{D}\subseteq\mathcal{D}'$, it follows that $\mathcal{D}=\mathcal{D}'$. 
Equivalently, it means that a Condorcet domain $\mathcal{D}$ is maximal if for every linear order $R\notin \mathcal{D}$,  the union $\mathcal{D}\cup\{R\}$ is not a Condorcet domain. We are interested in maximal Condorcet domains which are peak-pit Condorcet domains.  
The maximal Condorcet domains for three alternatives are well-known (see, e.g., \cite{fishburnp.c.1997}). It will still be useful for us to have the following lemma.

\begin{lemma}\label{3max}
    Let $a, b, c$ be three distinct alternatives. Define
    \begin{align*}
\cd_{3,t} &= \{abc, acb, cab, cba\}, \\
\cd_{3,m} &= \{abc, bca, acb, cba\} \text{ and } \\
\cd_{3,b} &= \{abc, bac, bca, cba\}.
\end{align*}
    Let $\cd\subseteq\cl(\{a,b,c\})$ be a Condorcet domain. Then the following are equivalent:
    \begin{description}
            \item[(i) ] $\cd$ is a maximal Condorcet domain;
            \item[(ii) ] $|\cd|=4$.
            \item[(iii) ] $\cd$ is isomorphic to $\cd_{3,t}$ or $\cd_{3,m}$ or $\cd_{3,b}$.
            \end{description}
\end{lemma}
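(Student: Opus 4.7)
The plan is a cyclic implication (i) $\Rightarrow$ (ii) $\Rightarrow$ (iii) $\Rightarrow$ (i) that hinges on one elementary counting fact: for each of the nine never-conditions $xN_{\{a,b,c\}}k$ with $x\in\{a,b,c\}$ and $k\in\{1,2,3\}$, exactly four of the six linear orders on $\{a,b,c\}$ satisfy it, the other two being the orders that place $x$ in position $k$. Consequently, any set of four orders sharing a common never-condition is automatically a Condorcet domain, whereas any set of five or more orders violates every never-condition and therefore is not a Condorcet domain.

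For (i) $\Rightarrow$ (ii), I would pick a never-condition $xN_{\{a,b,c\}}k$ witnessed by $\cd$ and let $\cd^{\star}$ denote the four-element set of all linear orders satisfying it. Then $\cd\subseteq\cd^{\star}$ and $\cd^{\star}$ is itself a Condorcet domain, so maximality forces $\cd=\cd^{\star}$ and $|\cd|=4$. For (ii) $\Rightarrow$ (iii), the same observation shows that $\cd$ coincides with the four-element set of orders satisfying some never-condition $xN_{\{a,b,c\}}k$; a case split on $k$, combined with a relabeling of $\{a,b,c\}$ that sends the witnessing $x$ to the desired one, identifies $\cd$ with $\cd_{3,t}$ when $k=1$, $\cd_{3,m}$ when $k=2$, and $\cd_{3,b}$ when $k=3$. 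Finally, for (iii) $\Rightarrow$ (i), each of $\cd_{3,t},\cd_{3,m},\cd_{3,b}$ has size four, and any strict superset would have size at least five, hence by the counting fact could not be a Condorcet domain. Since isomorphisms preserve both the Condorcet-domain property and maximality, the cycle is closed.

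The only step that requires a moment's care is the relabeling in (ii) $\Rightarrow$ (iii). Concretely, one verifies by direct inspection that $\cd_{3,t}$, $\cd_{3,m}$, and $\cd_{3,b}$ arise from the never-conditions $bN_{\{a,b,c\}}1$, $aN_{\{a,b,c\}}2$, and $bN_{\{a,b,c\}}3$ respectively, and then observes that the group of permutations of $\{a,b,c\}$ acts transitively on the three possible choices of $x$ for each fixed $k$, so these three representatives cover all nine never-conditions up to isomorphism. No step is a genuine obstacle; the whole proof is driven by the counting fact in the first paragraph.
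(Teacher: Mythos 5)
Your proposal is correct and follows essentially the same route as the paper: identify a never-condition satisfied by $\cd$, observe that the orders obeying it form one of three four-element sets which are themselves Condorcet domains, and let maximality (resp.\ cardinality) force equality, with the relabeling argument handling the isomorphism statement. The only difference is cosmetic: you organize the argument as a cycle (i)$\Rightarrow$(ii)$\Rightarrow$(iii)$\Rightarrow$(i) and spell out explicitly the counting fact that any five or more orders violate every never-condition, which the paper uses implicitly when it asserts that the three four-element sets are maximal.
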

\begin{proof}
Note that since $\cd$ is a Condorcet domain, there are $x\in{\{a,b,c\}}$ and $i\in\{1,2,3\}$ such that $\cd$ satisfies a never-condition $xN_{\{a,b,c\}}i$.
    Write $\{y,z\}=\{a,b,c\}\setminus\{x\}$.
    There are three cases for $\cd$.
Suppose $\cd$ satisfies the never-top condition $xN_{\{a,b,c\}}1$.
Then $\cd\subseteq\{yxz,yzx,zxy,zyx\}$.
Suppose $\cd$ satisfies the never-middle condition $xN_{\{a,b,c\}}2$.
Then $\cd\subseteq\{xyz,xzy,yzx,zyx\}$.
Suppose $\cd$ satisfies the never-bottom condition $xN_{\{a,b,c\}}3$.
Then $\cd\subseteq\{xyz,xzy,yxz,zxy\}$.
Note that $\{yxz,yzx,zxy,zyx\}$, $\{xyz,xzy,yzx,zyx\}$, and $\{xyz,xzy,yxz,zxy\}$ are Condorcet domains.

{\bf \text{(i)} $\Leftrightarrow$ \text{(ii)}:} Suppose $\cd$ is a maximal Condorcet domain. Then maximality implies that $\cd$ equals $\{yxz$, $yzx$, $zxy$, $zyx\}$ or $\{xyz$, $xzy$, $yzx$, $zyx\}$ or $\{xyz$, $xzy$, $yzx$, $zyx\}$. Hence, $|\cd|=4$.

On the other hand, suppose $|\cd|=4$. Then $\cd$ equals $\{yxz$, $yzx$, $zxy$, $zyx\}$ or $\{xyz$, $xzy$, $yzx$, $zyx\}$ or $\{xyz, xzy, yzx, zyx\}$.
Since each of these three possibilities is a maximal Condorcet domain, $\cd$ is a maximal Condorcet domain.

{\bf \text{(ii)} $\Leftrightarrow$ \text{(iii)}:} Suppose $|\cd|=4$. Then $\cd$ equals $\{yxz, yzx, zxy, zyx\}$ or $\{xyz$, $xzy$, $yzx$, $zyx\}$ or $\{xyz, xzy, yzx, zyx\}$.
Now $\{yxz,yzx,zxy,zyx\}$ is isomorphic to $\cd_{3,t}=\{abc,acb,cab,cba\}$, and $\{xyz,xzy,yzx,zyx\}$ is isomorphic to $\cd_{3,m}=\{abc,bca,acb,cba\}$, and $\{xyz,xzy,yzx,zyx\}$ is isomorphic to $\cd_{3,b}=\{abc,bac,bca,cba\}.$
Hence, $\cd$ is isomorphic to $\cd_{3,t}$ or $\cd_{3,m}$ or $\cd_{3,b}$.

On the other hand, suppose $\cd$ is isomorphic to $\cd_{3,t}$ or $\cd_{3,m}$ or $\cd_{3,b}$. Since $\cd_{3,t}$, $\cd_{3,m}$ and $\cd_{3,b}$ all have four linear orders, $|\cd|=4$.
\end{proof}

We note that there are only three maximal Condorcet domains on the set $A=\{a,b,c\}$ up to an isomorphism, as listed in Lemma~\ref{3max}. 
While the only never-condition that $\cd_{3,t}$ satisfies is the never-top condition $bN_{\{a,b,c\}}1$, the domain $\cd_{3,m}$ only satisfies the never-middle condition $aN_{\{a,b,c\}}2$, and $\cd_{3,b}$ only satisfies the never-bottom condition $bN_{\{a,b,c\}}3$.
Both $\cd_{3,t}$ and $\cd_{3,b}$ are peak-pit maximal Condorcet domains.\\

There are various definitions of connected Condorcet domains defined over the permutahedron. We will use the concept of paths of alike linear orders, which will be defined in next section.

\section{Paths of alike linear orders}
Suppose $R=r_1\cdots r_n$ and $T=t_1\cdots t_n$ are linear orders of $\cl(A)$. We say that $R$ and $T$ are {\bf alike} if they differ by a swap of adjacent alternatives in them. Precisely, there is an $i\in[n-1]$ such that $r_i=t_{i+1}$, $r_{i+1}=t_i$, and $r_j=t_j$ for all $j\in [n]\setminus\{i,i+1\}$. So, $T=r_1\cdots r_{i-1}r_{i+1}r_ir_{i+2}\cdots r_n$ in this case.
We call $(r_i, r_{i+1})$ the {\bf switching pair of alternatives} between $R$ and $T$, or, shortly, switching pair between $R$ and $T$. We do not specify the order of the alternatives in a switching pair, so the switching pair between $R$ and $T$ can also be written as $(r_{i+1},r_i)$.

\begin{definition}
   Let $R$ and $T$ be two linear order in $\cl(A)$. We say that $\ca=(R_1,\ldots, R_k)$ is a {\bf path of alike linear orders} connecting $R$ and $T$, or shortly a {\bf path connecting} $R$ and $T$, if $R_1=R, R_k=T$, and $R_i$ and $R_{i+1}$ are alike for all $i\in[k-1]$. The number $k\in\mathbb{N}$ is called the {\bf length} of the path $\ca$ connecting $R$ and $T$. 
   We say that $\ca=(R_1,\ldots, R_k)$ is a {\bf geodesic of alike linear orders} connecting $R$ and $T$, or shortly a {\bf geodesic connecting} $R$ and $T$, if 
   \begin{equation*}
\begin{split}
k &= \min\bigl\{\,m\in\mathbb{N} : (S_1,\dots,S_m)\text{ is a path of alike linear orders}\\
   &\qquad\qquad\text{connecting }R\text{ and }T\bigr\}.
\end{split}
\end{equation*}
\end{definition}

Note that the length of a geodesic connecting linear orders $R$ and $T$ is one more than the length of the reduced decomposition of a permutation sending $R$ to $T$ (see, e.g., \cite{orientedmatroids.1999}, Sec.~6.4). The Kendall tau distance between $R$ and $T$ is the number of pairs of alternatives that are ranked differently by $R$ and $T$. The length of a geodesic connecting $R$ and $T$ is one more than the Kendall tau distance between $R$ and $T$.

\begin{example}
Note that there are only six linear orders in $\cl({\{a,b,c\}})$, namely $$\cl({\{a,b,c\}})=\{abc,bac,bca,cba,cab,acb\}.$$ 
If an edge is used to connect any two alike linear orders in $\cl({\{a,b,c\}})$, we obtain a diagram that shows all paths connecting any two alike linear orders in $\cl({\{a,b,c\}})$, as shown in Figure~\ref{g3paths}, where the switching pair between two adjacent linear orders is labeled on the edge that connects them.

\begin{figure}[H]
 \centering
\begin{tikzpicture}[scale=2]

\draw[fill=black] (-1,0) circle (2pt);
\draw[fill=black] (0.5,0.866) circle (2pt);
\draw[fill=black] (-0.5,-0.866) circle (2pt);
\draw[fill=black] (1,0) circle (2pt);
\draw[fill=black] (-0.5,0.866) circle (2pt);
\draw[fill=black] (0.5,-0.866) circle (2pt);
 
\node at (-1.3,0) {$bac$};
\node at (-0.5,1.1) {$bca$};
\node at (0.5,1.1) {$cba$};
\node at (1.3,0) {$cab$};
\node at (-0.5,-1.1) {$abc$};
\node at (0.5,-1.1) {$acb$};

\node at (-1,-0.5) {$(a,b)$};
\node at (-1,0.5) {$(a,c)$};
\node at (0,1) {$(b,c)$};
\node at (1,0.5) {$(a,b)$};
\node at (1,-0.5) {$(a,c)$};
\node at (0,-1) {$(b,c)$};
 
\draw[thick] (1,0) -- (0.5,0.866)-- (-0.5,0.866)--(-1,0)--(-0.5,-0.866) --(0.5,-0.866)--(1,0);

\end{tikzpicture}
    \caption{All paths connecting any two linear orders in $\cl(\{a,b,c\})$.} \label{g3paths}
\end{figure}
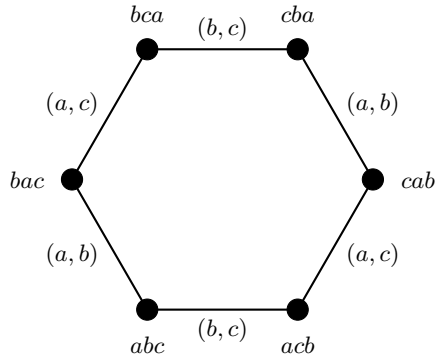 

While $\ca=(abc,bac,bca)$ and $\cb=(abc,acb,cab,cba,bca)$ are both paths connecting the same linear orders, $abc$ and $bca$, the path $\ca$ is a geodesic but $\cb$ is not since the number of linear orders in the latter one is not minimal. 
Note that $abc$ and $bca$ differ by three swaps of adjacent alternatives, and and this is also the number of swaps in the shortest path (geodesic) connecting $abc$ and $bca$. This tells us some obvious facts about paths of alike linear orders.
\end{example}

Let $\ca=(R_1,\ldots, R_k)$ be a path of alike linear orders, 
we use $K(\ca)=\{R_1,\ldots, R_k\}$ to represent the {\bf set of linear orders from $\ca$}.
We denote $S(\ca)$ to be its {\bf sequence of switching pairs of alternatives}. Observe that each path of alike linear orders is uniquely determined by its sequence of switching pairs of alternatives. The number of switching pairs in $S(\ca)$ is called the {\bf length} of $S(\ca)$.
Note that the length of $S(\ca)$ is always one less than the length of $\ca$.

Let $\ca=(R_1,\ldots, R_k)$ be a path of alike linear orders on $\cl(A)$. Let $a\in A$ and $B\subseteq A$. Then the {\bf restricted path} $\ca_B$ is the path of alike linear orders on $\cl(B)$ obtained from $(R_1)_B,\ldots, (R_k)_B$ and removing duplicate equal linear orders which are next to each other.
Similarly, the {\bf restricted path} $\ca_{-a}$ is the path of alike linear orders on $\cl(A_{-a})$ obtained from $(R_1)_{-a},\ldots, (R_k)_{-a}$ and removing duplicate equal linear orders which are next to each other.
Note that the length of $\ca_B$ can be less than the length of $\ca$. We also note that switching pairs of alternatives in a path are defined over a pair of alike linear orders in the path. Hence, if $(a,b)$ is a switching pair in a path, then it must occur at least once in it.

  Let $(a,b)$ and $(c,d)$ be two switching pairs in a geodesic $\ca$. If $(a,b)$ occurs before $(c,d)$ in $S(\ca)$, we denote that by $$(a,b)\vartriangleleft (c,d).$$ We similarly define $\trianglelefteq$.
  

We will now investigate the restriction of any geodesic to a triple of alternatives that differ by three swaps of adjacent alternatives.
Let $R=abc$ and $T=cba$. Then $\ca_1=(abc,bac,bca,cba)$ and $\ca_2=(abc,acb,cab,cba)$ are the only possible geodesics connecting them, as shown in Figure~\ref{g3paths}. 
Note that both geodesics rise to peak-pit Condorcet domains, since they connect $abc$ and $cba$, the property in Lemma~\ref{wex0}\ref{wex0b} holds for them.
These two geodesics can be represented by the wiring diagrams shown in Figure~\ref{twogeodesics}.

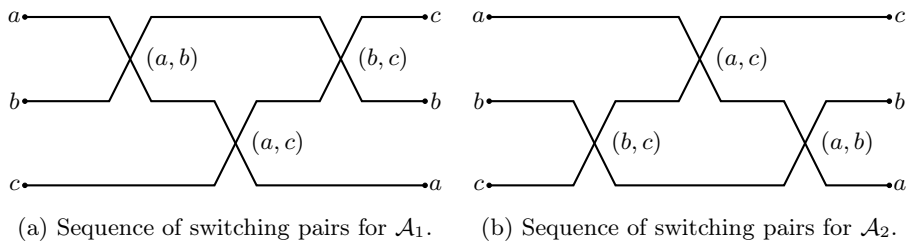
\begin{figure}[H]
 \begin{subfigure}[b]{0.5\textwidth}
 \centering
\begin{tikzpicture}[scale=0.28]
\draw[fill=black] (0,0) circle (3pt);
\draw[fill=black] (0,4) circle (3pt);
\draw[fill=black] (0,8) circle (3pt);
\draw[fill=black] (19,0) circle (3pt);
\draw[fill=black] (19,4) circle (3pt);
\draw[fill=black] (19,8) circle (3pt);
 
\node at (-0.5,0) {$c$};
\node at (-0.5,4) {$b$};
\node at (-0.5,8) {$a$};
\node at (19.5,0) {$a$};
\node at (19.5,4) {$b$};
\node at (19.5,8) {$c$};

\node at (12,2) {$(a,c)$};
\node at (17,6) {$(b,c)$};
\node at (7,6) {$(a,b)$};

\draw[thick] (0,0)-- (9,0) --(11,4) --(14,4) --(16,8) -- (19,8);
\draw[thick] (0,4) -- (4,4) --(6,8) --(14,8)--(16,4) --(19,4);
\draw[thick] (0,8) -- (4,8) --(6,4)-- (9,4) --(11,0) -- (19,0);
\end{tikzpicture}
   \caption{Sequence of switching pairs for $\ca_1$.}\label{}
\end{subfigure}
\begin{subfigure}[b]{0.5\textwidth}
 \centering 
\begin{tikzpicture}[scale=0.28]
\draw[fill=black] (0,0) circle (3pt);
\draw[fill=black] (0,4) circle (3pt);
\draw[fill=black] (0,8) circle (3pt);
\draw[fill=black] (19,0) circle (3pt);
\draw[fill=black] (19,4) circle (3pt);
\draw[fill=black] (19,8) circle (3pt);
 
\node at (-0.5,0) {$c$};
\node at (-0.5,4) {$b$};
\node at (-0.5,8) {$a$};
\node at (19.5,0) {$a$};
\node at (19.5,4) {$b$};
\node at (19.5,8) {$c$};

\node at (12,6) {$(a,c)$};
\node at (17,2) {$(a,b)$};
\node at (7,2) {$(b,c)$};

\draw[thick] (0,0)-- (4,0)--(6,4)--(9,4) --(11,8) -- (19,8);
\draw[thick] (0,4) -- (4,4) --(6,0) --(14,0)--(16,4) --(19,4);
\draw[thick] (0,8) -- (9,8) --(11,4)--(14,4)--(16,0) -- (19,0);
\end{tikzpicture}
   \caption{Sequence of switching pairs for $\ca_2$.}\label{}
 
\end{subfigure}
\caption{Sequences of switching pairs for two geodesics connecting $abc$ and $cba$.}\label{twogeodesics}
\end{figure}

Note that the sequences of switching pairs for $\ca_1$ and $\ca_2$ are in reversed order. While the set of linear orders from $\ca_1$ only satisfies one never-condition, which is the never-bottom condition $bN_{\{a,b,c\}}3$, the set from $\ca_2$ also only satisfies one never-condition, which is the never-top condition $bN_{\{a,b,c\}}1$. 
As a result, we refer to the first geodesic $\ca_1$ as a {\bf never-bottom geodesic}, while the second geodesic $\ca_2$ is referred to as a {\bf never-top geodesic}.
It is important to note that the switching pairs occurring in $\ca_1$ involve first switching the top two alternatives, i.e., $(a,b)$, while those in $\ca_2$ involve first switching the bottom two ranked alternatives, i.e., $(b,c)$. This dichotomy can be generalised into the following lemma.

\begin{lemma}\label{2geodesics}
    Let $a,b,c$ be distinct alternatives. Let $T\in\cl(\{a,b,c\})$. Let $\ca=(R_1,\ldots, R_k)$ be a geodesic on $\cl(\{a,b,c\})$ connecting $abc$ and $T$. Then the following are equivalent:
    \begin{description}
            \item[(I) ] $T=cba$;
            \item[(II) ] $ (a,b)$, $(a,c)$ and $(b,c)$ are switching pairs in $S(\ca)$.
    \end{description}
    Now suppose $T=cba$.
    Then $\{R_1,\ldots, R_k\}$ satisfies a unique never-condition which is either $bN_{\{a,b,c\}}3$ or $bN_{\{a,b,c\}}1$. Moreover,
    \begin{tabel}
        \item\label{2geodesicsa} The following are equivalent: \begin{description}
            \item[(i) ] $N(\{R_1,\ldots, R_k\})=\{bN_{\{a,b,c\}}3\}$;
            \item[(ii) ] $\ca=(abc,bac,bca,cba)$, that is it is a never-bottom geodesic;
            \item[(iii) ] $S(\ca):  (a,b)\vartriangleleft(a,c)\vartriangleleft(b,c)$.
            \end{description}
        \item\label{2geodesicsb} The following are equivalent: \begin{description}
        \item[(i) ] $N(\{R_1,\ldots, R_k\})=\{bN_{\{a,b,c\}}1\}$;
            \item[(ii) ] $\ca=(abc,acb,cab,cba)$, that is it is  a never-top geodesic;
            \item[(iii) ] $S(\ca):  (b,c)\vartriangleleft(a,c)\vartriangleleft(a,b)$.
                    \end{description}
    \end{tabel}
\end{lemma}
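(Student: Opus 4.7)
\medskip

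\noindent\textbf{Proof plan.} My plan is to exploit the fact that the permutahedron on three alternatives is very small (only six vertices, arranged in a hexagon as shown in Figure~\ref{g3paths}), so I can establish everything by a short Kendall‐distance argument followed by a direct enumeration of the two geodesics from $abc$ to $cba$.

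First I would handle the equivalence of (I) and (II). The Kendall tau distance between $abc$ and $cba$ equals $3$ (all three pairs $\{a,b\},\{a,c\},\{b,c\}$ are inverted), so by the remark linking geodesic length to Kendall distance, any geodesic connecting them has length $4$ and thus $S(\ca)$ has length $3$. Since along a geodesic each adjacent swap strictly decreases the Kendall distance to the endpoint, no pair can be switched twice, so the three switching pairs in $S(\ca)$ are exactly $(a,b),(a,c),(b,c)$. This gives (I)$\Rightarrow$(II). Conversely, if all three pairs appear in $S(\ca)$, then $|S(\ca)|\ge 3$; but the Kendall distance from $abc$ to any other linear order on $\{a,b,c\}$ is at most $3$, so $|S(\ca)|=3$, each pair occurs exactly once, and thus every pair of alternatives is inverted between $R_1=abc$ and $T$, forcing $T=cba$.

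Next, assuming $T=cba$, I would note that in $R_1=abc$ only the pairs $(a,b)$ and $(b,c)$ are adjacent, so the first switching pair in $S(\ca)$ is one of these two. Each choice determines the entire geodesic uniquely: starting with $(a,b)$ yields $(abc,bac,bca,cba)$, which is $\ca_1$ with $S(\ca_1):(a,b)\vartriangleleft(a,c)\vartriangleleft(b,c)$; starting with $(b,c)$ yields $(abc,acb,cab,cba)$, which is $\ca_2$ with $S(\ca_2):(b,c)\vartriangleleft(a,c)\vartriangleleft(a,b)$. This establishes (ii)$\Leftrightarrow$(iii) in both \ref{2geodesicsa} and \ref{2geodesicsb}, and also gives the statement that these are the only two geodesics.

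Finally, I would read off the never-conditions of $K(\ca_1)$ and $K(\ca_2)$ directly from the positions occupied by $b$. In $\{abc,bac,bca,cba\}$, the alternative $b$ appears in positions $2,1,1,2$, so position $3$ is forbidden for $b$; checking that $a$ and $c$ each occupy every position, one sees $N(K(\ca_1))=\{bN_{\{a,b,c\}}3\}$. Symmetrically, in $\{abc,acb,cab,cba\}$, $b$ occupies positions $2,3,3,2$, yielding $N(K(\ca_2))=\{bN_{\{a,b,c\}}1\}$. Since $K(\ca_1)\neq K(\ca_2)$, these sets uniquely identify each geodesic, closing the loop (i)$\Leftrightarrow$(ii) in both parts and establishing that exactly one of $bN_{\{a,b,c\}}3$ and $bN_{\{a,b,c\}}1$ holds. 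There is no serious obstacle; the only point requiring a little care is ruling out any other never-conditions, which is immediate from the position tallies above.
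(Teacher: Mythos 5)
Your proposal is correct and follows essentially the same route as the paper: identify that a geodesic from $abc$ to $cba$ must contain all three switching pairs exactly once, enumerate the only two such geodesics $(abc,bac,bca,cba)$ and $(abc,acb,cab,cba)$ on the hexagon, and read off their unique never-conditions to match (i), (ii), (iii). The differences (Kendall-distance phrasing instead of figure inspection, and the "first swap must be an adjacent pair of $abc$" argument) are cosmetic.
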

\begin{proof}
For (II) $\Rightarrow$ (I), since $(a,b)$, $(a,c)$ and $(b,c)$ are switching pairs in $S(\ca)$, we have that $abc$ and $T$ differ by three swaps of adjacent alternatives. Hence, $T=cba$. 
For (I) $\Rightarrow$ (II), suppose $T=cba$. Then $\ca$ be a geodesic connecting $abc$ and $cba$. Inspecting Figure~\ref{g3paths}, we note that there are exactly 2 geodesics on $\cl(\{a,b,c\})$ connecting $abc$ and $cba$, which are
$\ca_1=(abc,bac,bca,cba)$ and $\ca_2=(abc,acb,cab,cba)$.
In particular, (II) is valid. 
Note that $\ca_1$ gives the domain $\{abc$, $bac$, $bca$, $cba\}$ satisfying a unique never-condition $bN_{\{a,b,c\}}3$, while $\ca_2$ gives the domain $\{abc,acb,cab,cba\}$ satisfying a unique never-condition $bN_{\{a,b,c\}}1$.

(a) Suppose $\{R_1,\ldots, R_k\}$ satisfies the never-top condition $bN_{\{a,b,c\}}3$. Since $\ca$ connecting $abc$ and $cba$, the linear order $bac$ or $bca$ that ranks $b$ first needs to be in $\{R_1,\ldots, R_k\}$. Hence, $\ca=\ca_1=(abc,bac,bca,cba)$. In particular, the sequence of switching pairs $S(\ca)$ is $(a,b),(a,c),(b,c)$, as shown in Figure~\ref{twogeodesics}(a).
On the other hand, suppose $S(\ca)$ is $(a,b),(a,c),(b,c)$. Since $\ca$ connecting $abc$ and $cba$, the geodesic $\ca=\ca_1=(abc,bac,bca,cba)$. Thus, the domain $\{R_1,\ldots, R_k\}=\{abc,bac,bca,cba\}$ satisfies the never-top condition $bN_{\{a,b,c\}}3$.

(b) Suppose $\{R_1,\ldots, R_k\}$ satisfies the never-top condition $bN_{\{a,b,c\}}1$. Since $\ca$ connecting $abc$ and $cba$, the linear order $acb$ or $cab$ that ranks $b$ last needs to be in $\{R_1,\ldots, R_k\}$. Hence, $\ca=\ca_2=(abc,acb,cab,cba)$. In particular, the sequence of switching pairs $S(\ca)$ is $(b,c),(a,c),(a,b)$, as shown in Figure~\ref{twogeodesics}(b).
On the other hand, suppose $S(\ca)$ is $(b,c), (a,c),(a,b)$. Since $\ca$ connecting $abc$ and $cba$, the geodesic $\ca=\ca_2=(abc,acb,cab,cba)$. Thus, the domain $\{R_1,\ldots, R_k\}=\{abc,acb,cab,cba\}$ satisfies the never-top condition $bN_{\{a,b,c\}}1$.
\end{proof}



We will use paths of alike linear orders to define connectedness for a domain.

\begin{definition}Let $\cd \subseteq \cl(A)$ be a domain and $R,T\in \cd$. We say that $R$ and $T$ are {\bf connected} in $\cd$ if there exists a path of alike linear orders $\ca=(R_1,\ldots, R_k)$ connecting $R$ and $T$ such that  $R_i\in\cd$ for all $i\in[k]$. We say that $R$ and $T$ are {\bf directly connected} in $\cd$ if there exists a geodesic of alike linear orders $\ca=(R_1,\ldots, R_k)$ connecting $R$ and $T$ such that  $R_i\in\cd$ for all $i\in[k]$. We then call $\ca$ a {\bf geodesic on $\cd$ connecting} $R$ and $T$.
A domain $\cd$ is called {\bf connected} if for all $R,T\in \cd$, the linear orders $R$ and $T$ are connected in $\cd$. 
A domain $\cd$ is called {\bf directly connected} if for all $R,T\in \cd$, the linear orders $R$ and $T$ are directly connected in $\cd$. 
\end{definition}



Obviously, if a domain $\cd$ is directly connected, then it is connected by definition.

\begin{example}
    Note that there are only two peak-pit maximal Condorcet domains for three alternatives $a,b,c$, and which contain $abc$, namely,
    $$\cd_{3,b}=\{abc, bac,bca,cba\}\text{ and }\cd_{3,t}=\{abc,acb,cab,cba\}.$$
    While the only never-condition that $\cd_{3,b}$ satisfies is the never-bottom condition $bN_{\{a,b,c\}}3$,  the domain $\cd_{3,t}$ only satisfies the never-top condition $bN_{\{a,b,c\}}1$. However, both domains are connected and directly connected as shown in Figure~\ref{g3paths}.
\end{example}

\begin{remark}
Note that our definition of connectedness is equivalent to that given by \cite{satos.2013}. Moreover, by Lemmas~\ref{wex1.1}\ref{wex1.1a} and \ref{wex1.1}\ref{wex1.1b}, the directly connected domains defined here are equivalent to those satisfying the no-restoration property from \cite{satos.2013}.
\end{remark}

\section{Equivalence of Connectedness and peak-pittedness}

The following proposition is central to establishing the equivalence between connectedness and peak-pittedness. Due to its length and technical nature, the proof of Proposition~\ref{conex2} is provided in Appendix A.

\begin{proposition}\label{conex2}
    Let $\cd\subseteq\cl(A)$ be a peak-pit Condorcet domain. Let $R, T\in \cd$ be linear orders and $B\subseteq A$ be a subset such that $|B|\geq 3$. Then there exists a geodesic $\ca$  connecting $R_{B}$ and $T_{B}$ such that $\cd_{B}\cup K(\ca)$ is a peak-pit Condorcet domain. 
\end{proposition}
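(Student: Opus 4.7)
The plan is to prove the proposition by induction on the Kendall tau distance $d = d(R_B, T_B)$. The base case $d = 0$ is immediate: since $R_B = T_B$, the singleton $\ca = (R_B)$ is a geodesic with $K(\ca) = \{R_B\} \subseteq \cd_B$, so $\cd_B \cup K(\ca) = \cd_B$ remains peak-pit by hypothesis.

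For the inductive step with $d \geq 1$, the strategy is to exhibit a ``safe first swap'' and then recurse. Specifically, I would establish the following key lemma: there is an adjacent pair $(x, y)$ of alternatives in $R_B$, with $x$ immediately above $y$ in $R_B$ and $y$ above $x$ in $T_B$, such that the linear order $R^* \in \cl(B)$ obtained from $R_B$ by swapping $x$ and $y$ satisfies that $\cd_B \cup \{R^*\}$ is still a peak-pit Condorcet domain. Granted this lemma, $\cd_B \cup \{R^*\}$ is a peak-pit Condorcet domain on $B$ containing $R^*$ and $T_B$ with $d(R^*, T_B) = d - 1$, so applying the inductive hypothesis with $B$ itself playing the role of the ambient set yields a geodesic $\ca^* = (R^*, \ldots, T_B)$ such that $(\cd_B \cup \{R^*\}) \cup K(\ca^*)$ is peak-pit. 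Prepending $R_B$ produces $\ca = (R_B, R^*, \ldots, T_B)$, a geodesic from $R_B$ to $T_B$ of length $d + 1$; using $R_B \in \cd_B$ and $R^* \in K(\ca^*)$, we get $\cd_B \cup K(\ca) = \cd_B \cup \{R^*\} \cup K(\ca^*)$, which is peak-pit.

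The main obstacle will be proving the key lemma. Because $x$ and $y$ are adjacent in $R_B$, the only possible obstructions to $\cd_B \cup \{R^*\}$ being peak-pit come from triples $\{x, y, z\} \subseteq B$ where $z$ lies entirely above or entirely below both $x$ and $y$ in $R_B$; there, $R_{\{x,y,z\}}$ is $zxy$ or $xyz$ and $R^*_{\{x,y,z\}}$ becomes $zyx$ or $yxz$ accordingly. Combining the peak-pit never-conditions inherited by each $\cd_{\{x,y,z\}}$ from $\cd$ with the constraint that $T_{\{x,y,z\}} \in \cd_{\{x,y,z\}}$, and invoking Lemma~\ref{2geodesics} to handle the reverse-triple case, I plan to carry out a case analysis on the position of $T_{\{x,y,z\}}$ relative to $R_{\{x,y,z\}}$ and on which never-condition $\cd_{\{x,y,z\}}$ satisfies, showing that the swap is locally safe on such a triple unless a specific configuration occurs. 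To secure the existence of at least one globally safe adjacent inversion, I would argue by an extremality principle—for instance, choosing an adjacent inversion closest to the top of $R_B$—and show that any hypothetical failure in which every adjacent inversion is blocked by a witnessing $z$ propagates into a contradiction with the peak-pit structure of $\cd$ elsewhere in the order.
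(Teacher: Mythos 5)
Your inductive scaffolding is sound: inducting on the Kendall tau distance, prepending a ``safe first swap'' $R^*$ and applying the inductive hypothesis to the enlarged peak-pit domain $\cd_B\cup\{R^*\}$ does reduce the proposition to your key lemma, and conversely the key lemma follows from the proposition (take the second order on a good geodesic). But this equivalence is exactly the problem: all of the content of Proposition~\ref{conex2} is concentrated in the key lemma, and you have not proved it — your sketch defers precisely the hard step (``any hypothetical failure\ldots propagates into a contradiction'') without an argument. The local analysis you do give (only triples containing both swapped alternatives matter) is correct but trivial.

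Moreover, the one concrete idea you offer for producing a safe swap — an extremality principle such as taking the adjacent inversion closest to the top of $R_B$ — is false, and the bottommost choice fails as well. Take $\cd=\{abcd,\ acdb,\ dcba\}$, $R=abcd$, $T=dcba$. This is a peak-pit Condorcet domain: $\cd_{\{a,b,c\}}=\{abc,acb,cba\}$ and $\cd_{\{a,b,d\}}=\{abd,adb,dba\}$ satisfy $bN1$, $\cd_{\{a,c,d\}}=\{acd,dca\}$ is fine, and $\cd_{\{b,c,d\}}=\{bcd,cdb,dcb\}$ satisfies $cN3$. All three adjacent pairs of $R$ are inversions with respect to $T$, yet adding $bacd$ makes the restriction to $\{a,b,d\}$ equal to $\{abd,adb,dba,bad\}$, which satisfies no peak-pit condition, and adding $abdc$ makes the restriction to $\{b,c,d\}$ equal to $\{bcd,cdb,dcb,bdc\}$, likewise not peak-pit; only the middle swap $acbd$ is safe. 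So which adjacent inversion is safe is not determined by its position in $R_B$ but by the never-conditions $\cd$ imposes on triples involving alternatives far from the swap — which is exactly why the paper's proof works differently: it inducts on $|B|$, peels off the alternative $z$ ranked last in $R_B$, and splices the $z$-swaps into a geodesic on $B\setminus\{z\}$ that has first been re-ordered using the sets $C$ and $C_{NT}$, equivalent geodesics, and swap closures (Lemmas~\ref{conex2.1}--\ref{conex2.3} and the supporting Lemmas~\ref{conpropl1}--\ref{c232.1}). Without an actual argument exhibiting a safe swap, your proposal has a genuine gap at its central point.
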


We need one more lemma before showing our main theorems.

\begin{lemma}\label{wex1.9}
    Let $A$ be a set of at least four alternatives. Let
    $\cd\subseteq \cl(A)$ be a domain, and $a\in A$. Then \begin{tabel} 
\item\label{wex1.9a} If $\cd$ is connected, then $\cd_{-a}$ is connected. \item\label{wex1.9b} If $\cd$ is directly connected, then $\cd_{-a}$ is directly connected.
    \end{tabel}
\end{lemma}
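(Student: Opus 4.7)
The plan is to lift each element of $\cd_{-a}$ to an element of $\cd$, apply (direct) connectedness of $\cd$ to get a suitable path there, then push the path back down via the restriction operation $\ca \mapsto \ca_{-a}$ to obtain a path in $\cd_{-a}$. I also need to verify that the restriction operation behaves well enough to preserve the path structure for part \ref{wex1.9a} and the geodesic structure for part \ref{wex1.9b}.

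For \ref{wex1.9a}, take any $R, T \in \cd_{-a}$ and pick lifts $\hat R, \hat T \in \cd$ with $\hat R_{-a} = R$ and $\hat T_{-a} = T$; such lifts exist by the definition of $\cd_{-a}$. Connectedness of $\cd$ gives a path $\ca = (R_1, \ldots, R_k)$ of alike linear orders in $\cd$ connecting $\hat R$ and $\hat T$. For each consecutive pair $R_i, R_{i+1}$, either the switching pair between them involves $a$, in which case $(R_i)_{-a} = (R_{i+1})_{-a}$, or it does not involve $a$, in which case the two swapped alternatives are adjacent in $R_i$, so $a$ does not sit between them, and the two restrictions therefore differ by the same adjacent swap. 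After removing consecutive duplicates, the restricted path $\ca_{-a}$ is thus a genuine path of alike linear orders from $R$ to $T$, and every linear order appearing in it lies in $\cd_{-a}$ by construction.

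For \ref{wex1.9b}, repeat the construction but start from a geodesic $\ca$ in $\cd$ connecting $\hat R$ and $\hat T$. It remains to check that $\ca_{-a}$ is actually a geodesic in $\cl(A\setminus\{a\})$. Using the correspondence noted in the excerpt between geodesic length and Kendall tau distance, the length of $\ca$ equals $d(\hat R, \hat T) + 1$, and in any geodesic each disagreeing pair of alternatives is swapped exactly once. Hence the number of switches in $\ca$ involving $a$ equals the number of alternatives $x \ne a$ whose relative order with $a$ differs between $\hat R$ and $\hat T$, which is precisely $d(\hat R, \hat T) - d(R, T)$. Removing exactly these swaps (they are the ones absorbed as duplicates after restriction) yields a path of length $d(R, T) + 1$, which is the length of a geodesic between $R$ and $T$, so $\ca_{-a}$ is a geodesic in $\cd_{-a}$ connecting $R$ and $T$.

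The main technical point is the Kendall tau count in \ref{wex1.9b}; part \ref{wex1.9a} is essentially bookkeeping about how adjacent swaps interact with removing $a$, and no peak-pit or Condorcet hypothesis on $\cd$ is needed here — the lemma is purely about the connectedness structure.
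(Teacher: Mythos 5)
Your proposal is correct and follows essentially the same route as the paper: lift $R,T$ to $\cd$, apply (direct) connectedness there, and push the path/geodesic down via $\ca\mapsto\ca_{-a}$, noting that every order in the restricted path lies in $\cd_{-a}$. The only cosmetic difference is that you justify geodesic-preservation by an explicit Kendall tau count, whereas the paper delegates this to its Lemma~\ref{wex1.1} (each switching pair occurs exactly once, and removing the swaps involving $a$ leaves a minimal-length path); the two justifications are equivalent.
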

\begin{proof}
    (a) Suppose $\cd\subseteq \cl(A)$ is connected, and $a\in A$.
    Let $R, T\in \cd_{-a}$ be any two linear orders, then there are linear orders $R^*, T^*\in\cd$ such that $R^*_{-a}=R$ and $T^*_{-a}=T$. Since $\cd$ is connected, there is a path $\ca=(R^1,\ldots, R^k)$ on $\cd$ connecting $R^*$ and $T^*$.
    Hence, the restricted path $\ca_{-a}=(R^1_{-a},\ldots, R^{k'}_{-a})$ is a path connecting $R$ and $T$. Since $R^i\in\cd$, we have $R^i_{-a}\in\cd_{-a}$, for all $i\in [k]$. Hence, $\ca_{-a}$ is a path on $\cd_{-a}$ and so $\cd_{-a}$ is connected.
    
    (b) suppose $\cd\subseteq \cl(A)$ is directly connected, and $a\in A$. Let $R, T\in \cd_{-a}$ be any two linear orders, then there are linear orders $R^*, T^*\in\cd$ such that $R^*_{-a}=R$ and $T^*_{-a}=T$. Since $\cd$ is directly connected, there is a geodesic $\ca=(R^1,\ldots, R^k)$ on $\cd$ connecting $R^*$ and $T^*$.
    Hence, the restricted path $\ca_{-a}=(R^1_{-a},\ldots, R^{k'}_{-a})$ is a path connecting $R$ and $T$. Moreover, since $\ca$ is a geodesic and $S(\ca_{-a})$ is obtained by removing all switching pairs that contain $a$ from $S(\ca)$, we have that $\ca_{-a}$ is also a geodesic.
    Since $R^i\in\cd$, we have $R^i_{-a}\in\cd_{-a}$, for all $i\in [k]$. Hence, $\ca_{-a}$ is a geodesic on $\cd_{-a}$ and so $\cd_{-a}$ is directly connected.
\end{proof}

Although our primary interest lies in maximal Condorcet domains that satisfy the peak-pit condition, it is still useful to understand the structure of the peak-pit condition itself, rather than focusing solely on the maximality of the domain in terms of size. 

\begin{definition}
Let $\mathcal{D}\subseteq\mathcal{L}(A)$ be a peak-pit Condorcet domain. 
If for every Condorcet domain  $\mathcal{D}'\subseteq\mathcal{L}(A)$ with $\mathcal{D}\subseteq\mathcal{D}'$, one has $\mathcal{D}=\mathcal{D}'$, then $\cd$ is called a {\bf peak-pit maximal Condorcet domain}.
If for every peak-pit Condorcet domain $\mathcal{D}'\subseteq\mathcal{L}(A)$ with $\mathcal{D}\subseteq\mathcal{D}'$, one has $\mathcal{D}=\mathcal{D}'$, then $\cd$ is called a {\bf maximal 
peak-pit Condorcet domain}.
\end{definition}
Using the concept of connectedness, we establish that the class of maximal peak-pit Condorcet domains is equivalent to the class of peak-pit maximal Condorcet domains. Before showing this, let us first see an important property of maximal peak-pit Condorcet domains.

\begin{theorem}\label{mpp1}
    Every maximal peak-pit Condorcet domain is directly connected.
\end{theorem}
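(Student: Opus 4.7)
The plan is to apply Proposition~\ref{conex2} directly with $B = A$, and then invoke the maximality hypothesis to deduce that the resulting geodesic lies inside $\cd$. More concretely, let $\cd \subseteq \cl(A)$ be a maximal peak-pit Condorcet domain. I first dispose of the trivial cases $|A| \le 2$: when $|A|\le 1$ the only domain is a singleton, which is directly connected by definition, and when $|A|=2$ any pair of distinct linear orders $ab, ba$ is already a geodesic of length two. So from now on I assume $|A|\ge 3$.

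Fix arbitrary $R, T \in \cd$. Since $|A|\ge 3$, I can apply Proposition~\ref{conex2} with $B=A$, which yields a geodesic $\ca$ on $\cl(A)$ connecting $R_A = R$ and $T_A = T$ such that $\cd \cup K(\ca) = \cd_A \cup K(\ca)$ is a peak-pit Condorcet domain.

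Now I invoke maximality: $\cd \cup K(\ca)$ is a peak-pit Condorcet domain containing $\cd$, and $\cd$ is maximal among peak-pit Condorcet domains on $\cl(A)$, so $\cd \cup K(\ca) = \cd$, i.e., $K(\ca) \subseteq \cd$. Hence every linear order of the geodesic $\ca$ lies in $\cd$, which means $\ca$ is a geodesic on $\cd$ connecting $R$ and $T$. Therefore $R$ and $T$ are directly connected in $\cd$, and since $R, T$ were arbitrary, $\cd$ is directly connected.

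The proof is essentially a one-line consequence of Proposition~\ref{conex2} combined with the definition of maximality, so there is no substantive obstacle here; the real work is entirely deferred to the appendix proof of Proposition~\ref{conex2}. The only point requiring a bit of care is the applicability hypothesis $|B|\ge 3$ in that proposition, which is why the degenerate small-$|A|$ cases must be checked separately at the outset.
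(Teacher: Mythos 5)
Your proof is correct and follows essentially the same route as the paper: apply Proposition~\ref{conex2} with $B=A$ to obtain a geodesic $\ca$ with $\cd\cup K(\ca)$ a peak-pit Condorcet domain, then use maximality to conclude $K(\ca)\subseteq\cd$. The only difference is your explicit treatment of the degenerate cases $|A|\le 2$, which the paper leaves implicit and which does not affect the argument.
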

\begin{proof}
    Let $\cd\subseteq\cl(A)$ be a maximal peak-pit Condorcet domain. If we consider $B=A$ as in Proposition~\ref{conex2}, then for every pair of linear orders $R, T\in \cd$, there exists a geodesic $\ca$  connecting $R$ and $T$ such that $\cd\cup K(\ca)$ is a peak-pit Condorcet domain.
Since $\cd$ is a maximal peak-pit Condorcet domain, we have $K(\ca)\subseteq \cd$. Thus, $\ca$ is a geodesic on $\cd$ and so $\cd$ is directly connected.
\end{proof}

Consequently, we have the following theorem.

\begin{theorem}\label{mpp2}
    Let $\cd$ be a Condorcet domain. Then $\cd$ is a peak-pit maximal Condorcet domain if and only if $\cd$ is a maximal peak-pit Condorcet domain.
\end{theorem}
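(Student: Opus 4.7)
The forward direction is a tautology: if $\cd$ is maximal among all Condorcet domains, it is \emph{a fortiori} maximal among the subclass of peak-pit ones. The substance lies in the converse, which I would prove by contradiction, invoking Theorem~\ref{mpp1}.

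Let $\cd$ be a maximal peak-pit Condorcet domain. First I would apply Theorem~\ref{mpp1} to obtain direct connectedness of $\cd$, and then iterate Lemma~\ref{wex1.9}(a) to deduce that $\cd_B$ is connected for every $B\subseteq A$ with $|B|\geq 3$. Next, assuming for contradiction that some $R\in\cl(A)\setminus\cd$ makes $\cd':=\cd\cup\{R\}$ a Condorcet domain, the plan is to show that $\cd'$ is itself peak-pit; this contradicts the maximality of $\cd$ in the peak-pit class and forces $R\in\cd$.

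To show $\cd'$ is peak-pit I would argue triple-by-triple. Fix $\{a,b,c\}\subseteq A$: the restriction $\cd'_{\{a,b,c\}}$ is a Condorcet domain, so by Lemma~\ref{3max} it embeds in a copy of $\cd_{3,t}$, $\cd_{3,m}$, or $\cd_{3,b}$. The first and third cases hand back a never-top or never-bottom condition for free, so the only potentially bad case is embedding in $\cd_{3,m}$. I would then run a short case check to confirm that every subset of $\cd_{3,m}$ of size at most three already satisfies some peak-pit never-condition (e.g.\ each three-element subset of $\{abc,bca,acb,cba\}$ either omits some alternative from the top or from the bottom). Consequently the sole remaining obstruction is $\cd'_{\{a,b,c\}}$ coinciding with an isomorphic copy of the full $\cd_{3,m}$.

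To close the argument I would eliminate this residual case via the connectedness of $\cd_{\{a,b,c\}}$ just established. Since $\cd$ is peak-pit, $\cd_{\{a,b,c\}}$ cannot equal $\cd_{3,m}$, and therefore must be a three-element subset of a copy of $\cd_{3,m}$. Inspecting the six-cycle of alike orders in Figure~\ref{g3paths}, the four orders of $\cd_{3,m}$ form two disjoint alike-edges; hence every three-element subset contains an isolated vertex and is disconnected. This contradicts the connectedness of $\cd_{\{a,b,c\}}$ coming from Lemma~\ref{wex1.9}, finishing the argument. The main obstacle I anticipate is the small-case enumeration pinning down $\cd_{3,m}$ as the \emph{only} non-peak-pit three-alternative Condorcet configuration; once that is in place, the disconnection of its three-element subsets, together with Theorem~\ref{mpp1}, does the rest.
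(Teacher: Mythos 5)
Your proposal is correct and takes essentially the same route as the paper's proof: the forward direction is definitional, and for the converse you combine Theorem~\ref{mpp1} with Lemma~\ref{wex1.9} and the observation that a non-peak-pit one-order extension forces some triple restriction to be the full four-element never-middle domain, whose three-element subsets are disconnected, contradicting connectedness of $\cd_{\{a,b,c\}}$. The only cosmetic difference is that you use Lemma~\ref{wex1.9}\ref{wex1.9a} (connectedness) where the paper invokes part \ref{wex1.9b} (direct connectedness); either suffices.
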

\begin{proof}
Suppose $\cd$ is a peak-pit maximal Condorcet domain.
Since every peak-pit Condorcet domain is a Condorcet domain, $\cd$ is a maximal peak-pit Condorcet domain by definition.

Conversely, suppose $\cd\subseteq\cl(A)$ is a maximal peak-pit Condorcet domain. 
By Theorem~\ref{mpp1}, the domain $\cd$ is directly connected.
Now we will show that $\cd$ is a maximal Condorcet domain. Suppose not.
Then there is a linear order $R\in\cl(A)$ such that $\cd\cup\{R\}$ is a Condorcet domain but not a peak-pit Condorcet domain.
Hence, there is a triple $a,b,c\in A$ such that $(\cd\cup\{R\})_{\{a,b,c\}}$ only satisfies a never-middle condition. Without loss of generality,  
suppose $(\cd\cup\{R\})_{\{a,b,c\}}$  satisfies the never-middle condition $cN_{\{a,b,c\}} 2$, namely, $(\cd\cup\{R\})_{\{a,b,c\}}\subseteq\{abc,bac,cab, cba\}$.
Because $(\cd\cup\{R\})_{\{a,b,c\}}$ does not satisfy any never-top or never-bottom condition and any proper subset of $\{abc,bac,cab, cba\}$ satisfies a never-top or never-bottom condition, we have $(\cd\cup\{R\})_{\{a,b,c\}}=\{abc,bac,cab, cba\}$, as shown in Figure~\ref{g3paths2}.
Moreover, since $\cd$ is a peak-pit Condorcet domain, we have $R_{\{a,b,c\}}\in\{abc,bac,cab, cba\}$.
Hence, $\cd_{\{a,b,c\}}=\{abc,bac,cab, cba\}\setminus\{R_{\{a,b,c\}}\}$.
Note that removing any one element from $\{abc,bac,cab,cba\}$ will cause the set of remaining elements to be disconnected.
In particular, if $R_{\{a,b,c\}}$ is $abc$ or $cab$, then $bac$ and $cba$ is not directly connected in $\cd_{\{a,b,c\}}$.
Similarly, if $R_{\{a,b,c\}}$ is $bac$ or $cba$, then $abc$ and $cab$ is not directly connected in $\cd_{\{a,b,c\}}$. Since $\cd$ is directly connected, that contradicts Lemma~\ref{wex1.9}\ref{wex1.9b}.
Hence, $\cd$ is a maximal Condorcet domain.
\end{proof}

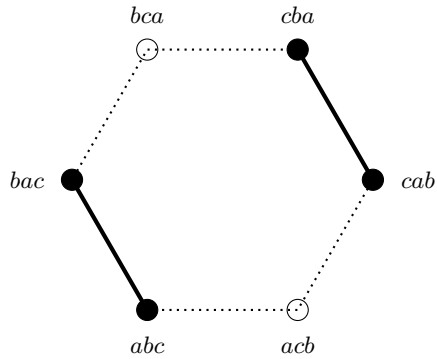
\begin{figure}[h]
 \centering
\begin{tikzpicture}[scale=2]

\draw[fill=black] (-1,0) circle (2pt);
\draw[fill=black] (0.5,0.866) circle (2pt);
\draw[fill=black] (-0.5,-0.866) circle (2pt);
\draw[fill=black] (1,0) circle (2pt);
\draw[fill=white] (-0.5,0.866) circle (2pt);
\draw[fill=white] (0.5,-0.866) circle (2pt);
 
\node at (-1.3,0) {$bac$};
\node at (-0.5,1.1) {$bca$};
\node at (0.5,1.1) {$cba$};
\node at (1.3,0) {$cab$};
\node at (-0.5,-1.1) {$abc$};
\node at (0.5,-1.1) {$acb$};
 

\draw[ultra thick] (1,0) -- (0.5,0.866);
\draw[ultra thick] (-1,0)--(-0.5,-0.866);
\draw[thick, dotted] (-0.5,-0.866) --(0.5,-0.866)--(1,0);
\draw[thick, dotted] (0.5,0.866)-- (-0.5,0.866)--(-1,0);

\end{tikzpicture}
    \caption{A never-middle domain that is not connected in $\cl(\{a,b,c\})$.} \label{g3paths2}
\end{figure} 

Next we will see that, in the class of maximal Condorcet domains, peak-pittedness, direct connectedness, and connectedness are actually equivalent.

\begin{theorem}\label{wex2}
    Let $\cd$ be a maximal Condorcet domain. Then the following are equivalent: 
    \begin{description}
            \item[(i) ] $\cd$ is a peak-pit Condorcet domain;
            \item[(ii) ] $\cd$ is directly connected;
            \item[(iii) ] $\cd$ is connected.
            \end{description}
\end{theorem}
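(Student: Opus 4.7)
The plan is to prove the cycle of implications $\text{(i)} \Rightarrow \text{(ii)} \Rightarrow \text{(iii)} \Rightarrow \text{(i)}$. The implication $\text{(ii)} \Rightarrow \text{(iii)}$ is immediate, since every geodesic on $\cd$ is in particular a path on $\cd$, as already observed in the paper.

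For $\text{(i)} \Rightarrow \text{(ii)}$, I will invoke Proposition~\ref{conex2} with $B = A$: given any $R, T \in \cd$, it produces a geodesic $\ca$ connecting $R$ and $T$ such that $\cd \cup K(\ca)$ is a peak-pit Condorcet domain, and hence in particular a Condorcet domain. Maximality of $\cd$ as a Condorcet domain, combined with $\cd \subseteq \cd \cup K(\ca)$, then forces $K(\ca) \subseteq \cd$, so $\ca$ witnesses direct connectedness of $R$ and $T$ within $\cd$.

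The main work lies in $\text{(iii)} \Rightarrow \text{(i)}$, which I will prove by contrapositive. Assume $\cd$ is not peak-pit: some triple $\{a,b,c\} \subseteq A$ has $\cd_{\{a,b,c\}}$ satisfying no peak-pit condition. As a Condorcet domain on three alternatives it must then satisfy a never-middle condition, say $cN_{\{a,b,c\}}2$, so $\cd_{\{a,b,c\}} \subseteq \{abc, bac, cab, cba\}$. A short case analysis over the four three-element subsets obtained by deleting one order (exactly parallel to the argument used in the proof of Theorem~\ref{mpp2}) shows that every proper subset of $\{abc, bac, cab, cba\}$ satisfies some peak-pit condition, which forces $\cd_{\{a,b,c\}} = \{abc, bac, cab, cba\}$. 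But as Figure~\ref{g3paths2} shows, this four-element set is disconnected in $\cl(\{a,b,c\})$: its only alike-pairs are $\{abc, bac\}$ and $\{cab, cba\}$, so $abc$ and $cab$ cannot be connected within it. On the other hand, iterated application of Lemma~\ref{wex1.9}\ref{wex1.9a} to the elements of $A \setminus \{a,b,c\}$ (vacuously when $|A| = 3$, since then $\cd = \cd_{\{a,b,c\}}$) transfers connectedness of $\cd$ to $\cd_{\{a,b,c\}}$, producing the desired contradiction.

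The main obstacle is the restriction step in $\text{(iii)} \Rightarrow \text{(i)}$: one has to extract a triple-level obstruction from the global failure of peak-pittedness, and the argument relies crucially on the rigidity of the never-middle four-element domain, namely that no strictly smaller Condorcet domain on three alternatives can fail every peak-pit condition. Once this structural fact and Lemma~\ref{wex1.9}\ref{wex1.9a} are in hand, the contradiction with connectedness is immediate.
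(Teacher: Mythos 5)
Your proposal is correct and follows essentially the same route as the paper: (ii) $\Rightarrow$ (iii) is trivial, (iii) $\Rightarrow$ (i) is exactly the paper's argument via the forced never-middle four-element restriction $\{abc,bac,cab,cba\}$, its disconnectedness, and Lemma~\ref{wex1.9}\ref{wex1.9a}, and (i) $\Rightarrow$ (ii) rests on Proposition~\ref{conex2} with $B=A$. Your only deviation is harmless and slightly more economical: for (i) $\Rightarrow$ (ii) you combine Proposition~\ref{conex2} directly with maximality of $\cd$ as a Condorcet domain (so $\cd\cup K(\ca)$ being a Condorcet domain already forces $K(\ca)\subseteq\cd$), whereas the paper detours through Theorems~\ref{mpp2} and~\ref{mpp1}.
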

\begin{proof}

{\bf \text{(i)} $\Rightarrow$ \text{(ii)}:}
Suppose $\cd\subseteq\cl(A)$ is a peak-pit maximal Condorcet domain. 
By Theorem~\ref{mpp2}, $\cd$ is a maximal peak-pit Condorcet domain.  Hence by Theorem~\ref{mpp1}, it is directly connected.

{\bf \text{(ii)} $\Rightarrow$ \text{(iii)}:}
Suppose $\cd$ is directly connected, then every two linear orders in it are connected by a geodesic on $\cd$. Since all geodesics are paths, $\cd$ is connected.

{\bf \text{(iii)} $\Rightarrow$ \text{(i)}:}
    Suppose $\cd$ is a connected maximal Condorcet domain. Suppose $\cd$ is not a peak-pit Condorcet domain for a contradiction. Then there is a triple of distinct alternatives $a,b,c$ such that $\cd_{\{a,b,c\}}$ does not satisfy a never-top or never-bottom condition. Since $\cd$ is a Condorcet domain, $\cd_{\{a,b,c\}}$  satisfies a never-middle condition. Without loss of generality, suppose $\cd_{\{a,b,c\}}$  satisfies the never-middle condition $cN_{\{a,b,c\}} 2$, namely, $\cd_{\{a,b,c\}}\subseteq\{abc,bac,cab, cba\}$.
However, since $\cd_{\{a,b,c\}}$ does not satisfy any never-top or never-bottom condition and any proper subset of $\{abc,bac,cab, cba\}$ would satisfy a never-top or never-bottom condition, we have $\cd_{\{a,b,c\}}=\{abc,bac,cab, cba\}$, as shown in Figure~\ref{g3paths2}. Note that $\cd_{\{a,b,c\}}$ is not connected, because no path on $\cd_{\{a,b,c\}}$ connects $abc$ and $cba$.
However, since $\cd$ is connected, $\cd_{\{a,b,c\}}$ is also connected by Lemma~\ref{wex1.9}\ref{wex1.9a}. This is a contradiction and so $\cd$ is a peak-pit Condorcet domain. 
\end{proof}
    






\section{Conclusion}
We have established the equivalence of three fundamental classes of maximal Condorcet domains: the class of connected Condorcet domains, the class of peak-pit Condorcet domains, and the class of directly connected Condorcet domains. By examining the structure of geodesics of alike linear orders, we resolved a long-standing open question and unified these previously distinct perspectives under the maximality condition. Our results not only advance the theoretical understanding of Condorcet domains but also clarify the relationship between maximal peak-pit Condorcet domains and peak-pit maximal Condorcet domains, providing formal definitions to resolve ambiguity in the literature.

With these new findings, the work of \cite{lig.2023} can be carried forward. They introduced combinatorial tools, such as weakly separated ideals and generalised arrangements of pseudolines, to explore the combinatorial structure of peak-pit maximal Condorcet domains. Our results provide a solid foundation for future research, opening new avenues to investigate the deeper combinatorial properties of these domains.

\begin{credits}
\subsubsection{\ackname} 
I thank Tom ter Elst and Dominik Peters for their thorough review and invaluable feedback on the manuscript. I also thank the anonymous reviewers for their insightful comments.
I gratefully acknowledge financial support from the Department of Mathematics at the University of Auckland.

\subsubsection{\discintname}
The author has no competing interests to declare that are
relevant to the content of this article. 
\end{credits}

\section*{\Large Appendix: Proof of Proposition \ref{conex2}}
The idea behind proving Proposition~\ref{conex2} is as follows: Given any two linear orders in a peak-pit Condorcet domain, we can construct a geodesic connecting these linear orders such that the peak-pit condition arising from this geodesic is compatible with the ones satisfied by the domain containing these two linear orders.

To achieve this construction, we need to examine lemmas from different perspectives, including lemmas related to maximal Condorcet domains and lemmas related to connected Condorcet domains, which will be presented through geodesics. The latter case involves further examination of the equivalence class of a geodesic.
Using these lemmas, we can enrich peak-pit Condorcet domains in terms of their progress toward maximality. Additionally, we investigate a special property called swap closure of equivalent geodesics, which eliminates some of the obstacles and enables the proof to proceed.
Finally, we will list some technical lemmas related to peak-pit conditions before proceeding with the proof of Proposition~\ref{conex2}.

\setcounter{section}{0}
\section{Lemmas for Condorcet domains}
We first look at some special cases of a peak-pit Condorcet domain, which will be useful for our later analysis.

\begin{lemma}\label{wex0} Let $\cd\subseteq \cl(A)$ be a peak-pit Condorcet domain and let $a,b,c\in A$ be distinct alternatives. Suppose $abc,cba\in\cd_{\{a,b,c\}}$. Then
\begin{tabel}
    \item\label{wex0a} $\emptyset\ne N_p(\cd_{\{a,b,c\}})\subseteq \{bN_{\{a,b,c\}}1$, $bN_{\{a,b,c\}}3\}$.  
    \item\label{wex0b}  $N_p(\cd_{\{a,b,c\}})= \{bN_{\{a,b,c\}}1$, $bN_{\{a,b,c\}}3\}$ if and only if $\cd_{\{a,b,c\}}=\{abc,cba\}$. 
\end{tabel}
\end{lemma}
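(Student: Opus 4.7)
The plan is to handle the two parts separately, relying only on the definitions of peak-pit conditions and direct inspection of which alternative sits in which position of $abc$ and $cba$.

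For part \ref{wex0a}, since $\cd$ is a peak-pit Condorcet domain, the restricted domain $\cd_{\{a,b,c\}}$ satisfies at least one never-top or never-bottom condition, so $N_p(\cd_{\{a,b,c\}})\neq\emptyset$. The main step is then to rule out five of the six potential peak-pit conditions by using the presence of $abc$ and $cba$. Since $abc$ places $a$ in the first position and $c$ in the third, the never-conditions $aN_{\{a,b,c\}}1$ and $cN_{\{a,b,c\}}3$ are both violated. Similarly, since $cba$ places $c$ in the first position and $a$ in the third, the never-conditions $cN_{\{a,b,c\}}1$ and $aN_{\{a,b,c\}}3$ are both violated. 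Hence the only peak-pit conditions that can possibly be satisfied are $bN_{\{a,b,c\}}1$ and $bN_{\{a,b,c\}}3$, giving $N_p(\cd_{\{a,b,c\}})\subseteq\{bN_{\{a,b,c\}}1,bN_{\{a,b,c\}}3\}$.

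For part \ref{wex0b}, I will argue both directions. If $N_p(\cd_{\{a,b,c\}})=\{bN_{\{a,b,c\}}1,bN_{\{a,b,c\}}3\}$, then $\cd_{\{a,b,c\}}$ forbids $b$ from appearing either first or last; thus every order in $\cd_{\{a,b,c\}}$ must have $b$ in the middle position. The only such linear orders on $\{a,b,c\}$ are $abc$ and $cba$, which are both assumed to belong to $\cd_{\{a,b,c\}}$, so $\cd_{\{a,b,c\}}=\{abc,cba\}$. Conversely, if $\cd_{\{a,b,c\}}=\{abc,cba\}$, then $b$ occupies the middle position in every order of $\cd_{\{a,b,c\}}$, so both $bN_{\{a,b,c\}}1$ and $bN_{\{a,b,c\}}3$ hold; combined with the inclusion from \ref{wex0a}, this gives equality $N_p(\cd_{\{a,b,c\}})=\{bN_{\{a,b,c\}}1,bN_{\{a,b,c\}}3\}$.

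There is no serious obstacle here: the proof is a finite case analysis over the three alternatives and the two positions (first and third) relevant to peak-pit conditions, so the whole argument reduces to reading off which alternative sits in which position of $abc$ and $cba$.
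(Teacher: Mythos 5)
Your proposal is correct and follows essentially the same argument as the paper: part \ref{wex0a} excludes the four peak-pit conditions violated by $abc$ and $cba$ (leaving only the two involving $b$), and part \ref{wex0b} uses that both conditions force $b$ into the middle position, with the converse appealing to the inclusion from \ref{wex0a}. No gaps.
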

\begin{proof}
    (a) Since $\cd$ is a peak-pit Condorcet domain, there are six possible peak-pit conditions for $\cd_{\{a,b,c\}}$ of which four are excluded by the conditions $abc,cba\in\cd_{\{a,b,c\}}$.

    (b) Suppose $\cd_{\{a,b,c\}}$ satisfies both $bN_{\{a,b,c\}}1$ and $bN_{\{a,b,c\}}3$. Now if $R\in\cd_{\{a,b,c\}}$, then alternatives $b$ can never appear first or last in $R$. Hence, $R$ can only be $abc$ or $cba$. That is, $\cd_{\{a,b,c\}}\subseteq\{abc,cba\}$. Since both $abc$ and $cba$ are in $\cd_{\{a,b,c\}}$, the equality holds.
    On the other hand, if $\cd_{\{a,b,c\}}=\{abc,cba\}$, then $\cd_{\{a,b,c\}}$ satisfies both $bN_{\{a,b,c\}}1$ and $bN_{\{a,b,c\}}3$. Hence, the only peak-pit-conditions that $\cd_{\{a,b,c\}}$ satisfies are these two conditions by Statement~\ref{wex0a}.
\end{proof}

\section{Lemmas for paths of alike linear orders}

We investigate several useful properties of paths of alike linear orders.

\begin{lemma}\label{wex1.1}
\begin{tabel} 
\item\label{wex1.1a} Any switching pair of alternatives in a geodesic $\ca$ occurs exactly once in $\ca$.
\item\label{wex1.1b} Let $\ca=(R_1,\ldots, R_k)$ be a path of alike linear orders for which any switching pair of alternatives occurs exactly once. Then $\ca$ is a geodesic connecting $R_1$ and $R_k$.
\item\label{wex1.1c} Let $\ca=(R_1,\ldots, R_k)$ be a geodesic of alike linear orders on $\cl(A)$ and $B\subseteq A$. Then $\ca_B$ is also a geodesic on $\cl(B)$.
\item\label{wex1.1d} If $\ca=(R_1,\ldots, R_i)$ and $\cb=(T_1,\ldots, T_j)$ are geodesics such that $R_i=T_1$ and the switching pairs of alternatives in $\ca$ and $\cb$ are pairwise distinct, then $\cc=(R_1,\ldots, R_i, T_2,\ldots, T_j)$ is a geodesic connecting $R_1$ and $T_j$.
\end{tabel}
\end{lemma}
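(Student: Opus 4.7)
The plan is to anchor all four parts on one simple observation: each adjacent swap in a path of alike linear orders toggles the relative order of exactly one unordered pair of alternatives. Combined with the identity (noted in the paper just after the definition of a geodesic) that a geodesic connecting $R$ and $T$ has length equal to $1$ plus the Kendall tau distance $d(R,T)$, this will give parts (a) and (b) directly, and then (c) and (d) follow by routine bookkeeping.

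First I would prove a parity lemma by induction on $k$: for any path of alike linear orders $\ca=(R_1,\dots,R_k)$ and any unordered pair $\{x,y\}\subseteq A$, the number of times $\{x,y\}$ appears as a switching pair of $\ca$ has the same parity as the indicator of whether $R_1$ and $R_k$ rank $x$ and $y$ differently. Consequently every path has at least $d(R_1,R_k)$ switching pairs (counted with multiplicity), since each disagreeing pair must appear at least once and each agreeing pair an even number of times. For (a), a geodesic has exactly $k-1 = d(R_1,R_k)$ switching pairs, so equality is forced in this inequality; hence each disagreeing pair occurs exactly once and each agreeing pair zero times, which yields the claimed uniqueness. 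For (b), if every switching pair of $\ca$ occurs exactly once, then by parity the pairs actually swapped must be exactly the disagreeing pairs of $R_1$ and $R_k$ (an agreeing pair, needing even multiplicity, must be swapped zero times), so the number of switching pairs equals $d(R_1,R_k)$ and $\ca$ has length $1+d(R_1,R_k)$, making it a geodesic.

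For (c), by (a) each switching pair of $\ca$ occurs exactly once in $\ca$. A consecutive pair $(R_i)_B, (R_{i+1})_B$ either coincides (when the switching pair of $\ca$ at that step contains an alternative outside $B$) or differs by a single swap of adjacent alternatives in $B$; after collapsing the duplicates, $\ca_B$ is a path of alike linear orders whose switching pairs are precisely those of $\ca$ lying in $B$, each occurring exactly once, hence a geodesic by (b). For (d), the concatenation $\cc$ is a path because $R_i=T_1$, and each switching pair of $\cc$ occurs exactly once: the pairwise-distinctness hypothesis rules out a pair appearing in both halves, while (a) applied to each half separately rules out repetitions within a half; so $\cc$ is a geodesic by (b).

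I expect the only real obstacle to be stating and verifying the parity lemma cleanly at the start; once it is in hand, (a) and (b) drop out simultaneously from the same counting argument, and the restriction and concatenation statements (c) and (d) are immediate applications of (b) to the appropriate derived path.
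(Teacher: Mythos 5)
Your proposal is correct and follows essentially the same route as the paper: count switching pairs against the Kendall tau distance to get (a) and (b), then obtain (c) and (d) by applying (b) to the restricted and concatenated paths, whose switching pairs each occur exactly once. Your explicit parity lemma simply spells out the counting step that the paper states more tersely (that the switching pairs of a geodesic are precisely the pairs ranked differently by its endpoints, each occurring once), so the substance of the argument is the same.
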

\begin{proof}
    (a) Let $\ca=(R_1,\ldots, R_k)$ be a geodesic connecting $R$ and $T$. The switching pairs of alternatives that occur in $\ca$ are precisely the pairs of alternatives that are ranked differently by $R$ and $T$. Each of the latter pairs occurs at most once, and so any switching pair of alternatives occurs at most once. Since switching pairs of alternatives in $\ca$ are defined over a pair of alike linear orders in $\ca$. If $(a,b)$ is a switching pair in $\ca$, then it must occur at least once. Hence, every switching pair in $\ca$ occurs exactly once.

    (b) Let $\ca=(R_1,\ldots, R_k)$ be a path of alike linear orders for which any switching pair of alternatives occurs exactly once. Then all switching pairs of alternatives in $\ca$ are pairwise distinct, and so there are $k-1$ pairs of alternatives that are ranked differently by $R_1$ and $R_k$. Hence, the length of $\ca$ is one more than such number and so the length of $\ca$ is minimal for a path connecting $R_1$ and $R_k$. Thus, $\ca$ is a geodesic connecting $R_1$ and $R_k$.

    (c)  Let $\ca=(R_1,\ldots, R_k)$ be a geodesic of alike linear orders on $\cl(A)$ and $B\subseteq A$. Then any switching pair of alternatives in $\ca$ occurs exactly once in $\ca$. Hence, any switching pair of alternatives in $\ca_B$ also occurs exactly once in $\ca_B$, and so $\ca_B$ is also a geodesic due to Statement~\ref{wex1.1b}.
    
    (d) Suppose $\ca=(R_1,\ldots, R_i)$ and $\cb=(T_1,\ldots, T_j)$ are geodesics such that the switching pairs of alternatives in them are pairwise distinct. Then any switching pair of alternatives in $\cc=(R_1,\ldots, R_i, T_2,\ldots, T_j)$ occurs exactly once. Hence, the proof follows immediately from Statements~\ref{wex1.1a} and \ref{wex1.1b}.
   \end{proof} 

On a triple of alternatives, we have observed a dichotomy in Lemma~\ref{2geodesics} when a geodesic includes three switching pairs. 
However, when we are only aware of two of the switching pairs from a geodesic, there will be six distinct cases to consider. 
If we know the ordering of two switching pairs in a geodesic on a triple of alternatives, we can also determine in four cases that the third switching pair must exist in the geodesic and in which order the three occur. In the two remaining cases the third switching pair may or may not exist and if it exists, then it must be in a certain order.

\begin{lemma}\label{2geodesics3}
    Let $a,b,c$ be distinct alternatives. Let $T\in\cl(\{a,b,c\})$ and let $\ca$ be a geodesic on $\cl(\{a,b,c\})$ connecting $abc$ and $T$. Then
    \begin{tabel}
    \item\label{2geodesics3a} 
    If $(a,b)$ and $(a,c)$ are switching pairs in $S(\ca)$ such that $(a,b)\vartriangleleft (a,c)$,
    then $(a,b)$ appears first in $S(\ca)$.
    Moreover, if $(b,c)$ is a switching pair in $S(\ca)$, then $(a,b)\vartriangleleft (a,c)\vartriangleleft(b,c)$ and $T=cba$.
\item\label{2geodesics3b}
        If $(a,b)$ and $(a,c)$ are switching pairs in $S(\ca)$ such that $(a,c)\vartriangleleft (a,b)$,
    then $(b,c)$ is a switching pair is in $S(\ca)$.
    Moreover, $S(\ca)$ is $(b,c)\vartriangleleft(a,c)\vartriangleleft(a,b)$ and $T=cba$. 
            \item\label{2geodesics3c} If $(a,b)$ and $(b,c)$ are switching pairs in $S(\ca)$ such that
        $(a,b)\vartriangleleft (b,c)$, then $(a,c)$ is a switching pair in $S(\ca)$. Moreover,        
        $S(\ca)$ is $(a,b) \vartriangleleft(a,c)\vartriangleleft(b,c)$ and $T=cba$.
        \item\label{2geodesics3d}
        If $(a,b)$ and $(b,c)$ are switching pairs in $S(\ca)$ such that $(b,c)\vartriangleleft(a,b)$, then $(a,c)$ is a switching pair in $S(\ca)$. Moreover,  $S(\ca)$ is $(b,c)\vartriangleleft(a,c)\vartriangleleft(a,b)$ and $T=cba$.
            \item\label{2geodesics3e}
        If $(a,c)$ and $(b,c)$ are switching pairs in $S(\ca)$ such that $(a,c)\vartriangleleft (b,c)$,
    then $(a,b)$ is a switching pair is in $S(\ca)$.
    Moreover, $S(\ca)$ is $(a,b)\vartriangleleft(a,c)\vartriangleleft(b,c)$ and $T=cba$.
            \item\label{2geodesics3f}
        If $(a,c)$ and $(b,c)$ are switching pairs in $S(\ca)$ such that $(b,c)\vartriangleleft (a,c)$,
    then $(b,c)$ appears first in $S(\ca)$.
    Moreover, if $(a,b)$ is a switching pair in $S(\ca)$, then $(b,c)\vartriangleleft(a,c)\vartriangleleft(a,b)$ and $T=cba$.
    \end{tabel}
\end{lemma}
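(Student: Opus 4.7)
The plan is to reduce each of the six parts to inspection of the very small set of geodesics in $\cl(\{a,b,c\})$, using the results already established. By Lemma~\ref{wex1.1}\ref{wex1.1a}, every switching pair of $\ca$ occurs exactly once in $\ca$; consequently, the set of switching pairs appearing in $S(\ca)$ coincides with the set of pairs of alternatives ranked differently by $abc$ and $T$. Hence, whenever the hypothesis of a part identifies two particular switching pairs of $\ca$, this already pins down the pairs of alternatives flipped between $abc$ and $T$ into a short list of length one or two, and so constrains $T$.

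For each admissible $T$, the next step is to read off all possible geodesics from $abc$ to $T$: when $T\neq cba$ there is a unique such geodesic, which can be read from Figure~\ref{g3paths}, and when $T=cba$, Lemma~\ref{2geodesics}\ref{2geodesicsa}--\ref{2geodesicsb} furnishes exactly the two geodesics $(abc,bac,bca,cba)$ and $(abc,acb,cab,cba)$, with sequences $(a,b),(a,c),(b,c)$ and $(b,c),(a,c),(a,b)$ respectively. Comparing each candidate's sequence of switching pairs with the hypothesised $\vartriangleleft$-ordering immediately eliminates the incompatible geodesics, and the conclusion can be read off directly.

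For example, in part (a) the pairs $(a,b)$ and $(a,c)$ being switching pairs forces both $b$ and $c$ to precede $a$ in $T$, so $T\in\{bca,cba\}$. If $T=bca$, the unique geodesic $(abc,bac,bca)$ has $(a,b)$ first and does not contain $(b,c)$. If $T=cba$, the hypothesis $(a,b)\vartriangleleft(a,c)$ singles out the never-bottom geodesic with sequence $(a,b)\vartriangleleft(a,c)\vartriangleleft(b,c)$, in which $(a,b)$ is again first; moreover, the presence of $(b,c)$ in $S(\ca)$ forces this sub-case. These two observations together yield the conclusion of (a). Cases (b), (d) and (e) each reduce to a single geodesic to $cba$ because the hypothesised ordering is compatible with only one of the two sequences produced by Lemma~\ref{2geodesics}; case (c) likewise reduces to the never-bottom geodesic to $cba$; and (f) is parallel to (a) with $(a,b)$ and $(b,c)$ swapped.

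There is no genuine mathematical obstacle — the proof is essentially a finite enumeration on a six-element permutation set. The only bookkeeping subtlety arises in (a) and (f), where two endpoints $T$ survive the initial narrowing step and must be handled together; this is precisely what accounts for the slightly more conditional conclusion in those parts compared to (b)--(e).
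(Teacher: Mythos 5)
Your proposal is correct, and it proves all six parts. The route differs in organisation from the paper's: you first pin down the endpoint $T$ from the hypothesised switching pairs (using Lemma~\ref{wex1.1}\ref{wex1.1a} to identify the switching pairs of a geodesic with the pairs ranked differently by $abc$ and $T$), and then enumerate all geodesics from $abc$ to each surviving $T$ --- unique when the Kendall tau distance is at most two, and exactly the two sequences $(a,b),(a,c),(b,c)$ and $(b,c),(a,c),(a,b)$ from Lemma~\ref{2geodesics} when $T=cba$ --- discarding the ones incompatible with the given $\vartriangleleft$-ordering. The paper instead argues locally along the geodesic: it observes that the first swap can only involve a pair adjacent in the current order (e.g.\ in \ref{2geodesics3b} and \ref{2geodesics3e}, that $(a,c)$ cannot be swapped before one of $(a,b)$, $(b,c)$; in \ref{2geodesics3c} and \ref{2geodesics3d}, that after the first swap the remaining pair is no longer adjacent), invoking the dichotomy of Lemma~\ref{2geodesics} only for parts \ref{2geodesics3a} and \ref{2geodesics3f}. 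Your endpoint-plus-enumeration argument is more uniform and arguably cleaner for this three-alternative lemma, and it correctly isolates the two conditional cases (a) and (f) where two endpoints survive; the paper's adjacency-style reasoning has the advantage that the same local technique is reused verbatim in the four-alternative Lemma~\ref{2geodesics4} and in the later case analyses, where a full enumeration of geodesics would no longer be practical. Note also that your appeal to the uniqueness of the geodesic when $T\ne cba$ is read off Figure~\ref{g3paths} rather than from Lemma~\ref{wex1.2}\ref{wex1.2b} (which appears later), so no circularity arises.
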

\begin{proof}
    Note that $\ca$ is a geodesic connecting $abc$ and $T$. Since there are only two pairs of alternatives that are adjacent in $abc$,
    the first switching pair in $S(\ca)$ can only be either $(a,b)$ or $(b,c)$.

(a) Suppose $(b,c)$ does not occur in $S(\ca)$.
Then $\ca=(abc,bac,bca)$ is possible with $(a,b)\vartriangleleft(a,c)$ and the switching pair $(a,b)$ appears first in $S(\ca)$.
        Suppose $(b,c)$ occurs in $S(\ca)$.
        Then $S(\ca)$ contains three switching pairs. Then Lemma~\ref{2geodesics} (III) provides a dichotomy.
        Since $(a,b)\vartriangleleft(a,c)$, we have a never-bottom geodesic $(a,b)\vartriangleleft(a,c)\vartriangleleft(b,c)$. In particular, $(a,b)$ appears first in $S(\ca)$.

    (b) Since $(a,c)$ appears in $S(\cg)$ and alternatives $a$ and $c$ are not adjacent in $abc$, we need to either first swap $a$ and $b$, or first swap $b$ and $c$, before we can swap $a$ and $c$.
    Now $(a,c)\vartriangleleft (a,b)$ by assumption. Hence, $(b,c)$ is in $S(\ca)$ and $(b,c)\vartriangleleft(a,c)$ in $S(\ca)$.
    Together, we have $(b,c)\vartriangleleft(a,c)\vartriangleleft(a,b)$ and $T=cba$.
    
    (c) Suppose $(a,b)\vartriangleleft (b,c)$ in $S(\ca)$. Then $(a,b)$ is the first switching pair in $S(\ca)$. The linear order immediately after the swap $(a,b)$ in $\ca$ is $bac$.
    Now since $(b,c)$ is a switching pair in $S(\ca)$ and the alternatives $b$ and $c$ are not adjacent in $bac$, we need to swap $a$ and $c$ before we can swap $b$ and $c$.
    Hence, $(a,c)$ is a switching pair in $S(\ca)$ and $(a,b) \vartriangleleft(a,c)\vartriangleleft(b,c)$ in $S(\ca)$. Then $T=cba$

    (d) Suppose $(b,c)\vartriangleleft (a,b)$ in $S(\ca)$. Then $(b,c)$ is the first switching pair in $S(\ca)$. The linear order immediately after the swap $(b,c)$ in $\ca$ is $acb$.
    Now since $(a,b)$ is a switching pair in $S(\ca)$ and the alternatives $a$ and $b$ are not adjacent in $acb$, we need to swap $a$ and $c$ before we can swap $a$ and $b$. Then the statement follows as \ref{2geodesics3c}.
 
(e) Since $(a,c)$ appears in $S(\cg)$ and the alternatives $a$ and $c$ are not adjacent in $abc$, we need to either first swap $a$ and $b$, or first swap $b$ and $c$, before we can swap $a$ and $c$.
    Now $(a,c)\vartriangleleft (b,c)$ by assumption. Hence, $(a,b)$ is in $S(\ca)$ and $(a,b)\vartriangleleft(a,c)$ in $S(\ca)$.
    Together, we have $(a,b)\vartriangleleft(a,c)\vartriangleleft(b,c)$ and $T=cba$.

  (f) Suppose $(a,b)$ does not occur in $S(\ca)$.
Then $\ca=(abc,acb,cab)$ is possible with $(b,c)\vartriangleleft(a,c)$ and the switching pair $(b,c)$ appears first in $S(\ca)$.
        Suppose $(a,b)$ occurs in $S(\ca)$.
        Then $S(\ca)$ contains three switching pairs. Then Lemma~\ref{2geodesics} (III) provides a dichotomy.
        Since $(b,c)\vartriangleleft(a,c)$, we have a never-top geodesic $(b,c)\vartriangleleft(a,c)\vartriangleleft(a,b)$. In particular, $(b,c)$ appears first in $S(\ca)$.
\end{proof}

Now we investigate a special case for a geodesic on a quadruple of alternatives.

\begin{lemma}\label{2geodesics4}
    Let $a,b,c,d$ be distinct alternatives. Let $T\in\cl(\{a,b,c,d\})$ and let $\ca$ be a geodesic on $\cl(\{a,b,c,d\})$ connecting $R=abcd$ and $T$. Then 
    \begin{tabel}
        \item\label{2geodesics4a} If $(a,c)\vartriangleleft (b,c)\vartriangleleft (b,d)$ in $S(\ca)$, then $(a,b)$ and $(a,d)$ are in $S(\ca)$, and         
        $(a,b) \vartriangleleft(a,c)\vartriangleleft (a,d)\vartriangleleft(b,d)$ in $S(\ca)$.
        \item\label{2geodesics4b}
        If $(b,d)\vartriangleleft(a,b)\vartriangleleft (a,c)$ in $S(\ca)$, then $(c,d)$ and $(a,d)$ are in $S(\ca)$, and $(c,d)\vartriangleleft(b,d)\vartriangleleft(a,d)\vartriangleleft (a,c)$ in $S(\ca)$.
    \end{tabel}
\end{lemma}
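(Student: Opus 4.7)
The plan is to prove both statements by restricting the geodesic $\ca$ to various triples of alternatives and applying Lemma~\ref{2geodesics3}; since by Lemma~\ref{wex1.1}\ref{wex1.1c} every triple-restriction of a geodesic is again a geodesic, and since $\ca$ begins at $abcd$, each triple-restriction begins at the corresponding three-letter prefix ($abc$, $abd$, $acd$, or $bcd$), which is exactly the form required by Lemma~\ref{2geodesics3}. One key step will also invoke Lemma~\ref{2geodesics} on a triple where all three possible switching pairs are known to occur, together with the dichotomy between the never-bottom and never-top geodesic.

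For Part~\ref{2geodesics4a}, I would first restrict $\ca$ to $\{a,b,c\}$: from $(a,c)\vartriangleleft(b,c)$, Lemma~\ref{2geodesics3}\ref{2geodesics3e} produces $(a,b)$ as a switching pair with $(a,b)\vartriangleleft(a,c)\vartriangleleft(b,c)$. Combining with the hypothesis yields $(a,b)\vartriangleleft(b,d)$, and restricting to $\{a,b,d\}$ allows me to apply Lemma~\ref{2geodesics3}\ref{2geodesics3c} (with $c\mapsto d$) to obtain $(a,d)$ as a switching pair with $(a,b)\vartriangleleft(a,d)\vartriangleleft(b,d)$. The main obstacle is to order $(a,c)$ against $(a,d)$, and I would handle it by contradiction: suppose $(a,d)\vartriangleleft(a,c)$; then restricting to $\{a,c,d\}$ and applying Lemma~\ref{2geodesics3}\ref{2geodesics3b} (with $b\mapsto c$, $c\mapsto d$) forces $(c,d)\vartriangleleft(a,d)\vartriangleleft(a,c)$, so $(c,d)\vartriangleleft(b,c)$ in $S(\ca)$. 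But then the restriction to $\{b,c,d\}$ contains all three of $(b,c),(b,d),(c,d)$, and since $(b,c)\vartriangleleft(b,d)$ by hypothesis, Lemma~\ref{2geodesics} forces the never-bottom arrangement $(b,c)\vartriangleleft(b,d)\vartriangleleft(c,d)$ on this restriction, contradicting $(c,d)\vartriangleleft(b,c)$. Hence $(a,c)\vartriangleleft(a,d)$, and concatenating all the derived inequalities gives $(a,b)\vartriangleleft(a,c)\vartriangleleft(a,d)\vartriangleleft(b,d)$.

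Part~\ref{2geodesics4b} is structurally symmetric. Restricting to $\{a,b,d\}$ and applying Lemma~\ref{2geodesics3}\ref{2geodesics3d} produces $(a,d)$ with $(b,d)\vartriangleleft(a,d)\vartriangleleft(a,b)$; combining with the hypothesis yields $(a,d)\vartriangleleft(a,c)$, and restricting to $\{a,c,d\}$ together with Lemma~\ref{2geodesics3}\ref{2geodesics3b} produces $(c,d)$ with $(c,d)\vartriangleleft(a,d)\vartriangleleft(a,c)$. The remaining order $(c,d)\vartriangleleft(b,d)$ is again the crux: assuming for contradiction $(b,d)\vartriangleleft(c,d)$, restricting to $\{b,c,d\}$ and applying Lemma~\ref{2geodesics3}\ref{2geodesics3e} produces $(b,c)$ with $(b,c)\vartriangleleft(b,d)\vartriangleleft(c,d)$. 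Restricting further to $\{a,b,c\}$ now contains $(a,b),(a,c),(b,c)$, and Lemma~\ref{2geodesics} together with $(a,b)\vartriangleleft(a,c)$ forces the never-bottom order $(a,b)\vartriangleleft(a,c)\vartriangleleft(b,c)$ on this restriction, contradicting the chain $(b,c)\vartriangleleft(b,d)\vartriangleleft(a,d)\vartriangleleft(a,b)$. Hence $(c,d)\vartriangleleft(b,d)$, and concatenation yields the desired $(c,d)\vartriangleleft(b,d)\vartriangleleft(a,d)\vartriangleleft(a,c)$.
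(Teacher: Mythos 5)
Your proposal is correct and follows essentially the same route as the paper: the same restrictions to triples with the same parts of Lemma~\ref{2geodesics3} (parts \ref{2geodesics3e}, \ref{2geodesics3c}, \ref{2geodesics3b} for (a) and \ref{2geodesics3d}, \ref{2geodesics3b}, \ref{2geodesics3e} for (b)), and the same contradiction via the dichotomy of Lemma~\ref{2geodesics} on $\ca_{\{b,c,d\}}$ and $\ca_{\{a,b,c\}}$ respectively. The only difference is cosmetic: you name the forced never-bottom branch explicitly before contradicting it, whereas the paper simply notes the derived triple ordering is incompatible with the dichotomy.
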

\begin{proof}
    (a) Note that $R=abcd$, and so we have $R_{\{a,b,c\}}=abc$.
    Since $(a,c)\vartriangleleft (b,c)$ by assumption,
    we have $(a,b)$ is in $S(\ca)$ and $(a,b)\vartriangleleft(a,c)$ in $S(\ca)$ by Lemma~\ref{2geodesics3}\ref{2geodesics3e}.
    Recall $(a,c)\vartriangleleft (b,c)\vartriangleleft (b,d)$ by assumption, we have $(a,b)\vartriangleleft(b,d)$.
    Therefore, by Lemma~\ref{2geodesics3}\ref{2geodesics3c}, $(a,d)$ is in $S(\ca)$ and $(a,b)\vartriangleleft(a,d)\vartriangleleft(b,d)$ .

    Now if $(a,c)\vartriangleleft(a,d)$, then $(a,b) \vartriangleleft(a,c)\vartriangleleft (a,d)\vartriangleleft(b,d)$ as desired.
    Suppose $(a,d)\vartriangleleft(a,c)$ for a contradiction.
    Now $R_{\{a,c,d\}}=acd$. By Lemma~\ref{2geodesics3}\ref{2geodesics3b}, the swap $(c,d)$ is in $S(\ca)$ and $(c,d)\vartriangleleft(a,d)$.
    This gives $(c,d)\vartriangleleft(a,d)\vartriangleleft(a,c)\vartriangleleft(b,c)\vartriangleleft(b,d)$.
    In particular, $S(\cg_{\{b,c,d\}})$ is $(c,d)\vartriangleleft(b,c)\vartriangleleft(b,d)$, which contradicts the dichotomy of Lemma~\ref{2geodesics} for $\ca_{\{b,c,d\}}$.

(b) Since $(b,d)\vartriangleleft(a,b)$ in $\cg_{\{a,b,d\}}$, we have $(b,d)\vartriangleleft(a,d)\vartriangleleft(a,b)$ in $S(\cg_{\{a,b,d\}})$ by Lemma~\ref{2geodesics3}\ref{2geodesics3d}.
Since $(b,d)\vartriangleleft(a,b)\vartriangleleft (a,c)$ in $S(\ca)$ by assumption, we have
$(b,d)\vartriangleleft(a,d)\vartriangleleft(a,b)\vartriangleleft (a,c)$. It remains to show that $(c,d)$ is in $S(\ca)$ and $(c,d)\vartriangleleft(b,d)$.

Now $R=abcd$ and so $R_{\{a,c,d\}}=acd$.
Since $(a,d)$ appears in $S(\cg)$ and $(a,d)\vartriangleleft (a,c)$, we have $(c,d)$ is a switching pair in $S(\ca)$ and $(c,d)\vartriangleleft(a,d)$ in $S(\ca)$ by Lemma~\ref{2geodesics3}\ref{2geodesics3b}.
  Now suppose $(b,d)\vartriangleleft(c,d)$ for a contradiction.
Then since $R_{\{b,c,d\}}=bcd$  and $(b,d)\vartriangleleft (c,d)$, we have $(b,c)$ is in $S(\ca)$ and $(b,c)\vartriangleleft(b,d)$ in $S(\ca)$
by Lemma~\ref{2geodesics3}\ref{2geodesics3e}.
Moreover, $(b,d)\vartriangleleft(a,b)\vartriangleleft (a,c)$ by assumption. We have  $(b,c)\vartriangleleft(b,d)\vartriangleleft(a,b)\vartriangleleft (a,c)$.
    In particular, $S(\cg_{\{a,b,c\}})$ is $(b,c)\vartriangleleft(a,b)\vartriangleleft(a,c)$, which contradicts the dichotomy of Lemma~\ref{2geodesics} for $\ca_{\{a,b,c\}}$.  
\end{proof}

\section{Lemmas for equivalent paths of alike linear orders}

To understand paths of alike linear orders and bring forward further lemmas about them, we need to define the equivalence classes of paths of alike linear orders.

One way to represent a path of alike linear orders is to use wiring diagrams, a tool first introduced by \cite{goodmanj.e.1980} [p.32], which consists of piecewise linear wires. The wires (lines) are horizontal except in the small neighbourhoods around their crossings with other wires. For example, consider the following path of alike linear orders on $\cl(\{a,b,c,d\})$ can be represented by the wiring diagram as shown in Figure~\ref{geo1}.
$$\ca^1=(abcd, bacd,bcad,cbad,cbda,cdba).$$
The sequence of switching pairs from $\ca$ is
$$S(\ca^1):(a,b),(a,c),(b,c),(a,d),(b,d).$$
The intersection of two wires in the wiring diagram indicates a switching pair of alternatives from $S(\ca)$. Each vertical dotted line that intersects all four wires corresponds to a linear order in $\ca$, as it is labelled in downward order. For instance, the leftmost points labelled $a,b,c,d$ in downward order represent the initial linear order $abcd$ in $\ca$.

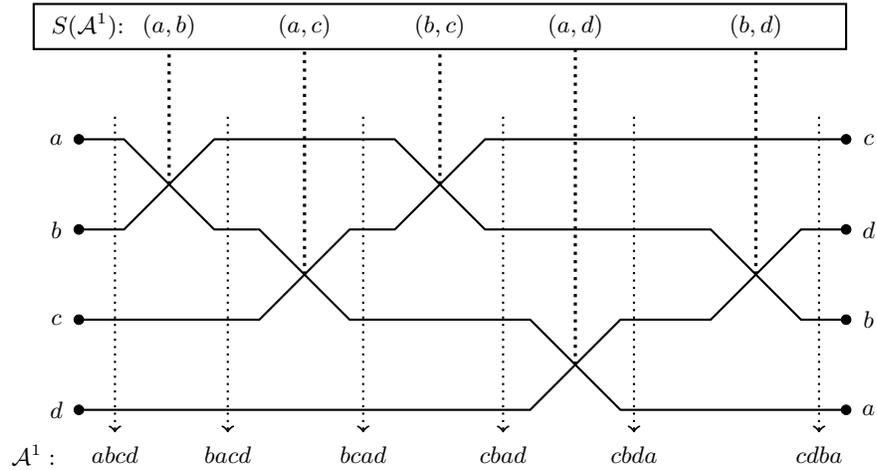
\begin{figure}[H]
    \centering
\begin{tikzpicture}[scale=0.60]
\draw[fill=black] (0,0) circle (3pt);
\draw[fill=black] (0,2) circle (3pt);
\draw[fill=black] (0,4) circle (3pt);
\draw[fill=black] (0,6) circle (3pt);
\draw[fill=black] (17,0) circle (3pt);
\draw[fill=black] (17,2) circle (3pt);
\draw[fill=black] (17,4) circle (3pt);
\draw[fill=black] (17,6) circle (3pt);
 
\node at (-0.5,6) {$a$};
\node at (-0.5,0) {$d$};
\node at (-0.5,2) {$c$};
\node at (-0.5,4) {$b$};
\node at (17.5,0) {$a$};
\node at (17.5,2) {$b$};
\node at (17.5,4) {$d$};
\node at (17.5,6) {$c$};

\draw[thick] (0,0)-- (10,0) --(12,2) --(14,2) --(15,3) --(16,4)--(17,4);
\draw[thick] (0,2) -- (4,2) --(6,4)--(7,4)--(9,6)--
(13,6) --(17,6);
\draw[thick] (0,4) -- (1,4) -- (3,6)--(7,6)--(9,4)--(11,4)--(13,4) --(14,4)--(16,2) --(17,2);
\draw[thick] (0,6)--(1,6)--(3,4)--(4,4)--(6,2) -- (10,2) -- (12,0) -- (17,0);

\draw[very thick,dotted] (2,5)--(2,8);
\draw[very thick,dotted] (5,3)--(5,8);
\draw[very thick,dotted]  (8,5)--(8,8);
\draw[very thick,dotted] (11,1)--(11,8);
\draw[very thick,dotted]  (15,3)--(15,8);

\node at (0.2,8.5) {$S(\ca^1$):};
\node at (2,8.5) {$(a,b)$};
\node at (5,8.5) {$(a,c)$};
\node at (8,8.5) {$(b,c)$};
\node at (11,8.5) {$(a,d)$};
\node at (15,8.5) {$(b,d)$};

\draw[thick](-1,8) rectangle  (17,9);

\node at (-1,-1) {$\ca^1:$};

\draw[thick,dotted][->] (0.8,6.5)--(0.8,-0.5);
\node at (0.8,-1) {$abcd$};

\draw[thick,dotted][->] (3.3,6.5)--(3.3,-0.5);
\node at (3.3,-1) { $bacd$};

\draw[thick,dotted][->] (6.3,6.5)--(6.3,-0.5);
\node at (6.3,-1) {$bcad$};

\draw[thick,dotted][->] (9.4,6.5)--(9.4,-0.5);
\node at (9.4,-1) {$cbad$};

\draw[thick,dotted][->] (12.3,6.5)--(12.3,-0.5);
\node at (12.3,-1) {$cbda$};

\draw[thick,dotted][->] (16.4,6.5)--(16.4,-0.5);
\node at (16.4,-1) {$cdba$};

\end{tikzpicture}
    \caption{A wiring diagram representation for a geodesic on $\cl(\{a,b,c,d\})$ 
    }\label{geo1}
\end{figure}

Observe that the third and fourth switching pairs in $S(\ca^1)$, namely $(b,c)$ and $(a,d)$, are an adjacent disjoint pair of switching pairs in $S(\ca^1)$.
Precisely, given a sequence of switching pairs
    $$S:(a_1,b_1),\ldots, (a_p,b_p),$$
and $i\in[p-1]$, then the pair $(a_i,b_i)$ and $(a_{i+1},b_{i+1})$ is called {\bf an adjacent disjoint pair of switching pairs} in $S$ if $\{a_i,b_i\}\cap\{a_{i+1},b_{i+1}\}=\emptyset$. 

\begin{definition}
Let $R,T$ be two linear orders in $\cl(A)$  and let $\ca$ and $\cb$ be paths of alike linear orders on connecting $R$ and $T$.
    Then we say $S(\ca)$ and $S(\cb)$ are {\bf equivalent} if they differ only by swaps of adjacent disjoint pairs of switching pairs of alternatives. We say that $\ca$ and $\cb$ are {\bf equivalent} if $S(\ca)$ and $S(\cb)$ are equivalent.
\end{definition} 

Note that if $\ca$ and $\cb$ are equivalent paths that differ by $n$ swaps of adjacent disjoint pairs, then there is a sequence $\ca, \ca_1, \ldots, \ca_{n-1}, \cb$ of paths that are all equivalent and such that each path differs by exactly one swap of adjacent disjoint pairs from its predecessor in the sequence. 
Moreover, all equivalent paths of $\ca$ form an equivalence class, and the transitivity relation holds for them. 
Hence, we may frequently assume that any two equivalent paths differ by one swap of adjacent disjoint pairs. 
This is useful when we talk about a portion of a geodesic.

\begin{lemma}\label{equismall}
    Let $\ca=(R_1,\ldots, R_k)$ be a geodesic. Let $\ca'=(R'_1,\ldots, R'_k)$ be a geodesic which is equivalent to $\ca$. Let $n\in[k-1]$. Define the geodesic $\cb=(R'_1,\ldots, R'_n)$. Let $\cb'=(R''_1,\ldots, R''_n)$ be a geodesic which is equivalent to $\cb$. Define the sequence of linear orders $\ca''=(R''_1,\ldots, R''_n, R'_{n+1},\ldots,R'_k)$.
    Then $\ca''$ is a geodesic and it is equivalent to $\ca$.
\end{lemma}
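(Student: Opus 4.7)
The plan is to check two things about $\ca''$: first, that it is a well-defined path whose sequence of switching pairs is obtained from $S(\ca')$ by swaps of adjacent disjoint pairs, and second, that every switching pair in $S(\ca'')$ occurs exactly once, so that Lemma~\ref{wex1.1}\ref{wex1.1b} forces it to be a geodesic.

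First I would observe that the concatenation in the definition of $\ca''$ makes sense. Since $\cb'$ is equivalent to $\cb$, equivalence preserves the endpoints of a path, so $R''_1=R'_1$ and $R''_n=R'_n$. Hence the consecutive pair $(R''_n, R'_{n+1})$ in $\ca''$ is just $(R'_n, R'_{n+1})$, which are alike as consecutive terms of the path $\ca'$. All other consecutive pairs lie entirely within $\cb'$ or entirely within the tail $(R'_n,R'_{n+1},\ldots,R'_k)$ of $\ca'$, so $\ca''$ is a path of alike linear orders.

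Next I would compare switching-pair sequences. The sequence $S(\ca'')$ is the concatenation of $S(\cb')$ with the tail of $S(\ca')$ starting at position $n$. Because $\cb'$ is equivalent to $\cb$, $S(\cb')$ is obtained from $S(\cb)$ by a finite sequence of swaps of adjacent disjoint pairs; performing those same swaps on $S(\ca')$ (which has $S(\cb)$ as its length-$(n-1)$ prefix) yields exactly $S(\ca'')$. Thus $\ca''$ is equivalent to $\ca'$, and by transitivity of the equivalence relation on paths, $\ca''$ is equivalent to $\ca$.

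Finally I would argue that $\ca''$ is a geodesic. Since $\ca'$ is a geodesic, every switching pair occurs exactly once in $S(\ca')$ by Lemma~\ref{wex1.1}\ref{wex1.1a}. Swaps of adjacent disjoint pairs of switching pairs permute the entries of the sequence without creating or destroying any entries, so every switching pair also occurs exactly once in $S(\ca'')$. By Lemma~\ref{wex1.1}\ref{wex1.1b}, $\ca''$ is therefore a geodesic connecting $R''_1=R_1$ and $R'_k=R_k$. There is no real obstacle here; the only point that needs a little care is recognising that the endpoint-preservation property of the equivalence relation is what makes the concatenation meaningful in the first place.
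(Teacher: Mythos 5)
Your proposal is correct and follows essentially the same route as the paper's proof: you establish that $\ca''$ is a path via endpoint preservation of equivalent paths, deduce that $S(\ca'')$ arises from $S(\ca')$ by the same swaps of adjacent disjoint pairs performed within the prefix $S(\cb)$, and then invoke Lemma~\ref{wex1.1}\ref{wex1.1a} and \ref{wex1.1}\ref{wex1.1b} together with transitivity of equivalence. The only difference is cosmetic ordering (you prove equivalence before the geodesic property, the paper does the reverse), so nothing further is needed.
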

\begin{proof}
    Note that $\cb=(R'_1,\ldots, R'_n)$ and $\cb'=(R''_1,\ldots, R''_n)$ are equivalent, one has $S(\cb)$ and $S(\cb')$ contains the same switching pairs and $R'_n=R''_n$.
    Since $R'_n$ and $R'_{n+1}$ are alike, one has $R''_n$ and $R'_{n+1}$ are alike.
    Hence, $\ca''$ defines a path of alike linear orders.
Since $S(\cb)$ and $S(\cb')$ contains the same switching pairs, 
$S(\ca'')$ and $S(\ca')$ contains the same switching pairs.    
    Now $\ca'$ is a geodesic and so every switching pair in $S(\ca')$ occurs exactly once by Lemma~\ref{wex1.1}\ref{wex1.1a}.
    Hence, every switching pair in $S(\ca')$ occurs exactly once and so $\ca''$ is a geodesic by Lemma~\ref{wex1.1}\ref{wex1.1b}.
    
    Note that $S(\cb)$ and $S(\cb')$ are equivalent. Hence, they differ by swaps of adjacent disjoint pairs of switching pairs. Since the first $n-1$ switching pairs of $S(\ca')$ and $S(\ca'')$ are the switching pairs in $S(\cb)$ and $S(\cb')$, respectively,
    $S(\ca')$ and $S(\ca'')$ also differ by swaps of adjacent disjoint pairs of switching pairs.
    Therefore, $\ca'$ and $\ca''$ are equivalent.
    Since $\ca'$ and $\ca$ are equivalent, we have $\ca''$ and $\ca$ are equivalent.
\end{proof}

\begin{example}
    Recall the path of alike linear orders in Figure~\ref{geo1}:
$$\ca^1=(abcd, bacd,bcad,cbad,cbda,cdba).$$
The sequence of switching pairs is
$$S(\ca^1):(a,b),(a,c),(b,c),(a,d),(b,d).$$
Then $(b,c)$ and $(a,d)$ are an adjacent disjoint pair of switching pairs.
Hence, $\ca^1$ is equivalent to the following path of alike linear orders, as shown in Figure~\ref{geo2},
$$\ca^2=(abcd, bacd,bcad,bcda,cbda,cdba).$$
Actually, this is the only path of alike linear orders that is equivalent to $\ca^1$.
\end{example}

\begin{figure}[H]
    \centering
\begin{tikzpicture}[scale=0.60]
\draw[fill=black] (0,0) circle (3pt);
\draw[fill=black] (0,2) circle (3pt);
\draw[fill=black] (0,4) circle (3pt);
\draw[fill=black] (0,6) circle (3pt);
\draw[fill=black] (17,0) circle (3pt);
\draw[fill=black] (17,2) circle (3pt);
\draw[fill=black] (17,4) circle (3pt);
\draw[fill=black] (17,6) circle (3pt);
 
\node at (-0.5,6) {$a$};
\node at (-0.5,0) {$d$};
\node at (-0.5,2) {$c$};
\node at (-0.5,4) {$b$};
\node at (17.5,0) {$a$};
\node at (17.5,2) {$b$};
\node at (17.5,4) {$d$};
\node at (17.5,6) {$c$};

\draw[thick] (0,0)-- (7,0) --(9,2) --(14,2) --(15,3) --(16,4)--(17,4);
\draw[thick] (0,2) -- (4,2) --(6,4)--(10,4)--(12,6)--
(13,6) --(17,6);
\draw[thick] (0,4) -- (1,4) -- (3,6)--(10,6)--(12,4)--(13,4) --(14,4)--(16,2) --(17,2);
\draw[thick] (0,6)--(1,6)--(3,4)--(4,4)--(6,2) -- (7,2) -- (9,0) -- (17,0);

\draw[very thick,dotted] (2,5)--(2,8);
\draw[very thick,dotted] (5,3)--(5,8);
\draw[very thick,dotted]  (8,1)--(8,8);
\draw[very thick,dotted] (11,5)--(11,8);
\draw[very thick,dotted]  (15,3)--(15,8);

\node at (0.2,8.5) {$S(\ca^2$):};
\node at (2,8.5) {$(a,b)$};
\node at (5,8.5) {$(a,c)$};
\node at (8,8.5) {$(a,d)$};
\node at (11,8.5) {$(b,c)$};
\node at (15,8.5) {$(b,d)$};

\draw[thick](-1,8) rectangle  (17,9);

\node at (-1,-1) {$\ca^2:$};

\draw[thick,dotted][->] (0.8,6.5)--(0.8,-0.5);
\node at (0.8,-1) {$abcd$};

\draw[thick,dotted][->] (3.3,6.5)--(3.3,-0.5);
\node at (3.3,-1) { $bacd$};

\draw[thick,dotted][->] (6.3,6.5)--(6.3,-0.5);
\node at (6.3,-1) {$bcad$};

\draw[thick,dotted][->] (9.4,6.5)--(9.4,-0.5);
\node at (9.4,-1) {$bcda$};

\draw[thick,dotted][->] (12.3,6.5)--(12.3,-0.5);
\node at (12.3,-1) {$cbda$};

\draw[thick,dotted][->] (16.4,6.5)--(16.4,-0.5);
\node at (16.4,-1) {$cdba$};

\end{tikzpicture}
    \caption{A wiring diagram representation for a geodesic on $\cl(\{a,b,c,d\})$ 
    }\label{geo2}
\end{figure}
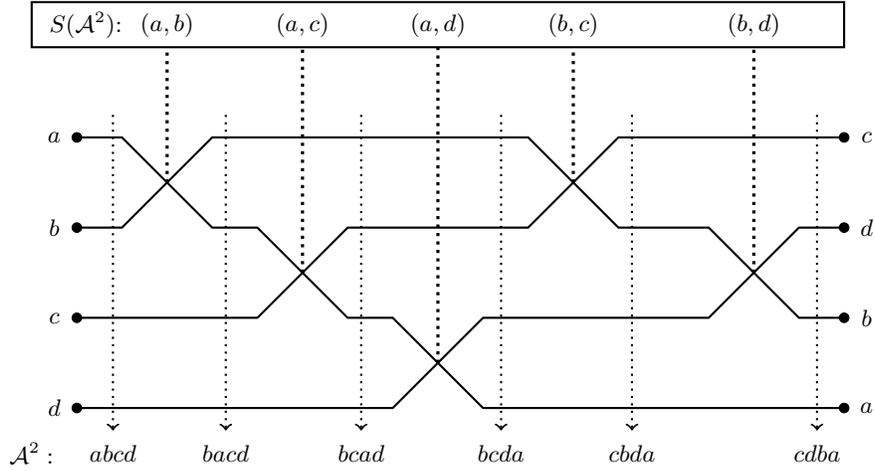

Note that the wiring diagram in Figure~\ref{geo2} would be the same as the one in Figure~\ref{geo1} if we stretch the intersection $(b,c)$ in Figure~\ref{geo2} toward left after passing $(a,d)$, or stretch the intersection $(a,d)$ to the right after passing $(b,c)$. In particular, any three wires of the four wires in it will still intersect in the same manner. For example, $$\ca^1_{\{a,b,c\}}=(abc, bac,bca,cba)$$ after removing duplication $cba$ twice, and $$\ca^2_{\{a,b,c\}}=(abc, bac,bca,cba)$$ after removing duplication $bca$ and $cba$. We emphasise that $\ca^1_{\{a,b,c\}}=\ca^2_{\{a,b,c\}}$.
Moreover, $$S(\ca^2_{\{a,b,c\}}):(a,b),(a,c),(b,c).$$ 
Now $S(\ca^2_{\{a,b,c\}})$ is the sequence of switching pairs in Figure~\ref{geo2} by removing all switching pairs that contain at least one alternative outside the set $\{a,b,c\}$. This immediately gives the following lemma.

\begin{lemma}\label{conex0}
    Let $\ca$ be a path of alike linear orders on $\cl(A)$ and $a,b,c\in A$ be distinct alternatives.
    Then the number of occurrences of the switching pair $(a,b)$ in $S(\ca)$ is the same as the number of occurrences of the switching pair $(a,b)$ in $S(\ca_{\{a,b,c\}})$.
\end{lemma}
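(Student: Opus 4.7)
The plan is to set up a bijection between the occurrences of $(a,b)$ in $S(\ca)$ and the occurrences of $(a,b)$ in $S(\ca_{\{a,b,c\}})$ by tracking what happens to each step of $\ca$ under restriction. Write $\ca=(R_1,\ldots,R_k)$ and, for each $i\in[k-1]$, let $s_i$ denote the switching pair between $R_i$ and $R_{i+1}$, so that $S(\ca)=(s_1,\ldots,s_{k-1})$.

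First I would prove the following dichotomy at each step $i$: the restrictions $(R_i)_{\{a,b,c\}}$ and $(R_{i+1})_{\{a,b,c\}}$ are equal precisely when $s_i$ contains at most one alternative from $\{a,b,c\}$; and when $s_i\subseteq\{a,b,c\}$, the two restrictions are alike with switching pair exactly $s_i$. The key observation is that the two alternatives in $s_i$ are adjacent in $R_i$ by the definition of \emph{alike}, so no alternative of $A$ lies strictly between them in $R_i$; in particular, when both elements of $s_i$ belong to $\{a,b,c\}$, they remain adjacent after deleting everything outside $\{a,b,c\}$, and the restricted step is a genuine swap of adjacent alternatives.

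Next I would unpack the definition of $\ca_{\{a,b,c\}}$: this path is obtained from the sequence $(R_1)_{\{a,b,c\}},\ldots,(R_k)_{\{a,b,c\}}$ by collapsing each maximal run of consecutive equal linear orders into a single entry. Consequently, the consecutive pairs of distinct orders in $\ca_{\{a,b,c\}}$ correspond bijectively to those indices $i\in[k-1]$ with $(R_i)_{\{a,b,c\}}\ne(R_{i+1})_{\{a,b,c\}}$, which by the dichotomy are exactly the indices $i$ with $s_i\in\{(a,b),(a,c),(b,c)\}$. Under this bijection, the switching pair recorded at such a step in $S(\ca_{\{a,b,c\}})$ is $s_i$ itself. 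Specialising the bijection to the value $(a,b)$ yields the claimed equality of counts.

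The only mildly subtle step, and the one I would treat most carefully, is the dichotomy: one needs to rule out that restriction accidentally collapses a non-trivial swap with $s_i\subseteq\{a,b,c\}$, which the adjacency argument above prevents. Once the dichotomy is established, the remainder is routine bookkeeping about removing consecutive duplicates.
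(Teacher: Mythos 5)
Your proposal is correct and follows essentially the same route as the paper: both identify that restriction deletes exactly those steps whose switching pair involves an alternative outside $\{a,b,c\}$, while every step with pair inside $\{a,b,c\}$ (in particular every $(a,b)$-step) survives with the same switching pair, giving the equality of counts. Your explicit dichotomy and the adjacency observation merely spell out carefully what the paper's shorter proof leaves implicit; there is no gap.
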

\begin{proof}
Since $S(\ca_{\{a,b,c\}})$ is obtained from $S(\ca)$, the number of occurrences of $(a,b)$ in $S(\ca_{\{a,b,c\}})$ is at least as many as the number of times it appears in $S(\ca)$. Moreover, $S(\ca_{\{a,b,c\}})$ is obtained from $S(\ca)$ by removing all switching pairs that contain at least one alternative outside the set $\{a,b,c\}$. 
If $\ca=(R_1,\ldots,R_k)$, then $\ca_{\{a,b,c\}}$ is obtained from $(R_1)_{\{a,b,c\}}$,\ldots, $(R_k)_{\{a,b,c\}}$ by removing duplicate equal linear orders which are next to each other. This happens precisely if the corresponding linear orders in $\ca$ have a switching pair for which at least one of the alternatives is not in the set $\{a,b,c\}$.
Since $\{a,b\} \subset \{a,b,c\}$, all $(a,b)$ pairs that are in $S(\ca)$ will remain in $S(\ca_{\{a,b,c\}})$.
\end{proof}

The above example of the equivalent paths $\ca^1$ and $\ca^2$ provides insight into determining equivalent paths.

\begin{lemma}\label{conex1}
Let $R,T$ be two linear orders in $\cl(A)$  and let $\ca$ and $\cb$ be paths of alike linear orders connecting $R$ and $T$. Then $\ca$ and $\cb$ are equivalent if and only if $\ca_{\{a,b,c\}}=\cb_{\{a,b,c\}}$ for every distinct $a,b,c\in A$. 
\end{lemma}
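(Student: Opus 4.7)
The proof splits into two directions.

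For the forward direction, assume $\ca$ and $\cb$ are equivalent. By transitivity of the equivalence relation, it suffices to treat the case in which $S(\cb)$ is obtained from $S(\ca)$ by a single interchange of an adjacent disjoint pair of switching pairs, say of $(x,y)$ and $(z,w)$ with $\{x,y\}\cap\{z,w\}=\emptyset$. For any distinct $a,b,c\in A$ we have $|\{x,y,z,w\}|=4>3$, so at most one of the two swapped pairs is contained in $\{a,b,c\}$. By Lemma~\ref{conex0}, the restricted sequences $S(\ca_{\{a,b,c\}})$ and $S(\cb_{\{a,b,c\}})$ arise from $S(\ca)$ and $S(\cb)$ by deleting exactly those switching pairs not contained in $\{a,b,c\}$; hence either both swapped pairs disappear, or only one survives and interchanging its position with a deleted pair has no effect. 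Since $\ca$ and $\cb$ also begin at the same linear order $R$, we conclude $\ca_{\{a,b,c\}}=\cb_{\{a,b,c\}}$.

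For the converse, assume $\ca_{\{a,b,c\}}=\cb_{\{a,b,c\}}$ for every distinct $a,b,c\in A$. By Lemma~\ref{conex0}, each switching pair occurs equally often in $S(\ca)$ and $S(\cb)$, so these two sequences share a common length $N$, and I induct on $N$. The base case $N=0$ is immediate since both paths then equal $(R)$. For the inductive step with $N\geq 1$, let $p$ be the first switching pair of $S(\ca)$ and $q$ the first switching pair of $S(\cb)$.

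The crucial observation is the dichotomy that either $p=q$ or $p\cap q=\emptyset$. Indeed, if $p\neq q$ shared an alternative, then $p\cup q$ would be a triple of distinct alternatives, and the first switching pair in $S(\ca_{p\cup q})$ would be $p$ whereas the first switching pair in $S(\cb_{p\cup q})$ would be $q$, contradicting the assumption. If $p=q$, I delete the first switching pair from both paths to obtain sub-paths $\ca^{\ast}$ and $\cb^{\ast}$; for every triple $\{a,b,c\}$ the restrictions $\ca^{\ast}_{\{a,b,c\}}$ and $\cb^{\ast}_{\{a,b,c\}}$ still agree (they are obtained from the common restriction of $\ca$ and $\cb$ by dropping the common first element if $p\subseteq\{a,b,c\}$, and are unchanged otherwise), so the inductive hypothesis produces equivalence, which lifts to $\ca$ and $\cb$ by prepending $p$. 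If instead $p\cap q=\emptyset$, let $k$ be the position of the first occurrence of $q$ in $S(\ca)$ (so $k\geq 2$), and let $r$ denote the switching pair at position $k-1$. Rerunning the triple argument on the triple $r\cup q$ forces $r\cap q=\emptyset$: if they shared an alternative, then no occurrence of $q$ would appear in $S(\ca)$ before position $k$, yet $r$ at position $k-1$ lies in $r\cup q$, so the first switching pair of $S(\ca_{r\cup q})$ would be distinct from $q$, while in $S(\cb_{r\cup q})$ the first switching pair is $q$, a contradiction. Hence $r$ and $q$ form an adjacent disjoint pair in $S(\ca)$, which I swap to produce a path $\ca'$ equivalent to $\ca$ in which $q$ sits at position $k-1$. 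Iterating, I obtain a path $\ca''$ equivalent to $\ca$ in which $q$ occurs at position $1$, reducing to the already-handled case $p=q$.

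The main technical point is the triple-restriction argument that establishes the dichotomy $p=q$ or $p\cap q=\emptyset$ and the analogous claim for $r, q$ that enables the leftward march; once that local step is in place, the induction on $N$ together with the commutation moves completes the proof.
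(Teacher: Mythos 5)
Your proof is correct and follows essentially the same route as the paper: the forward direction reduces to a single swap of an adjacent disjoint pair and compares restrictions to triples, and the converse uses Lemma~\ref{conex0} to equate the multisets of switching pairs and then triple-restriction contradictions to justify commuting adjacent disjoint pairs so as to match the two paths from the front. The only differences are organizational (you induct on the common length, delete the matched first pair, and march $q$ leftward one adjacent-disjoint swap at a time, while the paper inducts on the length of the matched prefix and moves the matching occurrence forward in one block after locating the first intersecting pair), which does not change the substance of the argument.
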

\begin{proof}
$(\Rightarrow)$
Suppose $\ca$ and $\cb$ are equivalent paths on $\cl(A)$.  Then $S(\ca)$ and $S(\cb)$ differ by $n$ swaps of adjacent disjoint pairs of switching pairs. 
We claim that it suffices to prove the implication if $S(\ca)$ and $S(\cb)$ differ by only one swap of such a pair.
This can be done by induction on the number of swaps. Suppose $S(\ca)$ and $S(\cb)$ differ by $n-1$ swaps of adjacent disjoint pairs of switching pairs implies that $\ca_{\{a,b,c\}}=\cb_{\{a,b,c\}}$ for every distinct $a,b,c\in A$.  
Let $\cc$ be an equivalent path of $\ca$ and $\cb$ such that $S(\cc)$ and $S(\ca)$ differ by one swap of adjacent disjoint pair, and $S(\cc)$ and $S(\cb)$ differ by $n-1$ swaps.
Then $\cc_{\{a,b,c\}}=\ca_{\{a,b,c\}}$ for every distinct $a,b,c\in A$ by the base case, and $\cc_{\{a,b,c\}}=\cb_{\{a,b,c\}}$ by the induction hypothesis. That implies $\ca_{\{a,b,c\}}=\cb_{\{a,b,c\}}$.

Now let $S(\ca)$ and $S(\cb)$ differ by only one swap of adjacent disjoint pairs of switching pairs and their representation are
$$S(\ca):(a_1,b_1),\ldots, (a_i,b_i),(a_{i+1},b_{i+1}),\ldots,(a_p,b_p),$$
    $$S(\cb):(c_1,d_1),\ldots,(c_i,d_i),(c_{i+1},d_{i+1}),\ldots, (c_p,d_p),$$
such that $(a_j, b_j)=(c_j,d_j)$ for all $j\notin\{i,i+1\}$, 
and $(a_i,b_i)=(c_{i+1},d_{i+1})$ and $(a_{i+1},b_{i+1})=(c_{i},d_{i})$. Let $a,b,c\in A$ be any distinct alternatives.
We will show that $S(\ca_{\{a,b,c\}})=S(\cb_{\{a,b,c\}})$, which implies $\ca_{\{a,b,c\}}=\cb_{\{a,b,c\}}$.

{\bf Case 1.} Suppose $\{a,b,c\}\cap\{a_i, b_i, a_{i+1},b_{i+1}\}=\emptyset$. Then since $(a_j, b_j)=(c_j,d_j)$ for all $j\notin\{i,i+1\}$, we have $S(\ca_{\{a,b,c\}})=S(\cb_{\{a,b,c\}})$.

{\bf Case 2.} Suppose $|\{a,b,c\}\cap\{a_i, b_i, a_{i+1},b_{i+1}\}|=1$. Then since $S(\ca_{\{a,b,c\}})$ is obtained by removing all switching pairs in $S(\ca)$ that contain at least one alternative outside the set $\{a,b,c\}$, all switching pairs in $S(\ca_{\{a,b,c\}})$ are from switching pairs $(a_j,b_j)$ for some $j\notin\{i,i+1\}$. The same reasoning applies to the switching pairs in $S(\cb_{\{a,b,c\}})$ and so $S(\ca_{\{a,b,c\}})=S(\cb_{\{a,b,c\}})$.

{\bf Case 3.} Suppose $|\{a,b,c\}\cap\{a_i, b_i, a_{i+1},b_{i+1}\}|=2$. There are two subcases.

{\bf Case 3.1.} Suppose $\{a_i, b_i\}\subset\{a,b,c\}$ or $\{a_{i+1}, b_{i+1}\}\subset\{a,b,c\}$.
Without loss of generality, suppose $(a_i,b_i)=(a,b)$. Then since $\{a_{i+1},b_{i+1}\}$ is disjoint from $\{a_i,b_i\}$, it contains at least one alternative outside the set $\{a,b,c\}$ and so it will be removed from $S(\ca)$ to obtain $S(\ca_{\{a,b,c\}})$. Likewise, $(c_{i},d_{i})$ will be removed from $S(\cb)$ to obtain $S(\cb_{\{a,b,c\}})$. 
Since $(a_{i+1},b_{i+1})=(c_{i},d_{i})$, we have $S(\ca_{\{a,b,c\}})=S(\cb_{\{a,b,c\}})$.

{\bf Case 3.2.} Suppose $|\{a_i, b_i\}\cap\{a,b,c\}|=1$.
Then both $(a_{i},b_{i})$ and $(a_{i+1},b_{i+1})$ contain one alternative outside the set $\{a,b,c\}$ and so they will be removed from $S(\ca)$ to obtain $S(\ca_{\{a,b,c\}})$. Likewise, both $(c_{i},d_{i})$ and $(c_{i+1},d_{i+1})$ will be removed from $S(\cb)$ to obtain $S(\cb_{\{a,b,c\}})$. 
Hence $S(\ca_{\{a,b,c\}})=S(\cb_{\{a,b,c\}})$.

{\bf Case 4.} Suppose $\{a,b,c\}\subseteq\{a_i, b_i, a_{i+1},b_{i+1}\}$. This is the same as Case 3.1.

Therefore, $\ca_{\{a,b,c\}}=\cb_{\{a,b,c\}}$ for every distinct $a,b,c\in A$.\\


$(\Leftarrow)$  Suppose $\ca_{\{a,b,c\}}=\cb_{\{a,b,c\}}$ for every triple of distinct alternatives $a,b,c\in A$. Then $S(\ca_{\{a,b,c\}})=S(\cb_{\{a,b,c\}})$. 
Let $u,v\in A$ different, we shall show that the number of switching pairs $(u,v)$ in $S(\ca)$ is the same as the one in $S(\cb)$. Choose $w\in A\setminus\{u,v\}$. Then by Lemma 11, the number of switching pairs $(u,v)$ in $S(\ca)$ is the same as the one in $S(\ca_{\{u,v,w\}})$. By assumption this is the same as in $S(\cb_{\{u,v,w\}})$. Using again Lemma 11, this is the same as in $S(\cb)$. 
Hence, the multiset of switching pairs from $S(\ca)$ is the same as the one from $S(\cb)$, and so $S(\ca)$ and $S(\cb)$ have the same length.
%

Let $p$ be the length of $S(\ca)$. For all $i\in\{0,1\ldots,p\}$, let $P(i)$ be the hypothesis that the path $\ca$ is equivalent to a path $\ca^*$ such that the first $i$ switching pairs of $\ca^*$ are equal to the first $i$ switching pairs of $\cb$. Then the lemma follows once we show that $P(p)$ is valid. The proof is by induction. Clearly $P(0)$ is valid by choosing $\ca^*=\ca$. Let $i\in [p]$ and suppose $P(i-1)$ is true. Then $\ca$ is equivalent to a path $\ca^*$ such that the first $i-1$ switching pairs of $\ca^*$ are equal to the first $i-1$ switching pairs of $\cb$. 
By the implication $(\Rightarrow)$, it follows that $\ca^*_{\{a,b,c\}}=\ca_{\{a,b,c\}}=\cb_{\{a,b,c\}}$ for all distinct $a,b,c\in A$.
Let the sequences of switching pairs for $\ca^*$ and $\cb$ be
    $$S(\ca^*):(a_1,b_1),\ldots, (a_p,b_p),$$
    $$S(\cb):(c_1,d_1),\ldots, (c_p,d_p).$$
If $\{a_i,b_i\}= \{c_i,d_i\}$, then obviously $P(i)$ is true. Suppose $\{a_i,b_i\}\ne \{c_i,d_i\}$. There are two cases.

    

    {\bf Case 1.} Suppose $\{a_i,b_i\}\cap \{c_i,d_i\}=\emptyset$. We will continuously check the switching pairs of alternatives that are after $(a_{i},b_{i})$ in $S(\ca^*)$. Note that if $\{a_{q},b_{q}\}\cap\{c_i,d_i\}=\emptyset$ for all $q>i$, then the number of occurrences of $(c_i,d_i)$ in $S(\ca^*)$ would not be the same as it in $S(\cb)$, which contradicts the earlier fact that the multisets of switching pairs from $S(\ca^*)$ and $S(\cb)$ the same.  
    Let
    $$j=min\{r\in\{i+1,\ldots,p\}:\{a_r,b_r\}\cap\{c_i,d_i\}\ne\emptyset\}.$$

    {\bf Case 1.1.} Suppose $|\{a_{j},b_{j}\}\cap\{c_i,d_i\}|=1$, write $\{a_{j},b_{j}\}\cup\{c_i,d_i\}=\{x,y,z\}$. 
    Note that the contribution of $S(\ca^*)$ from the first $i-1$ pairs to $S(\ca^*_{\{x,y,z\}})$ is the same as the contribution of $S(\cb)$ from the first $i-1$ pairs to $S(\cb_{\{x,y,z\}})$ by assumption. However, the contribution of $S(\ca^*)$ between the $i$-th pair and the $j$-th pair (including the ends) to $S(\ca^*_{\{x,y,z\}})$ is starting with $(a_j,b_j)$, but the contribution of $S(\cb)$ between the $i$-th pair and the $j$-th pair to $S(\cb_{\{x,y,z\}})$ is starting with $(c_i,d_i)$.
    Hence, $S(\ca^*_{\{x,y,z\}})\ne S(\cb_{\{x,y,z\}})$. This is a contradiction, and so this case is not possible.
    
    {\bf Case 1.2.} Suppose $\{a_{j},b_{j}\}=\{c_i,d_i\}$. Then since  $\{c_i,d_i\}\cap \{a_r,b_r\}=\emptyset$ for all $i\leq r<j$, we have $\{a_j,b_j\}\cap \{a_r,b_r\}=\emptyset$. Hence, we can move $(a_j,b_j)$ to the position immediately before $(a_i,b_i)$ in $S(\ca^*)$. 
    This means that $\ca^*$ is equivalent to a path $\ca^{ }$ such that the first $i$ switching pairs of $\ca^{ }$ are equal to the first $i$ switching pairs of $\cb$. Therefore, $P(i)$ is valid in this case.
    
    {\bf Case 2.} Suppose $|\{a_i,b_i\}\cap \{c_i,d_i\}|=1$. This is similar to Case 1.1 and this case is not possible.

    Thus, the implication $(\Leftarrow)$ follows by induction.
\end{proof}

\section{Lemmas for enriching peak-pit Condorcet domains}

We will show that for every two linear orders $R$ and $T$ in a peak-pit Condorcet domain, the union of the domain and the set of linear orders from a geodesic connecting $R$ and $T$ will remain a peak-pit Condorcet domain. In particular, we will show that this is true for peak-pit Condorcet domains restricted to a triple or a quadruple of alternatives.


\begin{lemma}\label{wex1.2} Suppose $\cd\subseteq \cl(A)$ is a peak-pit Condorcet domain. 
\begin{tabel}
    \item\label{wex1.2a} Let $a,b,c\in A$ be distinct alternatives. If the linear orders $abc$ and $cab$ are both in $\cd_{\{a,b,c\}}$, then $\cd_{\{a,b,c\}}\cup\{acb\}$ is a peak-pit Condorcet domain. 
If the linear orders $abc$ and $bca$ are both in $\cd_{\{a,b,c\}}$, then $\cd_{\{a,b,c\}}\cup\{bac\}$ is a peak-pit Condorcet domain. 
\item\label{wex1.2b} 
Let $a,b,c\in A$ be distinct and $R,T\in \cd_{\{a,b,c\}}$. Suppose $R$ and $T$  differ by at most two swaps of adjacent alternatives. Then there is exactly one geodesic $\ca$ on $\cl(\{a,b,c\})$ connecting them. In addition, $\cd_{\{a,b,c\}}\cup K(\ca)$ is a peak-pit Condorcet domain. 
\item\label{wex1.2c} Let $a,b,c\in A$ be distinct and $R,T\in \cd_{\{a,b,c\}}$. Suppose $R$ and $T$  differ by three swaps of adjacent alternatives. Then there are exactly two geodesics on $\cl(\{a,b,c\})$ connecting them, and for at least one of these two geodesics, $\ca$, we have  $\cd_{\{a,b,c\}}\cup K(\ca)$ is a peak-pit Condorcet domain.
\item\label{wex1.2d} For all distinct $a,b,c\in A$ and $R,T\in \cd_{\{a,b,c\}}$, there is a geodesic $\ca$ connecting $R$ and $T$ such that $\cd_{\{a,b,c\}}\cup\ K(\ca)$ is a peak-pit Condorcet domain.
\end{tabel}
\end{lemma}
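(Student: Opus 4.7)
The plan is to address each of the four parts in turn, using direct case analysis supported by the dichotomy results already established.

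For part (a), I would identify, for the two given hypotheses, the complete list of peak-pit conditions that a domain containing the specified pair of orders can possibly satisfy. For $abc,cab \in \cd_{\{a,b,c\}}$, the only peak-pit conditions not immediately violated are the never-top condition $bN_{\{a,b,c\}}1$ and the never-bottom condition $aN_{\{a,b,c\}}3$; so $N_p(\cd_{\{a,b,c\}})$ must be a nonempty subset of these two. Then observe directly that $acb$ places $b$ last and $a$ first, so it respects both of these conditions. Hence adding $acb$ preserves whichever peak-pit condition $\cd_{\{a,b,c\}}$ satisfies, so $\cd_{\{a,b,c\}}\cup\{acb\}$ is still peak-pit. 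The statement for $abc,bca,bac$ follows by the same enumeration (the two compatible conditions there are $bN_{\{a,b,c\}}3$ and $cN_{\{a,b,c\}}1$, and $bac$ respects both).

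For part (b), I would split on the Kendall tau distance between $R$ and $T$. If $R=T$ or they differ by one swap then the geodesic is trivially unique and $K(\ca)\subseteq\cd_{\{a,b,c\}}$, so there is nothing to add. If they differ by exactly two swaps, enumerating the possibilities in $\cl(\{a,b,c\})$ (for instance $R=abc,T=bca$ or $R=abc,T=cab$) shows that in each case only one of the two intermediate candidates is obtained by a valid adjacent swap in $R$, yielding a unique geodesic with a single intermediate linear order $M$. Moreover, $(R,T)$ is, up to symmetry, one of the pairs appearing in (a) and $M$ is exactly the order added in (a), so $\cd_{\{a,b,c\}}\cup K(\ca)=\cd_{\{a,b,c\}}\cup\{M\}$ is peak-pit by part (a).

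For part (c), the distance-three case means $\{R,T\}=\{abc,cba\}$ up to renaming, and by Lemma~\ref{2geodesics} the only two geodesics are $\ca_1=(abc,bac,bca,cba)$ and $\ca_2=(abc,acb,cab,cba)$, with $K(\ca_1)$ satisfying only $bN_{\{a,b,c\}}3$ and $K(\ca_2)$ satisfying only $bN_{\{a,b,c\}}1$. Since $abc,cba\in\cd_{\{a,b,c\}}$, Lemma~\ref{wex0}\ref{wex0a} gives $\emptyset\neq N_p(\cd_{\{a,b,c\}})\subseteq\{bN_{\{a,b,c\}}1,bN_{\{a,b,c\}}3\}$. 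If $bN_{\{a,b,c\}}3\in N_p(\cd_{\{a,b,c\}})$, then $\cd_{\{a,b,c\}}\subseteq\{abc,bac,bca,cba\}=K(\ca_1)$, so $\cd_{\{a,b,c\}}\cup K(\ca_1)=K(\ca_1)$ is peak-pit; the symmetric argument with $\ca_2$ handles the other case. Thus at least one of the two geodesics has the required property.

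Part (d) then follows immediately by combining (b) and (c): any two linear orders in $\cl(\{a,b,c\})$ differ by at most three swaps of adjacent alternatives. The bulk of the work is the bookkeeping in (a), where the main thing to get right is identifying exactly which peak-pit conditions are compatible with each given pair of orders; I do not expect any serious obstacle beyond careful case enumeration.
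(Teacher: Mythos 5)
Your proposal is correct and follows essentially the same route as the paper's proof: part (a) by checking that the added order is compatible with every peak-pit condition the given pair of orders allows, part (b) by a case split on the Kendall tau distance reducing the distance-two case to (a), part (c) via Lemma~\ref{wex0}\ref{wex0a} together with the two geodesics of Lemma~\ref{2geodesics}, and part (d) by combining (b) and (c). The only cosmetic differences are that you pin down the two surviving conditions explicitly in (a) (where the paper splits on never-top versus never-bottom) and argue (c) through the containment $\cd_{\{a,b,c\}}\subseteq K(\ca_i)$ rather than verifying directly that the union still satisfies the relevant condition.
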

\begin{proof}
(a) Suppose $abc$ and $cab$ are linear orders in $\cd_{\{a,b,c\}}$. 
Since $\cd$ is a peak-pit Condorcet domain, the restricted domain $\cd_{\{a,b,c\}}$ is also a peak-pit Condorcet domain. Hence, $\cd_{\{a,b,c\}}$ is a never-top domain or a never-bottom domain.

{\bf Case 1.} Suppose $\cd_{\{a,b,c\}}$ is a never-top domain. Then there is an $x\in \{a,b,c\}$ such that $\cd_{\{a,b,c\}}$ satisfies the never-condition $xN_{\{a,b,c\}}1$. Since $abc\in \cd_{\{a,b,c\}}$, it follows that $x\ne a$.
Then $\cd_{\{a,b,c\}}\cup\{acb\}$ also satisfies the never-condition $xN_{\{a,b,c\}}1$, and so $\cd_{\{a,b,c\}}\cup\{acb\}$ is a peak-pit Condorcet domain. 

{\bf Case 2.} Suppose $\cd_{\{a,b,c\}}$ is a never-bottom domain. Then there is an $x\in \{a,b,c\}$ such that $\cd_{\{a,b,c\}}$ satisfies the never-condition $xN_{\{a,b,c\}}3$. Since $cab\in \cd_{\{a,b,c\}}$, it follows that $x\ne b$.
Then $\cd_{\{a,b,c\}}\cup\{acb\}$ also satisfies the never-condition $xN_{\{a,b,c\}}3$, and so $\cd_{\{a,b,c\}}\cup\{acb\}$ is a peak-pit Condorcet domain. 

The second statement follows from the first and the substitution $a\mapsto b,b\mapsto c,c\mapsto a$.\\

    (b) Let $a,b,c$ be distinct alternatives in $A$. Note that there are only six linear orders in $\cl({\{a,b,c\}})$, as shown in Figure~\ref{g3paths}.
$$\cl({\{a,b,c\}})=\{abc,bac,bca,cba,cab,acb\}.$$ 

Without loss of generality, suppose $R=abc \in \cd_{\{a,b,c\}}$.
We consider three cases.

{\bf Case 1.} Suppose $T=R=abc$. Then $(R)$ is the only geodesic on $\cl(\{a,b,c\})$ connecting $R$ and $T$.

{\bf Case 2.} Suppose $T\in\{bac,acb\}$. Note that $R$ and $T$ differ by one swap of adjacent alternatives. Then $(R,T)$ is the only geodesic on $\cl(\{a,b,c\})$ connecting $R$ and $T$.

{\bf Case 3.} Suppose $T\in\{bca,cab\}$. Note that $R$ and $T$ differ by two swaps of adjacent alternatives. 
Suppose $T=bca$. Then $(abc,bac,bca)$ is a path connecting $R$ and $T$ and obviously it is the only geodesic on $\cl(\{a,b,c\})$ connecting $R$ and $T$. 
In addition, $\cd_{\{a,b,c\}}\cup\{bac\}$ is a peak-pit Condorcet domain by Statement~\ref{wex1.2a}.
The case for $T=cab$ is similar.\\

(c) Without loss of generality, suppose $R=abc \in \cd_{\{a,b,c\}}$. Since $R$ and $T$ differ by three swaps of adjacent alternatives, we have $T=cba$. 
Inspecting Figure~\ref{g3paths}, we see that there are exactly 2 geodesics on $\cl(\{a,b,c\})$ connecting $abc$ and $cba$, which are
$\ca_1=(abc,acb,cab,cba)$ and $\ca_2=(abc,bac,bca,cba)$.
Since $abc, cba\in \cd_{\{a,b,c\}}$ and $\cd$ is a peak-pit Condorcet domain, the only peak-pit conditions that $\cd_{\{a,b,c\}}$ satisfies are $bN_{\{a,b,c\}}1$ or $bN_{\{a,b,c\}}3$ by Lemma~\ref{wex0}\ref{wex0a}.
Suppose $\cd_{\{a,b,c\}}$ satisfies $bN_{\{a,b,c\}}1$. Then $\ca_1=(abc,acb,cab,cba)$ is a geodesic connecting $abc$ and $cba$ such that $\cd_{\{a,b,c\}}\cup \{acb,cab\}$ is a peak-pit Condorcet domain because it still satisfies $bN_{\{a,b,c\}}1$ but no longer $bN_{\{a,b,c\}}3$.
Similarly, if $\cd_{\{a,b,c\}}$ satisfies $bN_{\{a,b,c\}}3$, then $\ca_2=(abc,bac,bca,cba)$ is a geodesic connecting $abc$ and $cba$ such that $\cd_{\{a,b,c\}}\cup \{bac,bca\}$ is a peak-pit Condorcet domain because it still satisfies $bN_{\{a,b,c\}}3$. It does not satisfy, however, the condition $bN_{\{a,b,c\}}1$.\\

(d) This follows immediately from Statements~\ref{wex1.2b} and \ref{wex1.2c}.
\end{proof}

The second statement of the above lemma states that if two linear orders for three alternatives differ by at most two swaps of adjacent alternatives, then there is a unique geodesic connecting them. 
However, the third statement states that if they differ by exactly three swaps of adjacent alternatives, there could be potentially two geodesics connecting them, each satisfying different peak-pit conditions.
Hence, the construction of a geodesic connecting $R$ and $T$ to ensure the directly connectedness of $\cd$ will not rely on using different restricted geodesics. 
The problem of inconstructibility arises when trying to construct a geodesic connecting $R$ and $T$ from restricted geodesics connecting $R_B$ and $T_B$ but such restricted geodesics could satisfy different peak-pit conditions.

\begin{lemma}\label{wex1.3} Suppose $\cd\subseteq \cl(A)$ is a peak-pit Condorcet domain. 
\begin{tabel}
\item\label{wex1.3a} Let $a,b,c\in A$ be distinct alternatives, and $R=\cdots abc \cdots$ and $T=\cdots cab \cdots$ be linear orders in $\cd$ with $R_{A\setminus\{a,b\}}=T_{A\setminus\{a,b\}}$. Suppose $S=\cdots acb \cdots$ is a linear order in $\cl(A)$ with $S_{A\setminus\{a,b\}}=R_{A\setminus\{a,b\}}$.
Then $\cd\cup\{S\}$ is a peak-pit Condorcet domain.
\item\label{wex1.3b} Let $a,b,c,d\in A$ be distinct alternatives. For then let $R=\cdots ab \cdots cd\cdots$ and $T=\cdots ba \cdots dc\cdots$ be linear orders in $\cd$ with  $R_{A\setminus\{b,d\}}=T_{A\setminus\{b,d\}}$. Suppose $S^1=\cdots ba \cdots cd\cdots$ and $S^2=\cdots ab \cdots dc\cdots$ are linear orders in $\cl(A)$ with $R_{A\setminus\{b,d\}}=T_{A\setminus\{b,d\}}=S^1_{A\setminus\{b,d\}}=S^2_{A\setminus\{b,d\}}$. Then $\cd\cup\{S_1, S_2\}$ is also a peak-pit Condorcet domain and satisfies the same set of never-conditions as $\cd$.
In particular, $\cd$, $\cd\cup\{S_1\}$ and $\cd\cup\{S_2\}$ are three peak-pit Condorcet domains which satisfy the same never-conditions.
\item\label{wex1.3c} Let $\ca$ and $\cb$ be equivalent paths such that $\cd\cup K(\ca)$ is a peak-pit Condorcet domain. Then $\cd\cup K(\cb)$ is also a peak-pit Condorcet domain and satisfies the same set of never-conditions as $\cd\cup K(\ca)$.
\end{tabel}
\end{lemma}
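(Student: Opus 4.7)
The three parts share a common strategy: the new linear orders $S$ in (a) and $S^1,S^2$ in (b) differ from $R,T\in\cd$ only by transpositions of one or two pairs of adjacent alternatives, so on almost every triple they restrict to an order already contributed by $R$ or $T$. For part (a), I would first observe that in $R=\cdots abc\cdots$ and $S=\cdots acb\cdots$ every alternative outside $\{b,c\}$ sits at the same position while $b$ and $c$ exchange places. For any triple $\{x,y,z\}\ne\{a,b,c\}$, either the triple avoids $b$ or avoids $c$, giving $S_{\{x,y,z\}}=R_{\{x,y,z\}}$; or the triple equals $\{b,c,y\}$ with $y\ne a$, in which case a direct position check (using that $T_{A\setminus\{a,b\}}=R_{A\setminus\{a,b\}}$ forces $c$ to occupy the same slot relative to $A\setminus\{a,b,c\}$ in all three orders) shows $S_{\{b,c,y\}}=T_{\{b,c,y\}}$. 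In either case the restriction of $S$ already lies in $\cd_{\{x,y,z\}}$. The only remaining triple is $\{a,b,c\}$, where $abc,cab\in\cd_{\{a,b,c\}}$ and $S_{\{a,b,c\}}=acb$, so Lemma~\ref{wex1.2}\ref{wex1.2a} shows $\cd_{\{a,b,c\}}\cup\{acb\}$ is peak-pit; combining the cases yields that $\cd\cup\{S\}$ is peak-pit.

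For part (b), the orders $S^1$ and $S^2$ correspond to performing only one of the two independent adjacent swaps, $(a,b)$ or $(c,d)$, that take $R$ to $T$. The hypothesis $R_{A\setminus\{b,d\}}=T_{A\setminus\{b,d\}}$ forces $a$ and $c$ to occupy identical positions in $R$ and $T$, so all four orders $R,T,S^1,S^2$ agree on $A\setminus\{b,d\}$ and differ only on the two pairs $(a,b)$ and $(c,d)$. I would then check, by casework on $|\{x,y,z\}\cap\{b,d\}|$ further subdivided by whether $a$ or $c$ lies in the triple, that $S^1_{\{x,y,z\}},S^2_{\{x,y,z\}}\in\{R_{\{x,y,z\}},T_{\{x,y,z\}}\}$ in every situation. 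Consequently $(\cd\cup\{S^1,S^2\})_{\{x,y,z\}}=\cd_{\{x,y,z\}}$ on every triple, so enlarging $\cd$ by $S^1$ and $S^2$ preserves both the peak-pit property and the full set of never-conditions. The \emph{in particular} clause then follows from the chain $\cd\subseteq\cd\cup\{S^i\}\subseteq\cd\cup\{S^1,S^2\}$ together with the antitonicity of $N(\cdot)$ under set inclusion.

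For part (c), by transitivity of path equivalence it suffices to treat the case where $\ca$ and $\cb$ differ by exactly one swap of an adjacent disjoint pair $(a,b),(c,d)$. Writing $\ca=(R_1,\dots,R_k)$ with the differing position at index $i+1$, one has $\cb=(R_1,\dots,R_i,R'_{i+1},R_{i+2},\dots,R_k)$ for some $R'_{i+1}$, and the orders $R_i,R_{i+2},R_{i+1},R'_{i+1}$ fit exactly the hypothesis of part (b) (taking $R=R_i$, $T=R_{i+2}$, $S^1=R_{i+1}$, $S^2=R'_{i+1}$). Applying (b) to the peak-pit domain $\cd\cup K(\ca)$ shows that $\cd\cup K(\ca)\cup\{R'_{i+1}\}$ is peak-pit with the same never-conditions as $\cd\cup K(\ca)$. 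Since $\cd\cup K(\cb)\subseteq\cd\cup K(\ca)\cup\{R'_{i+1}\}$, it is peak-pit; exchanging the roles of $\ca$ and $\cb$ and applying (b) again to the now peak-pit domain $\cd\cup K(\cb)$ yields the reverse inclusion of never-condition sets and hence equality. The main obstacle will be the systematic triple-by-triple bookkeeping in part (b): the two independent swaps can interact in several subtly different ways depending on which of $a,b,c,d$ appear in a given triple, and although each individual check is straightforward, keeping them all organised is delicate. Once this positional analysis is laid out cleanly, (a) falls out as a one-swap specialisation closed off by Lemma~\ref{wex1.2}\ref{wex1.2a}, and (c) reduces immediately to (b) via path equivalence.
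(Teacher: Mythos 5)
Your proposal is correct and follows essentially the same route as the paper: a triple-by-triple restriction analysis showing $S$ (resp.\ $S^1,S^2$) restricts to $R_{\{p,q,r\}}$ or $T_{\{p,q,r\}}$ on every triple, with Lemma~\ref{wex1.2}\ref{wex1.2a} handling the triple $\{a,b,c\}$ in (a), and a reduction of (c) to (b) via a single swap of an adjacent disjoint pair. The only cosmetic difference is in (c), where you apply (b) to $\cd\cup K(\ca)$ and argue by symmetry, whereas the paper applies (b) to the intermediate domain $\cd^*=\cd\cup(K(\ca)\setminus\{R_i\})$ to get equality of never-conditions in one step.
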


\begin{proof}

(a) Let $R=\cdots abc \cdots$ and $T=\cdots cab \cdots$ be linear orders in $\cd$ with $R_{A\setminus\{a,b\}}=T_{A\setminus\{a,b\}}$. Let $S=\cdots acb \cdots$ be such that $S_{A\setminus\{a,b\}}=R_{A\setminus\{a,b\}}$. 
Let $p,q,r$ be a triple of distinct alternatives. We shall show that $(\cd\cup \{S\})_{\{p,q,r\}}$ is a peak-pit Condorcet domain.

{\bf Case 1.} Suppose $\{p,q,r\}\cap \{a,b,c\}=\emptyset$.  Since $S_{A\setminus\{a,b\}}=R_{A\setminus\{a,b\}}$ by assumption, we have $S_{\{p,q,r\}}=(S_{A\setminus\{a,b\}})_{\{p,q,r\}}=(R_{A\setminus\{a,b\}})_{\{p,q,r\}}=R_{\{p,q,r\}}$. Hence, $(\cd\cup \{S\})_{\{p,q,r\}}=\cd_{\{p,q,r\}}$ is a peak-pit Condorcet domain.

{\bf Case 2.} Suppose $|\{p,q,r\}\cap \{a,b,c\}|=1$. Since $S_{A\setminus\{a,b\}}=R_{A\setminus\{a,b\}}$ by assumption and $a,b,c$ are next to each other in $R$ and $S$, we have $S_{A\setminus\{a,c\}}=R_{A\setminus\{a,c\}}$ and 
$S_{A\setminus\{b,c\}}=R_{A\setminus\{b,c\}}$.
Let $\{x,y\}=\{a,b,c\}\setminus \{p,q,r\}$. Then
$S_{\{p,q,r\}}=(S_{A\setminus\{x,y\}})_{\{p,q,r\}}=(R_{A\setminus\{x,y\}})_{\{p,q,r\}}=R_{\{p,q,r\}}$.
Hence, $(\cd\cup \{S\})_{\{p,q,r\}}=\cd_{\{p,q,r\}}$ is a peak-pit Condorcet domain.

{\bf Case 3.} Suppose $|\{p,q,r\}\cap \{a,b,c\}|=2$. Let $\{x\}=\{p,q,r\}\setminus \{a,b,c\}$. 
There are three subcases for $\{p,q,r\}$. If $\{p,q,r\}=\{a,b,x\}$, then since ${S}_{\{a,b,x\}}={R}_{\{a,b,x\}}$, we have $(\cd\cup \{S\})_{\{a,b,x\}}=\cd_{\{a,b,x\}}$.
If $\{p,q,r\}=\{a,c,x\}$, then since ${S}_{\{a,c,x\}}={R}_{\{a,c,x\}}$, we have $(\cd\cup \{S\})_{\{a,c,x\}}=\cd_{\{a,c,x\}}$.
If $\{p,q,r\}=\{b,c,x\}$, then since ${S}_{\{b,c,x\}}={T}_{\{b,c,x\}}$, we have $(\cd\cup \{S\})_{\{b,c,x\}}=\cd_{\{b,c,x\}}$.
Thus, $(\cd\cup \{S\})_{\{p,q,r\}}=\cd_{\{p,q,r\}}$ is a peak-pit Condorcet domain.

{\bf Case 4.} Suppose $\{p,q,r\}=\{a,b,c\}$. This was shown earlier in Lemma~\ref{wex1.2}\ref{wex1.2a} that  $(\cd\cup \{S\})_{\{a,b,c\}}$ is a peak-pit Condorcet domain. 

Hence, $\cd\cup \{S\}$ is a peak-pit Condorcet domain.\\

(b) Let $R=\cdots ab \cdots cd\cdots$ and $T=\cdots ba \cdots dc\cdots$ be linear orders in $\cd$. Let $S^1=\cdots ba \cdots cd\cdots$ and $S^2=\cdots ab \cdots dc\cdots$ be such that $R_{A\setminus\{b,d\}}=T_{A\setminus\{b,d\}}=S^1_{A\setminus\{b,d\}}=S^2_{A\setminus\{b,d\}}$.
Let $p,q,r$ be a triple of distinct alternatives. We shall show that $S^i_{\{p,q,r\}}$ is either equal to $R_{\{p,q,r\}}$ or $T_{\{p,q,r\}}$ for both $i=1$ and $i=2$. That implies both $(\cd\cup \{S^1\})_{\{p,q,r\}}$ and $(\cd\cup \{S^2\})_{\{p,q,r\}}$ are peak-pit Condorcet domains satisfying the same set of never-conditions as $\cd_{\{p,q,r\}}$. 

{\bf Case 1.} Suppose $\{p,q,r\}\cap \{a,b,c,d\}=\emptyset$.  Since $R_{A\setminus\{b,d\}}=S^i_{A\setminus\{b,d\}}$ by assumption, $S^i_{\{p,q,r\}}=(S^i_{A\setminus\{b,d\}})_{\{p,q,r\}}=(R_{A\setminus\{b,d\}})_{\{p,q,r\}}=R_{\{p,q,r\}}$ for both $i=1$ and $i=2$.
Hence, both $(\cd\cup \{S^1\})_{\{p,q,r\}}$ and $(\cd\cup \{S^2\})_{\{p,q,r\}}$ are peak-pit Condorcet domains satisfying the same set of never-conditions as $\cd_{\{p,q,r\}}$.

{\bf Case 2.} Suppose $|\{p,q,r\}\cap \{a,b,c,d\}|=1$.  Let $\{x,y,z\}=\{a,b,c,d\}\setminus \{p,q,r\}$. Since $R_{A\setminus\{b,d\}}=S^i_{A\setminus\{b,d\}}$ and each pair of $\{a,b\}$ and $\{c,d\}$ is next to each other in $R$ and $S^i$,  we have $S^i_{A\setminus\{a,c\}}=R_{A\setminus\{a,c\}}$, $S^i_{A\setminus\{a,d\}}=R_{A\setminus\{a,d\}}$ and 
$S^i_{A\setminus\{b,c\}}=R_{A\setminus\{b,c\}}$ for both $i=1$ and $i=2$.
Thus, we have $ S^i_{A\setminus\{x,y,z\}}=R_{A\setminus\{x,y,z\}}$ for both $i=1$ and $i=2$.
Then $S^i_{\{p,q,r\}}=(S^i_{A\setminus\{x,y,z\}})_{\{p,q,r\}}=(R_{A\setminus\{x,y,z\}})_{\{p,q,r\}}=R_{\{p,q,r\}}$ for both $i=1$ and $i=2$.
Hence, both $(\cd\cup \{S^1\})_{\{p,q,r\}}$ and $(\cd\cup \{S^2\})_{\{p,q,r\}}$ are peak-pit Condorcet domains satisfying the same set of never-conditions as $\cd_{\{p,q,r\}}$.

{\bf Case 3.} Suppose $|\{p,q,r\}\cap \{a,b,c,d\}|=2$. Let $\{x\}=\{p,q,r\}\setminus \{a,b,c,d\}$. If $\{p,q,r\}=\{a,b,x\}$, 
then $S^1_{\{p,q,r\}}=S^1_{\{a,b,x\}}={T}_{\{a,b,x\}}={T}_{\{p,q,r\}}$ and $S^2_{\{p,q,r\}}=S^2_{\{a,b,x\}}={R}_{\{a,b,x\}}={R}_{\{p,q,r\}}$.
If $\{p,q,r\}=\{c,d,x\}$, then $S^1_{\{p,q,r\}}=S^1_{\{c,d,x\}}={R}_{\{c,d,x\}}={R}_{\{p,q,r\}}$ and $S^2_{\{p,q,r\}}=S^2_{\{c,d,x\}}={T}_{\{c,d,x\}}={T}_{\{p,q,r\}}$. The other subcases here are covered by the assumption that $R_{A\setminus\{b,d\}}=S^1_{A\setminus\{b,d\}}=S^2_{A\setminus\{b,d\}}$. Precisely, we write $\{a,b\}\setminus\{p,q,r\}=\{m\}$ and $\{c,d\}\setminus\{p,q,r\}=\{n\}$.
Then $R_{A\setminus\{m,n\}}=S^i_{A\setminus\{m,n\}}$ and so
$S^i_{\{p,q,r\}}=(S^i_{A\setminus\{m,n\}})_{\{p,q,r\}}=(R_{A\setminus\{m,n\}})_{\{p,q,r\}}=R_{\{p,q,r\}}$ for both $i=1$ and $i=2$.
Hence, both $(\cd\cup \{S^1\})_{\{p,q,r\}}$ and $(\cd\cup \{S^2\})_{\{p,q,r\}}$ are peak-pit Condorcet domains satisfying the same set of never-conditions as $\cd_{\{p,q,r\}}$.

{\bf Case 4.} Suppose $\{p,q,r\} \subset \{a,b,c,d\}$. 
Suppose $\{a,b\}\subset \{p,q,r\}$, and we write $\{p,q,r\}=\{a,b,x\}$.
Then $S^1_{\{p,q,r\}}=S^1_{\{a,b,x\}}=T_{\{a,b,x\}}=T_{\{p,q,r\}}$ and
$S^2_{\{p,q,r\}}=S^2_{\{a,b,x\}}=R_{\{a,b,x\}}=R_{\{p,q,r\}}$.
Alternatively, if $\{c,d\}\subset \{p,q,r\}$, write $\{p,q,r\}=\{c,d,y\}$.
Then $S^1_{\{p,q,r\}}=S^1_{\{c,d,y\}}=R_{\{c,d,y\}}=R_{\{c,d,r\}}$ and
$S^2_{\{p,q,r\}}=S^2_{\{c,d,y\}}=T_{\{c,d,y\}}=T_{\{p,q,r\}}$.
Hence, both $(\cd\cup \{S^1\})_{\{p,q,r\}}$ and $(\cd\cup \{S^2\})_{\{p,q,r\}}$ are peak-pit Condorcet domains satisfying the same set of never-conditions as $\cd_{\{p,q,r\}}$.

Therefore, since $S^i_{\{p,q,r\}}$ is either equal to $R_{\{p,q,r\}}$ or $T_{\{p,q,r\}}$ for both $i=1$ and $i=2$, both $\cd\cup \{S^1\}$ and $\cd\cup \{S^2\}$ are peak-pit Condorcet domains satisfying the same set of never-conditions as $\cd$.\\

(c) Let $\ca=(R_1,\ldots, R_k)$ and $\cb=(T_1,\ldots, T_k)$ be two equivalent paths such that $\cd\cup\{R_1,\ldots,R_k\}$ is a peak-pit Condorcet domain. 
Under transitivity of equivalent paths, we may assume that $S(\ca)$ and $S(\cb)$ only differ by one swap of adjacent disjoint switching pairs, say $(a,b)$ and $(c,d)$. Then all linear orders in the same position in $\ca$ and $\cb$ are the same except for one position, say $R_i$ and $T_i$ for some $i\in[k]$. 
Write $R_{i-1}=T_{i-1}=\cdots ab \cdots cd\cdots$ and $R_{i+1}=T_{i+1}=\cdots ba \cdots dc\cdots$.
Without loss of generality, suppose $R_i=\cdots ba \cdots cd\cdots$ and $T_i=\cdots ab \cdots dc\cdots$.
By assumption, $\cd\cup\{R_1,\ldots, R_{i-1}, R_i,R_{i+1}\ldots, R_k\}$ is a peak-pit Condorcet domain.
Then $\cd^*=\cd\cup\{R_1,\ldots, R_{i-1},R_{i+1}\ldots, R_k\}$ is also a peak-pit Condorcet domain.
Using Statement~\ref{wex1.3b} with $\cd^*$ instead of $\cd$, it follows that $\cd^*$, $\cd^*\cup\{R_i\}$ and $\cd^*\cup\{T_i\}$ are three peak-pit Condorcet domains which satisfy the same set of never-conditions. 
Hence, $\cd\cup\{T_1,\ldots, T_{k}\}=\cd^*\cup\{T_i\}$ is a peak-pit Condorcet domain and it satisfies the same set of never-conditions as $\cd^*\cup\{R_i\}=\cd\cup\{R_1,\ldots,R_k\}$.
\end{proof}

\section{Lemmas for swap closure on a geodesic}
Now we investigate some properties of equivalent geodesics, specifically how one can be transferred to another when they differ by more than one swap of adjacent disjoint pairs.
Consider the two equivalent geodesics shown in Figure~\ref{2longswap2}, where we swap the disjoint switching pairs \((h,w)\) and \((d,e)\). However, since $(d,e)$ and $(h,w)$ are not adjacent, we only just swap \((d,e)\) with \((h,w)\) but also all switching pairs between them, namely \((h,a)\), \((h,b)\), and \((a,b)\).
In order for the swapping to be possible, we need \((d,e)\) to be disjoint from all of these pairs. Moreover, note that while the switching pairs \((h,a)\) and  \((h,b)\) are not disjoint from \((h,w)\), the switching pair \((a,b)\) is not disjoint from these switching pairs that are not disjoint from \((h,w)\).

\begin{figure}[H]
\begin{subfigure}[b]{0.5\textwidth}
     \centering
\begin{tikzpicture}[scale=0.28]

\draw[fill=black] (0,0) circle (3pt);
\draw[fill=black] (0,2) circle (3pt);
\draw[fill=black] (0,4) circle (3pt);
\draw[fill=black] (0,6) circle (3pt);
\draw[fill=black] (0,8) circle (3pt);
\draw[fill=black] (0,10) circle (3pt);
\draw[fill=black] (19,0) circle (3pt);
\draw[fill=black] (19,2) circle (3pt);
\draw[fill=black] (19,4) circle (3pt);
\draw[fill=black] (19,6) circle (3pt);
\draw[fill=black] (19,8) circle (3pt);
\draw[fill=black] (19,10) circle (3pt);
 
\node at (-0.5,0) {$e$};
\node at (-0.5,2) {$d$};
\node at (-0.5,4) {$w$};
\node at (-0.5,6) {$b$};
\node at (-0.5,8) {$a$};
\node at (-0.5,10) {$h$};
\node at (19.5,0) {$d$};
\node at (19.5,2) {$h$};
\node at (19.5,4) {$e$};
\node at (19.5,6) {$w$};
\node at (19.5,8) {$a$};
\node at (19.5,10) {$b$};

\node at (4,1) {$(d,e)$};
\node at (9,7) {$(h,a)$};
\node at (3,7) {$(a,b)$};
\node at (16,5) {$(h,w)$};
\node at (6,9) {$(h,b)$};

\draw[thick] (0,0) -- (1,0)--(3,2)--(16,2)--(18,4) --(19,4);
\draw[thick] (0,2)--(1,2)--(3,0)--(6,0)--(19,0);
\draw[ultra thick] (0,4)--(13,4)--(15,6)--(16,6)--(18,6)--(19,6);
\draw[thick] (0,6)--(4,6)--(6,8)--(7,8)--(9,10)--(19,10);
\draw[thick] ((0,8)--(4,8)--(6,6)--(10,6)--(12,8)--(16,8)--(18,8)--(19,8);
\draw[thick] (0,10)--(7,10)--(9,8)--(10,8)--(12,6)--(13,6)--(15,4)--(16,4)--(18,2)--(19,2);

\end{tikzpicture}
\caption{$(d,e)$ appears before $(h,w)$ in $\cg$.}\label{}
 \end{subfigure}
 \begin{subfigure}[b]{0.5\textwidth}
     \centering
\begin{tikzpicture}[scale=0.28]

\draw[fill=black] (0,0) circle (3pt);
\draw[fill=black] (0,2) circle (3pt);
\draw[fill=black] (0,4) circle (3pt);
\draw[fill=black] (0,6) circle (3pt);
\draw[fill=black] (0,8) circle (3pt);
\draw[fill=black] (0,10) circle (3pt);
\draw[fill=black] (19,0) circle (3pt);
\draw[fill=black] (19,2) circle (3pt);
\draw[fill=black] (19,4) circle (3pt);
\draw[fill=black] (19,6) circle (3pt);
\draw[fill=black] (19,8) circle (3pt);
\draw[fill=black] (19,10) circle (3pt);
 
\node at (-0.5,0) {$e$};
\node at (-0.5,2) {$d$};
\node at (-0.5,4) {$w$};
\node at (-0.5,6) {$b$};
\node at (-0.5,8) {$a$};
\node at (-0.5,10) {$h$};
\node at (19.5,0) {$d$};
\node at (19.5,2) {$h$};
\node at (19.5,4) {$e$};
\node at (19.5,6) {$w$};
\node at (19.5,8) {$a$};
\node at (19.5,10) {$b$};

\node at (16,1) {$(d,e)$};
\node at (10,7) {$(h,a)$};
\node at (4,7) {$(a,b)$};
\node at (13,5) {$(h,w)$};
\node at (7,9) {$(h,b)$};

\draw[thick] (0,0) -- (1,0)--(13,0)--(15,2) --(16,2)--(18,4)--(19,4);
\draw[thick] (0,2)--(13,2)--(15,0)--(19,0);
\draw[ultra thick] (0,4)--(10,4)--(12,6)--(13,6)--(15,6)--(19,6);
\draw[thick] (0,6)--(1,6)--(3,8)--(4,8)--(6,10)--(19,10);
\draw[thick] ((0,8)--(1,8)--(3,6)--(7,6)--(9,8)--(13,8)--(15,8)--(19,8);
\draw[thick] (0,10)--(4,10)--(6,8)--(7,8)--(9,6)--(10,6)--(12,4)--(16,4)--(18,2)--(19,2);

\end{tikzpicture}
\caption{$(d,e)$ appears after $(h,w)$ in $\cg'$.}\label{}
 \end{subfigure}
     \caption{A case that switching pairs can be swapped for equivalent geodesics. } \label{2longswap2}
\end{figure}
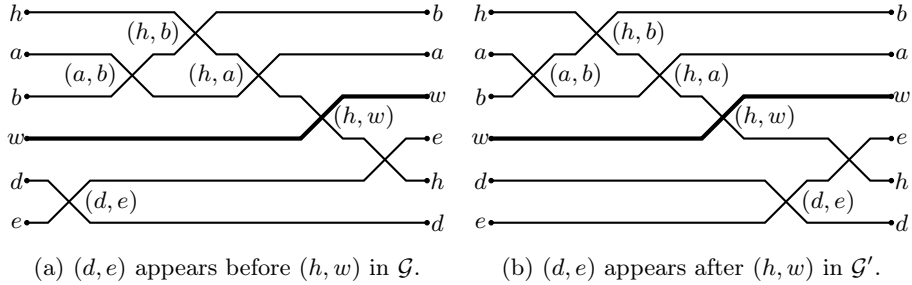 

Here, both $d$ and $e$ are not swapped with $w$. We need a lemma to generalise the idea that $(d,e)$ would be disjoint with each of those switching pairs $(h,a)$, $(h,b)$ and $(a,b)$. 

Let $\cg$ be a geodesic on $\cl(A)$ and $T(\cg)$ be {\bf the set of switching pairs in $S(\cg)$}.  Let $w\in A$ be an alternative that swaps in $\cg$ with at least one alternative in $A$. Let the {\bf swap set of $w$} on $\cg$ be defined by
$$H=\{h\in A\setminus\{w\}: (h,w)\in T(\cg)\}.$$
Define the {\bf swap closure of $w$} on $\cg$ by
\begin{align*}
    K_1=\{(a,b)\in T(\cg):\;&\text{there exist}\; h\in H\; \text{and}\; u\in A\setminus{\{h\}}\;\text{such that}\;\\ &(h,u)\in T(\cg)\;\text{and} \\
    & (a,b)\trianglelefteq (h,u)\trianglelefteq (h,w)\;\text{and}\;  \\
    & \{a,b\}\cap\{h,u\}\ne\emptyset\}.
\end{align*}

The swap closure of an alternative $w$ includes all switching pairs that either are not disjoint from any switching pair involving $w$, or are not disjoint from any switching pairs that are themselves not disjoint from a switching pair involving $w$.
For example, the switching pairs $(h,b), (h,a)$ and $(h,w)$, as in Figure~\ref{2longswap2}(a), are not disjoint from $(h,w)$. They are all in the swap closure of $w$.
Moreover, the switching pair $(a,b)$, as in Figure~\ref{2longswap2}(a), are not disjoint from $(h,a)$ that is itself not disjoint from $(h,w)$. So $(a,b)$ is also in the swap closure of $w$.

\begin{lemma}\label{conpropl1}
Let $\cg$ be a geodesic on $\cl(A)$ and $T(\cg)$ be the set of switching pairs in $S(\cg)$.  Let $w\in A$ be an alternative that swaps in $\cg$ with at least one alternative in $A$. Let $K_1$ be the swap closure of $w$ on $\cg$ and $K_2=T(\cg)\setminus K_1.$
Then each switching pair in $K_1$ is disjoint from each switching pair in $K_2$ which appears before it in $S(\cg)$.
\end{lemma}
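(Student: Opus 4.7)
The plan is to argue by contradiction. I would assume there exist $(a,b)\in K_1$ and $(c,d)\in K_2$ with $(c,d)\vartriangleleft(a,b)$ and $\{a,b\}\cap\{c,d\}\ne\emptyset$, and aim to exhibit a witness showing $(c,d)\in K_1$, contradicting $(c,d)\in K_2$. First, I would fix a witness $(h,u)\in T(\cg)$ for $(a,b)\in K_1$ as given by the definition of the swap closure: $h\in H$, $(a,b)\trianglelefteq(h,u)\trianglelefteq(h,w)$, and $\{a,b\}\cap\{h,u\}\ne\emptyset$.

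Next, I would split on how $\{c,d\}$ meets $\{h,u\}$. In the easy case $\{c,d\}\cap\{h,u\}\ne\emptyset$, the same pair $(h,u)$ serves as a witness for $(c,d)$, since $(c,d)\vartriangleleft(a,b)\trianglelefteq(h,u)\trianglelefteq(h,w)$ by transitivity. Otherwise the disjointness forces $\{a,b\}=\{\alpha,\beta\}$ with $\alpha\in\{c,d\}\setminus\{h,u\}$ and $\beta\in\{h,u\}\setminus\{c,d\}$. If $\beta=h$, then $(a,b)$ together with $h':=h\in H$ provides the witness for $(c,d)$, since $(c,d)\trianglelefteq(a,b)\trianglelefteq(h,w)$ and $\alpha\in\{c,d\}\cap\{a,b\}$.

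The subtle case will be $\beta=u\ne h$. Writing $\{c,d\}=\{\alpha,\gamma\}$, if $\gamma=w$ then $\alpha\in H$ via $(\alpha,w)=(c,d)\in T(\cg)$ and $(c,d)$ itself is the witness. Otherwise $\gamma\notin\{\alpha,\beta,h,w\}$ is a fifth alternative, and a consistency analysis is needed. I would apply Lemma~\ref{2geodesics3} to the three-alternative restrictions $\cg_{\{\alpha,\beta,h\}}$, $\cg_{\{\beta,h,w\}}$, and $\cg_{\{\alpha,\beta,\gamma\}}$ to pin down the relative orders of $\{\alpha,\beta,\gamma,h,w\}$ in $R_1$ (up to a dichotomy). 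A transitivity check on the final linear order $R_k$, obtained by flipping in $R_1$ exactly the pairs of $T(\cg)$, would then reveal that the four given inversions $(\alpha,\gamma),(\alpha,\beta),(\beta,h),(h,w)$ cannot by themselves form a consistent inversion set: the would-be chain ``$h$ above $w$, $w$ above $\alpha$'' in $R_k$ combined with ``$\alpha$ above $h$'' (which is unflipped) yields a cycle, forcing at least one of $(\alpha,h),(\alpha,w)$ to lie in $T(\cg)$; analogously the cycle on $\gamma$ forces one of $(\gamma,h),(\gamma,w)$ to lie in $T(\cg)$. Each of these four possible auxiliary pairs yields a valid witness for $(c,d)\in K_1$: I would apply Lemma~\ref{2geodesics3} to the appropriate three-alternative restriction (for instance $\cg_{\{\alpha,h,w\}}$ when $(\alpha,h)$ is present, giving $(\alpha,h)\vartriangleleft(h,w)$) and use Lemma~\ref{conex0} to lift the order information back to $\cg$, so that the auxiliary pair is located before the relevant $(h',w)$ in $S(\cg)$ and serves as the desired witness.

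The main obstacle will be this last sub-case: both the transitivity-cycle argument that forces an auxiliary switching pair into $T(\cg)$, and the subsequent verification that it is correctly positioned in $S(\cg)$, will require repeated and careful invocation of Lemma~\ref{2geodesics3} on several three-alternative restrictions of $\cg$, together with Lemma~\ref{conex0} to pass between $\cg$ and its restrictions.
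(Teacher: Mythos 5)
Your initial reductions are sound and in fact parallel the first part of the paper's proof: the case $\{c,d\}\cap\{h,u\}\ne\emptyset$, the case $\beta=h$, and the case $\gamma=w$ are all handled correctly by exhibiting a witness for $(c,d)\in K_1$ (you do leave the easy sub-case $\beta=u=w$ unaddressed, and you never verify that a candidate auxiliary pair occurs \emph{after} $(c,d)$, which is needed for it to be a witness). The problem is the core of your ``subtle case''. The claim that a transitivity check forces at least one of $(\alpha,h),(\alpha,w)$, and at least one of $(\gamma,h),(\gamma,w)$, to lie in $T(\cg)$ is false. The cycle you describe needs $R_1$ to place $\alpha$ (resp.\ $\gamma$) strictly between $h$ and $w$, but none of the restrictions you invoke ($\cg_{\{\alpha,\beta,h\}}$, $\cg_{\{\beta,h,w\}}$, $\cg_{\{\alpha,\beta,\gamma\}}$) contains both $w$ and $\alpha$, or $w$ and $\gamma$, so the position of $w$ relative to $\alpha,\gamma$ is not pinned down at all. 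Concretely, take $A=\{\alpha,\gamma,u,w,h\}$ with $\beta=u$, $R_1=u\,\alpha\,\gamma\,w\,h$ and the swap sequence $(\alpha,\gamma),(u,\gamma),(u,\alpha),(u,w),(u,h),(w,h)$: this is a geodesic containing the four crossings with $(\alpha,\gamma)\vartriangleleft(\alpha,u)\vartriangleleft(h,u)\vartriangleleft(h,w)$, yet none of $(\alpha,h),(\alpha,w),(\gamma,h),(\gamma,w)$ is a switching pair. Here $(\alpha,\gamma)$ does belong to $K_1$, but only via the alternative $u=\beta$ (witness $(u,\gamma)$ with $u\in H$ through $(u,w)$), a route your argument never considers.

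This is not a repairable oversight within your plan: since configurations like the one above exist, the forced crossings you rely on cannot follow from the four given crossings plus restriction lemmas alone. A correct argument must bring the reductio hypothesis $(c,d)\notin K_1$ to bear, first ruling out witnesses through $\beta$ and through $\alpha$ itself (this is exactly what the paper does with its three impossible orderings, e.g.\ $(c,d)\vartriangleleft(\alpha,h)\vartriangleleft(h,w)$, $(c,d)\vartriangleleft(\alpha,w)$, and $(c,d)\vartriangleleft(\alpha,u)\vartriangleleft(u,w)$, each of which would directly place $(c,d)$ in $K_1$), and then showing by a case analysis on the linear order immediately after the swap $(c,d)$, restricted to $\{\alpha,h,u\}$ and then to $\{\alpha,h,u,w\}$, that every local configuration forces one of these forbidden orderings. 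Your proposal replaces this case analysis with a transitivity argument whose premises (``$w$ above $\alpha$'', ``$\alpha$ above $h$'' in $R_1$) are simply not available, so the proof does not go through as sketched.
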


\begin{proof}
Let $H$ be the swap set of $w$. Note that $H$ is not empty by the choice of $w$, Also, if $h\in H$, then $(h,w)\in K_1$ by choosing $u=w$ in the definition of $K_1$, and so $K_1$ is not empty.
Now if $K_2$ is empty, then we are done. 
Suppose $K_2\ne \emptyset$.
    Let $(a_1,b_1)\in K_1$ and $(a_2,b_2)\in K_2$ be such that $(a_2,b_2)\vartriangleleft(a_1,b_1)$ . 
We shall show that $\{a_1,b_1\}\cap\{a_2,b_2\}=\emptyset$. Suppose not.
Since the switching pair $(a_1, b_1)$ is the same as the switching pair $(b_1, a_1)$ and similarly for $(a_2, b_2)$, we can assume without loss of generality that $a_1=a_2$. 
Now since $(a_1,b_1)\in K_1$, there exist $h\in H$ and $u\in  A\setminus{\{h\}}$ such that $(h,u)\in T(\cg)$, $(a_1,b_1)\trianglelefteq (h,u)\trianglelefteq (h,w)$, and $\{a_1,b_1\}\cap \{h,u\}\ne\emptyset$.
So we have the ordering
\begin{equation}\label{4order}    (a_2,b_2)\vartriangleleft(a_1,b_1)\trianglelefteq(h,u)\trianglelefteq(h,w),
\end{equation}
for the four switching pairs.

We next show that $b_1=u$. If $a_1\in\{h,u\}$, then $\{a_2,b_2\}\cap \{h,u\}\supset \{a_1\}\cap \{h,u\}
\ne\emptyset$.
Moreover, $(a_2,b_2)\vartriangleleft(a_1,b_1)\trianglelefteq(h,u)$, so $(a_2,b_2)\in K_1$. This is a contradiction. Therefore, $a_1\notin \{h,u\}$. Hence, $b_1\in \{h,u\}$.
We claim that the ordering
\begin{equation}\label{4order2}
(a_2,b_2)\vartriangleleft(a_2,h)\vartriangleleft(h,w)
\end{equation}
is not possible. For a contradiction suppose one has the ordering (\ref{4order2}). We know that $(a_1,b_1)\trianglelefteq(h,w)$. 
Choose $\tilde{h}=h$ and $\tilde{u}=a_2$. 
Then $(\tilde{h},\tilde{u})=(a_2,h)\vartriangleleft(h,w)=(\tilde{h},w)$. 
Furthermore, $(a_2,b_2)\vartriangleleft(a_2,h)=(\tilde{h},\tilde{u})$ and $\{a_2,b_2\}\cap\{\tilde{h},\tilde{u}\}\supset\{a_2\}\ne\emptyset$.
Hence, $(a_2,b_2)\in K_1$, which is a contradiction.
Therefore, the ordering (\ref{4order2}) is not possible. In particular, (\ref{4order}) implies that $b_1\ne h$. Since $b_1\in\{h,u\}$, we conclude that $b_1=u$.

We claim that the ordering
\begin{equation}\label{4order4}
    (a_2,b_2)\vartriangleleft(a_2,w)
\end{equation}
is not possible.
If one has the ordering (\ref{4order4}), then $a_2\ne w$ and hence $a_2\in H$.
Choose $\tilde{h}=a_2$ and $\tilde{u}=b_2$. 
Then $(\tilde{h},\tilde{u})=(a_2,b_2)\vartriangleleft(a_2,w)=(\tilde{h},w)$. 
Furthermore, $(a_2,b_2)=(\tilde{h},\tilde{u})$ and obviously $\{a_2,b_2\}\cap\{\tilde{h},\tilde{u}\}\ne\emptyset$.
Hence, $(a_2,b_2)\in K_1$, which is a contradiction.
Therefore, the ordering (\ref{4order4}) is not possible.
By the ordering (\ref{4order}) and using $(a_1,b_1)=(a_2,u)$, it follows that $u\ne w$.

Therefore, $(a_1,b_1)=(a_2,u)$ and $u\ne w$. 
We proved the ordering
\begin{equation}\label{4orders}
(a_2,b_2)\vartriangleleft(a_2,u)\vartriangleleft(h,u)\vartriangleleft(h,w),
\end{equation}
in $S(\cg)$. Moreover, all four are different, where we recall $a_2=a_1\notin\{h,u\}$. 

We next show that the ordering
\begin{equation}\label{4order3}
(a_2,b_2)\vartriangleleft(a_2,u)\vartriangleleft(u,w)
\end{equation}
is not possible. Suppose one has the ordering (\ref{4order3}). We know that $u\ne w$. Now $(u,w)\in T(\cg)$, hence $u\in H$. 
Choose $\tilde{h}=h$ and $\tilde{u}=a_2$. 
Then $(\tilde{h},\tilde{u})=(a_2,u)\vartriangleleft(u,w)=(\tilde{h},w)$. 
Furthermore, $(a_2,b_2)\vartriangleleft(a_2,u)=(\tilde{h},\tilde{u})$. Also, $\{a_2,b_2\}\cap\{\tilde{h},\tilde{u}\}\supset\{a_2\}\ne\emptyset$.
So $(a_2,b_2)\in K_1$, which is a contradiction.
Hence, the ordering (\ref{4order3}) is not possible.

Now let $L$ be the linear order in $\cg$ immediately after the swap $(a_2,b_2)$. 
Obviously, $L_{\{a_2,h,u\}}\in \{hua_2,a_2uh,a_2hu, uha_2,ha_2u, ua_2h\}$.
We shall show that each of the six possible linear orders for $L_{\{a_2,h,u\}}$ gives a contradiction.

{\bf Case 1.} Suppose $L_{\{a_2,h,u\}}=hua_2$. 
Since $(a_2,u)\vartriangleleft(h,u)$ by (\ref{4orders}).
we have $(a_2,h)$ is a switching pair in $S(\cg)$ and
$(a_2,u)\vartriangleleft(a_2,h)\vartriangleleft(h,u)$ by Lemma~\ref{2geodesics3}\ref{2geodesics3d}.
Hence, $(a_2,b_2)\vartriangleleft(a_2,u)\vartriangleleft(a_2,h)\vartriangleleft(h,u)\vartriangleleft(h,w)$ by (\ref{4orders}). This is not possible by (\ref{4order2}).

{\bf Case 2.} Suppose $L_{\{a_2,h,u\}}=a_2uh$. This is the same as Case 1.

{\bf Case 3.} Suppose $L_{\{a_2,h,u\}}=a_2hu$.
Since $(a_2,u)\vartriangleleft(h,u)$ by (\ref{4orders}).
we have $(a_2,h)$ is a switching pair in $S(\cg)$ and
$(a_2,h)\vartriangleleft(a_2,u)\vartriangleleft(h,u)$ by Lemma~\ref{2geodesics3}\ref{2geodesics3e}.
Now if $(a_2,h)\vartriangleleft(a_2,b_2)$, then $a_2$ and $u$ are not adjacent in $L_{\{a_2,h,u\}}$ and it is not possible that $(a_2,u)$ swaps after $(a_2,b_2)$.
Hence, $(a_2,b_2)\vartriangleleft (a_2,h)$.
This gives the order $(a_2,b_2)\vartriangleleft(a_2,h)\vartriangleleft(a_2,u)\vartriangleleft(h,u)\vartriangleleft(h,w)$, which contradicts (\ref{4order2}).

{\bf Case 4.} Suppose $L_{\{a_2,h,u\}}=uha_2$. This is the same as Case 3.

{\bf Case 5.} Suppose $L_{\{a_2,h,u\}}=ha_2u$. There are four possibilities for $L_{\{a_2,h,u,w\}}$.

{\bf Case 5.1.} Suppose $L_{\{a_2,h,u,w\}}=wha_2u$. 
Then $L_{\{h,u,w\}}=whu$. Consider the switching pair $(u,w)$. Arguing as before, it must be that $(h,u)\vartriangleleft (u,w) \vartriangleleft(h,w)$.
This gives the order $(a_2,b_2)\vartriangleleft(a_2,u)\vartriangleleft(h,u)\vartriangleleft(u,w)\vartriangleleft(h,w)$, which contradicts
(\ref{4order3}).

{\bf Case 5.2.} Suppose $L^*_{\{a_2,h,u,w\}}=hwa_2u$. Then $L_{\{h,u,w\}}=hwu$. Since $(h,u)\vartriangleleft(h,w)$ by (\ref{4orders}), we need to swap $u$ needs to swap with $w$ before swapping $h$ and $u$. Moreover, since $L^*_{\{a_2,u,w\}}=wa_2u$, before we can swap $u$ and $w$, we need to swap $a_2$ and $w$ first or swap $a_2$ and $u$ first.
If $(a_2, w)\vartriangleleft (u,w)$, we have the impossible order (\ref{4order4}). If $(a_2, u)\vartriangleleft (u, w)$, we have the impossible order (\ref{4order3}).


{\bf Case 5.3.} Suppose $L_{\{a_2,h,u,w\}}=ha_2wu$. Then $L_{\{a_2,h,w\}}=ha_2w$, and so one has to swap first $h$ and $a_2$, or $a_2$ and $w$ before swapping $h$ and $w$. In the former case, we have the impossible order (\ref{4order2}). In the latter case, we have (\ref{4order4}).

{\bf Case 5.4.} Suppose $L_{\{a_2,h,u,w\}}=ha_2uw$. Then $L_{\{a_2,h,w\}}=ha_2w$ as Case 5.3, which is impossible.

{\bf Case 6.} Suppose $L_{\{a_2,h,u\}}=ua_2h$. In the arguments in Case 5, we considered whether or not alternatives are adjacent. Since all alternatives are now in reversed order, the argument is the same.

Therefore, $\{a_1,b_1\}\cap \{a_2,b_2\}=\emptyset$ and hence each switching pair in $K_1$ is disjoint from each switching pair in $K_2$ which  appears before it in $S(\cg)$. 
\end{proof}

When a geodesic meets a peak-pit Condorcet domain with a certain property, its sequence of switching pairs behaves in a certain way. 
Recall that given a Condorcet domain $\cd$, we use $N_p(\cd)$ to denote the set of peak-pit conditions satisfied by $\cd$.

\begin{lemma}\label{conpropl2}
    Let $\cd$ be a peak-pit Condorcet domain on $\{a,b,v,z\}$ with  $R=abvz\in \cd$ and $T=vzba\in\cd$ and such that $N_p(\cd_{\{a,b,z\}})=\{bN_{\{a,b,z\}}1\}$.
    Then $N_p(\cd_{\{a,b,v\}})=\{bN_{\{a,b,v\}}1\}$.
    Consequently, if $\cg=(G_1,\ldots, G_k)$ is a geodesic on $\cl(\{a,b,v\})$ connecting $R_{\{a,b,v\}}$ and $T_{\{a,b,v\}}$ such that $\cd_{\{a,b,v\}}\cup\{G_1,\ldots, G_k\}$ is a peak-pit Condorcet domain, then the sequence of switching pairs $S(\cg_{\{a,b,v\}})$ is $(b,v)\vartriangleleft (a,v)\vartriangleleft (a,b)$.
\end{lemma}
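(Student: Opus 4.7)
The plan is to prove the two assertions in turn. For the first assertion $N_p(\cd_{\{a,b,v\}}) = \{bN_{\{a,b,v\}}1\}$, I would note that $R_{\{a,b,v\}} = abv$ and $T_{\{a,b,v\}} = vba$ are reverses of each other and both lie in $\cd_{\{a,b,v\}}$, so Lemma~\ref{wex0}\ref{wex0a} reduces $N_p(\cd_{\{a,b,v\}})$ to a nonempty subset of $\{bN_{\{a,b,v\}}1,\,bN_{\{a,b,v\}}3\}$. It therefore suffices to exclude $bN_{\{a,b,v\}}3$ by contradiction.

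The critical step is to produce a witness $S \in \cd$ whose restriction to the triple $\{b,v,z\}$ breaks peak-pittedness. Since $N_p(\cd_{\{a,b,z\}})$ equals $\{bN_{\{a,b,z\}}1\}$ and does not contain $bN_{\{a,b,z\}}3$, there must be some $S \in \cd$ with $S_{\{a,b,z\}} \in \{azb,\,zab\}$; in particular both $a$ and $z$ sit above $b$ in $S$. Assume for contradiction that $bN_{\{a,b,v\}}3$ holds: then $b$ is never last on $\{a,b,v\}$, which combined with $a$ above $b$ forces $S_{\{a,b,v\}} = abv$, so $v$ sits below $b$ in $S$. Both possibilities $S \in \{azbv,\,zabv\}$ then restrict to $S_{\{b,v,z\}} = zbv$. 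Now $\cd_{\{b,v,z\}}$ contains $R_{\{b,v,z\}} = bvz$, $T_{\{b,v,z\}} = vzb$, and $zbv$; across these three orders each of $b,v,z$ appears once in position $1$ and once in position $3$, so no never-top and no never-bottom condition can hold on $\{b,v,z\}$, contradicting that $\cd$ is peak-pit.

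For the consequence, observe that $R_{\{a,b,v\}} = abv$ and $T_{\{a,b,v\}} = vba$ differ by three adjacent swaps, so Lemma~\ref{2geodesics} produces exactly two geodesics between them: the never-bottom geodesic $(abv,bav,bva,vba)$ and the never-top geodesic $(abv,avb,vab,vba)$. If $\cg$ were the never-bottom one, then $K(\cg)$ would contain $bav$, placing $b$ first and killing $bN_{\{a,b,v\}}1$ in the union. By the first assertion $\cd_{\{a,b,v\}}$ already contains a linear order with $b$ last (otherwise $bN_{\{a,b,v\}}3$ would hold), so $bN_{\{a,b,v\}}3$ also fails for $\cd_{\{a,b,v\}} \cup K(\cg)$. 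By Lemma~\ref{wex0}\ref{wex0a} this leaves the union with no peak-pit condition on $\{a,b,v\}$, contradicting the hypothesis. Hence $\cg$ must be the never-top geodesic, and Lemma~\ref{2geodesics}\ref{2geodesicsb} gives the required $S(\cg): (b,v) \vartriangleleft (a,v) \vartriangleleft (a,b)$.

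The main obstacle is the first assertion, specifically locating the right witness $S$ and recognising that its restriction to $\{b,v,z\}$ must equal $zbv$ in order to close the triangle with $bvz$ and $vzb$ that forbids peak-pittedness; this requires combining the given $bN_{\{a,b,z\}}1$ (which places $a,z$ above $b$) with the to-be-contradicted $bN_{\{a,b,v\}}3$ (which pushes $v$ below $b$). Once the first assertion is secured, the second is a short case analysis using the dichotomy from Lemma~\ref{2geodesics}.
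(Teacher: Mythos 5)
Your proof is correct and follows essentially the same route as the paper: Lemma~\ref{wex0}\ref{wex0a} to narrow the options, a witness $S$ with $b$ ranked last on $\{a,b,z\}$ whose restriction $S_{\{b,v,z\}}=zbv$ forms a cycle with $R_{\{b,v,z\}}=bvz$ and $T_{\{b,v,z\}}=vzb$, and the dichotomy of Lemma~\ref{2geodesics} for the geodesic claim. Your only deviation is a small streamlining: the paper splits into the cases $N_p(\cd_{\{a,b,v\}})=\{bN_{\{a,b,v\}}3\}$ and $\{bN_{\{a,b,v\}}1,bN_{\{a,b,v\}}3\}$ (using a different witness in the first case), whereas your single witness from the $\{a,b,z\}$ side handles both at once.
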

\begin{proof}
    Since $R=abvz\in \cd$ and $T=vzba\in\cd$, we have $abv,vba\in\cd_{\{a,b,v\}}$.
    Hence, $\emptyset\ne N_p(\cd_{\{a,b,v\}})\subseteq \{bN_{\{a,b,v\}}1$, $bN_{\{a,b,v\}}3\}$ due to Lemma~\ref{wex0}\ref{wex0a}. Suppose $N_p(\cd_{\{a,b,v\}})\ne\{bN_{\{a,b,v\}}1\}$ for a contradiction. We have two cases for $N_p(\cd_{\{a,b,v\}})$.

{\bf Case 1.} Suppose $N_p(\cd_{\{a,b,v\}})=\{bN_{\{a,b,v\}}3\}$.
Since $abv, vba\in\cd_{\{a,b,v\}}$ and $\cd_{\{a,b,v\}}$ does not satisfy $bN_{\{a,b,v\}}1$, one deduces that $bav\in \cd_{\{a,b,v\}}$ or $bva\in \cd_{\{a,b,v\}}$. Hence, there is a linear order $S\in\cd$ such that $S_{\{a,b,v\}}$ is either $bav$ or $bva$. Nevertheless, note that $S$ always ranks $b$ before $a$.
On the other hand, since  $N_p(\cd_{\{a,b,z\}})=\{bN_{\{a,b,z\}}1\}$ is given, we have $\cd_{\{a,b,z\}}\subseteq\{abz,azb,zab,zba\}$.
Hence, the only linear order in $\cd_{\{a,b,z\}}$ that ranks $b$ before $a$ is $zba$ and so $S_{\{a,b,z\}}=zba$. 
Recall that $S_{\{a,b,v\}}=bav$ or $S_{\{d,e,z\}}=bva$. 
Hence, $S$ is either $zbav$ or $zbva$. 
Therefore, in both cases, $zbv=S_{\{b,v,z\}}\in \cd_{\{b,v,z\}}$.
In particular, $\{bvz,vzb,zbv\}=\{R,T,S\}_{\{b,v,z\}}\subseteq \cd_{\{b,v,z\}}$.
Thus, $\cd_{\{b,v,z\}}$ does not satisfy any never-condition, which contradicts that $\cd$ is a peak-pit Condorcet domain. This case is not possible.

{\bf Case 2.} Suppose $N_p(\cd_{\{a,b,v\}})=\{bN_{\{a,b,v\}}3,bN_{\{a,b,v\}}1\}$.
Then $\cd_{\{d,e,w\}}$ = $\{abv,vba\}$ due to Lemma~\ref{wex0}\ref{wex0b}. However, since $N_p(\cd_{\{a,b,z\}})=\{bN_{\{a,b,z\}}1\}$ and $abz, zba\in\cd_{\{a,b,z\}}$ 
one deduces that $azb\in \cd_{\{a,b,z\}}$ or $zab\in \cd_{\{a,b,z\}}$. 
Hence, there is a linear order $S\in\cd$ such that $S_{\{a,b,z\}}$ is either $azb$ or $zab$.
In particular, $S_{\{b,z\}}=zb$ and $S_{\{a,b\}}=ab$.
Since the only linear order in $\cd_{\{a,b,v\}}$ that ranks $a$ before $b$ is $abv$, we have $S_{\{a,b,v\}}=abv$. Recall that $S_{\{b,z\}}=zb$.
Hence, $S$ is either $azbv$ or $zabv$, and so $zbv=S_{\{b,v,z\}}\in \cd_{\{b,v,z\}}$. 
In particular, $\{bvz,vzb,zbv\}=\{R,T,S\}_{\{b,v,z\}}\subseteq \cd_{\{b,v,z\}}$.
Thus, $\cd_{\{b,v,z\}}$ does not satisfy any never-condition, which contradicts that $\cd$ is a peak-pit Condorcet domain. This case is also not possible.

Therefore, $N_p(\cd_{\{a,b,v\}})=\{bN_{\{a,b,v\}}1\}$.

    Moreover, let $\cg=(G_1,\ldots, G_k)$ be a geodesic on $\cl(\{a,b,v\})$ connecting $R_{\{a,b,v\}}$ and $T_{\{a,b,v\}}$ such that $\cd_{\{a,b,v\}}\cup\{G_1,\ldots, G_k\}$ is a peak-pit Condorcet domain. 
    Since $R_{\{a,b,v\}}=abv$ and $T_{\{a,b,v\}}=vba$, the set  $\{G_1,\ldots, G_k\}_{\{a,b,v\}}$ satisfies a unique never-condition which is either $bN_{\{a,b,v\}}3$ or $bN_{\{a,b,v\}}1$ due to Lemma~\ref{2geodesics}. Since $N_p(\cd_{\{a,b,v\}})=\{bN_{\{a,b,v\}}1\}$,  the set  $\{G_1,\ldots, G_k\}_{\{a,b,v\}}$ satisfies a unique never-condition $bN_{\{a,b,v\}}1$.
    Therefore, the sequence of switching pairs $S(\cg_{\{a,b,v\}})$ is $(b,v)\vartriangleleft (a,v)\vartriangleleft(a,b)$ due to Lemma~\ref{2geodesics}\ref{2geodesicsb}.
\end{proof}

The last two lemmas above can be used for the following lemma. Furthermore, Lemmas~\ref{conpropl1}, \ref{conpropl2} and \ref{conpropl3} help to prove Lemma~\ref{conex2.1} later. 

\begin{lemma}\label{conpropl3}
Let $\cd\subseteq\cl(A)$ be a peak-pit Condorcet domain. Let $L,T\in\cd$ be distinct linear orders and $z\in A$.
Let $\cg=(G_1,\ldots, G_k)$ be a geodesic on $\cl(A\setminus\{z\})$ connecting $L_{A\setminus\{z\}}$ and $T_{A\setminus\{z\}}$ such that $\cd_{A\setminus\{z\}}\cup\{G_1,\ldots, G_k\}$ is a peak-pit Condorcet domain.
Let $T(\cg)$ be the set of switching pairs in $S(\cg)$.  Let $w\in A\setminus\{z\}$ be an alternative that swaps in $\cg$ with at least one alternative in $A$. Let $K_1$ be the swap closure of $w$.
Let $(a,b)\in K_1$ be such that $L_{\{a,b,z\}}=abz$, $L_{\{w,z\}}=wz$
and $T_{\{a,b,w,z\}}=wzba$. Then $N_p(\cd_{\{a,b,z\}})\ne\{bN_{\{a,b,z\}}1\}$.
\end{lemma}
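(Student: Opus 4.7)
The plan is a proof by contradiction: assume $N_p(\cd_{\{a,b,z\}})=\{bN_{\{a,b,z\}}1\}$. Unpacking $(a,b)\in K_1$, the definition of the swap closure of $w$ furnishes $h\in H$ and $u\in A\setminus\{h\}$ with $(h,u),(h,w)\in T(\cg)$, the ordering $(a,b)\trianglelefteq (h,u)\trianglelefteq (h,w)$ in $S(\cg)$, and $\{a,b\}\cap\{h,u\}\ne\emptyset$. From the hypotheses $L_{\{a,b,z\}}=abz$ and $L_{\{w,z\}}=wz$ there are exactly three possibilities for $L_{\{a,b,w,z\}}$, namely $abwz$, $awbz$, and $wabz$, and I would split the argument along these cases.

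In the case $L_{\{a,b,w,z\}}=abwz$, Lemma~\ref{conpropl2} with $v=w$ applies to $\cd_{\{a,b,w,z\}}$ (which contains $abwz$ and $wzba$) and yields both $N_p(\cd_{\{a,b,w\}})=\{bN_{\{a,b,w\}}1\}$ and the never-top sequence $S(\cg_{\{a,b,w\}})\colon(b,w)\vartriangleleft(a,w)\vartriangleleft(a,b)$. If $h\in\{a,b\}$, then $(h,w)\in\{(a,w),(b,w)\}$ occurs strictly before $(a,b)$ in $S(\cg_{\{a,b,w\}})$, and hence in $S(\cg)$ by Lemma~\ref{conex0}, contradicting $(a,b)\trianglelefteq(h,w)$. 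Otherwise $h\notin\{a,b\}$, so $u\in\{a,b\}$; taking $u=a$ (the case $u=b$ being symmetric), the ordering $(a,w)\vartriangleleft(h,a)\vartriangleleft(h,w)$ restricted to $\cg_{\{a,h,w\}}$, combined with the dichotomy of Lemma~\ref{2geodesics}, forces $L_{\{a,h,w\}}=awh$ and $\cg_{\{a,h,w\}}$ to be never-bottom. Together with $L_{\{a,b,w\}}=abw$ this gives $L_{\{a,b,h,w\}}=abwh$, and a second invocation of Lemma~\ref{conpropl2} with $v=h$ (provided $L_{\{h,z\}}=hz$, so that $L_{\{a,b,h,z\}}=abhz$ and $T_{\{a,b,h,z\}}=hzba$) transports the assumption to $N_p(\cd_{\{a,b,h\}})=\{bN_{\{a,b,h\}}1\}$ and to the never-top sequence for $\cg_{\{a,b,h\}}$; this is incompatible with the never-bottom ordering $(a,b)\vartriangleleft(a,h)\vartriangleleft(b,h)$ already forced on $\cg_{\{a,b,h\}}$ by the chain, giving the contradiction.

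Cases (B) $L_{\{a,b,w,z\}}=awbz$ and (C) $L_{\{a,b,w,z\}}=wabz$ follow the same template. By Lemma~\ref{conex0}, one has $(b,w)\notin T(\cg)$ in Case (B) and both $(a,w),(b,w)\notin T(\cg)$ in Case (C), which eliminates $h\in\{a,b\}$ from the chain immediately; the remaining subcase $h\notin\{a,b\}$ is then handled exactly as above via $\cg_{\{a,h,w\}}$ or $\cg_{\{b,h,w\}}$. The main obstacle is this $h\notin\{a,b\}$ subcase, where no direct ordering contradiction is available from the chain: one must iterate Lemma~\ref{conpropl2} onto the quadruple $\{a,b,h,z\}$, and the second application requires knowing $L_{\{h,z\}}=hz$. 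Handling the alternative $L_{\{h,z\}}=zh$ will require auxiliary work, likely via the peak-pit structure of $\cd_{\{b,h,z\}}$ or $\cd_{\{a,h,z\}}$, or via a further appeal to Lemma~\ref{conpropl1}'s disjointness of $K_1$-pairs from earlier $K_2$-pairs to rule out that configuration.
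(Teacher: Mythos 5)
Your overall strategy is essentially the paper's: argue by contradiction, split on the position of $w$ relative to $a,b$ in $L$ (the paper's Cases 1--3 are indexed by $L_{\{a,b,w\}}\in\{abw,awb,wab\}$), use Lemma~\ref{conpropl2} on $\{a,b,w,z\}$ to pin down $S(\cg_{\{a,b,w\}})$, conclude $h\notin\{a,b\}$ and $u\in\{a,b\}$, and then milk the chain $(a,b)\trianglelefteq(h,u)\trianglelefteq(h,w)$; your forcing of $L_{\{a,h,w\}}=awh$ via the dichotomy of Lemma~\ref{2geodesics} compresses the paper's subcases 1.1--1.3 into one step, which is fine. But the proposal has genuine gaps. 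The step you yourself flag is a real hole, not a loose end: the second invocation of Lemma~\ref{conpropl2}, now on $\{a,b,h,z\}$, requires a member of $\cd_{\{a,b,h,z\}}$ equal to $abhz$, and nothing in the hypotheses controls the relative position of $h$ and $z$ in $L$; you give no argument for the subcase $L_{\{h,z\}}=zh$, only a guess at where one might come from, and the proof of Lemma~\ref{conpropl2} genuinely uses the order $abvz$ (its contradiction rests on $R_{\{b,v,z\}}=bvz$), so the lemma cannot simply be applied with $abzh$ instead. In addition, your parenthetical misstates the dependencies: $T_{\{a,b,h,z\}}=hzba$ does not follow from $L_{\{h,z\}}=hz$; it has to be derived, as the paper does in its Case 1.3, from $L_{\{h,w\}}=wh$ and $(h,w)\in T(\cg)$ (hence $T_{\{h,w\}}=hw$) together with $T_{\{a,b,w,z\}}=wzba$. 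That derivation is recoverable in your forced configuration, but it is absent from the write-up.

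Second, Cases (B) and (C) are not ``exactly as above.'' Lemma~\ref{conpropl2} on $\{a,b,w,z\}$ needs $L_{\{a,b,w,z\}}=abwz$, so in (B) and (C) you have neither $N_p(\cd_{\{a,b,w\}})=\{bN_{\{a,b,w\}}1\}$ nor the ordering $(b,w)\vartriangleleft(a,w)\vartriangleleft(a,b)$ on which your Case (A) argument leans; in (C) the pair $(a,w)$ is not even in $T(\cg)$, so the triple $\{a,h,w\}$ need not carry all three switching pairs and the dichotomy argument that forced $L_{\{a,h,w\}}=awh$ is unavailable. The paper's corresponding Cases 2 and 3 are not reruns of Case 1: several of their subcases (for instance $L_{\{a,b,h,w\}}=awhb$ in Case 2.2, and Cases 3.1--3.3) are settled by direct adjacency and ordering contradictions exploiting $(a,w),(b,w)\notin T(\cg)$ and by computing $T_{\{a,b,h,w\}}$, not by a second application of Lemma~\ref{conpropl2}. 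As written, your treatment of (B) and (C) is an assertion rather than an argument, so the proposal is incomplete at both of these points.
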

\begin{proof}
Let $H$ be the swap set of $w$.
    Suppose $N_p(\cd_{\{a,b,z\}})=\{bN_{\{a,b,z\}}1\}$ for a contradiction. We consider three cases for $R_{\{a,b,w\}}$. We will show that each case leads to $(a,b)\notin K_1$.

{\bf Case 1.} Suppose $L_{\{a,b,w\}}=abw$. Since $L_{\{w,z\}}=wz$, we have $L_{\{a,b,w\}}=abwz$. 
Hence, the sequence of switching pairs in $S(\cg_{\{a,b,w\}})$ is $(b,w)$, $(a,w)$, $(a,b)$ due to Lemma~\ref{conpropl2}.
Since $(a,b)\in K_1$, there exists $h\in H\setminus\{a,b\}$ and $u\in A\setminus\{h\}$ such that $(a,b)\trianglelefteq(h,u)\trianglelefteq(h,w)$, and $\{a,b\}\cap\{h,u\}\ne\emptyset$. Since $h\in H\setminus\{a,b\}$, we have $u\in \{a,b\}$.

{\bf Case 1.1.} Suppose $L_{\{a,b,h\}}=hab$. Then $L_{\{a,b,h,w\}}=habw$. In particular, $L_{\{a,h,w\}}=haw$.
Note that all switching pairs $(a,h)$, $(h,w)$ and $(a,w)$ are elements of $T(\cg)$.
Since $(h,a)=(h,u)\trianglelefteq (h,w)$, it follows from Lemma~\ref{2geodesics3}\ref{2geodesics3c} that $(h,a)\trianglelefteq (a,w)$. However, $(a,w)\trianglelefteq(a,b)$. So $(h,u)=(h,a)\trianglelefteq(a,b)\trianglelefteq(h,u)$.
Hence, $h\in\{h,u\}=\{a,b\}$. This is a contradiction. The argument is the same for the case when $u=b$.

{\bf Case 1.2.} Suppose $L_{\{a,b,h\}}=ahb$. Then $L_{\{a,b,h,w\}}=ahbw$.
If $u=a$, then $L_{\{a,h,w\}}=ahw$ and all switching pairs $(a,h)$,$(h,w)$ and $(a,w)$ are elements of $T(\cg)$.
Now $(h,a)\trianglelefteq(h,w)$, so by Lemma~\ref{2geodesics3}\ref{2geodesics3c}, one deduces that $(h,a)\trianglelefteq(a,w)\trianglelefteq(a,b\trianglelefteq(h,u)=(h,a)$. Hence, $h\in\{h,u\}=\{a,b\}$. This is a contradiction.
Hence, $u=b$. Then $L_{\{b,h,w\}}=hbw$ and this is similar to Case 1.1.

{\bf Case 1.3.} Suppose $L_{\{a,b,h\}}=abh$. 
Then $L_{\{a,b,h,w\}}=abhw$ or $L_{\{a,b,h,w\}}=abwh$.
If $L_{\{a,b,h,w\}}=abhw$, then $L_{\{a,h,w\}}=ahw$, which is not possible by the argument in Case 1.2.
Hence, $L_{\{a,b,h,w\}}=abwh$. Then $L_{\{h,w\}}=wh$. Since $(h,w)\in T(\cg)$, we have $T_{\{h,w\}}=hw$. Since $T_{\{a,b,w\}}=wba$, one has $T_{\{a,b,h\}}=hba$. 
Also, because  $T_{\{h,w\}}=hw$ and one has $T_{\{a,b,h\}}=hwba$. 
Since $T_{\{a,b,w,z\}}=wzba$ and $L_{\{w,z\}}=wz$, one has $T_{\{a,b,h,z\}}=hzba$ and $L_{\{a,b,z\}}=hzba$.
It follows that  the sequence of switching pairs in $S(\cg_{\{a,b,h\}})$ is $(h,b)$, $(h,a)$, $(a,b)$ due to Lemma~\ref{conpropl2}. Hence, whether $u=a$ or $u=b$, the switching pair $(h,u)$ always occurs before $(a,b)$, a contradiction. 

{\bf Case 2.} Suppose $L_{\{a,b,w\}}=awb$. Since  $T_{\{a,b,w\}}=wba$, the switching pair $(a,w)\in T(\cg)$ appears before $(a,b)$, and $(b,w)\notin T(\cg)$.
Now since $(a,b)\in K_1$, there exists $h\in H\setminus\{a,b\}$ and $u\in A\setminus\{h\}$ such that $(a,b)\trianglelefteq(h,u)\trianglelefteq(h,w)$, and $\{a,b\}\cap\{h,u\}\ne\emptyset$. We consider three cases.

{\bf Case 2.1.} Suppose $L_{\{a,b,h\}}=hab$. Then $L_{\{a,b,h,w\}}=hawb$ and so $L_{\{a,h,w\}}=haw$. This is the same as Case 1.1.

{\bf Case 2.2.} Suppose $L_{\{a,b,h\}}=ahb$. Then $L_{\{a,b,h,w\}}=ahwb$ or $L_{\{a,b,h,w\}}=awhb$.
Suppose $L_{\{a,b,h,w\}}=ahwb$. If $u=a$, then $L_{\{a,h,w\}}=ahw$ which is not possible by the argument in Case 1.2.
If $u=b$, then $L_{\{b,h,w\}}=hwb$. Recall that $(b,w)\notin T(\cg)$, but $(h,b)=(h,u)\in T(\cg)$ and $(h,w)\in T(\cg)$. Hence, $(h,w)\trianglelefteq(h,b)$, a contradiction.

Therefore, $L_{\{a,b,h,w\}}=awhb$. If $u=a$, then $L_{\{a,b,h\}}=ahb$.
Now $(h,w)\in T(\cg)$, so $T_{\{h,w\}}=hw$ and $T_{\{a,b,h,w\}}=hwba$. Then $L_{\{b,h\}}=hb$ and $T_{\{b,h\}}=hb$, so $(b,h)\notin T(\cg)$.
Furthermore, $(h,a)=(h,u)\in T(\cg)$ and $(a,b)\in T(\cg)$.
Since $L_{\{a,b,h\}}=ahb$, if follows that $(h,u)=(h,a)\trianglelefteq (a,b)$, a contradiction. The argument for $u=b$ is similar.

{\bf Case 2.3.} Suppose $L_{\{a,b,h\}}=abh$. This is the same as Case 1.3.

{\bf Case 3.} Suppose $L_{\{a,b,w\}}=wab$. 
Since $T_{\{a,b,w\}}=wba$, the switching pairs $(a,w)$ and $(b,w)$ are not in $T(\cg)$. Since $(a,b)\in K_1$, there exists $h\in H$ and $u\in A\setminus\{h,z\}$ such that $(a,b)\trianglelefteq(h,u)\trianglelefteq(h,w)$, and $\{a,b\}\cap\{h,u\}\ne\emptyset$. 
We consider three cases. The argument is similar to Case 2.

{\bf Case 3.1.} Suppose $L_{\{a,b,h\}}=hab$. 
Then $L_{\{a,b,h,w\}}=hwab$ or $L_{\{a,b,h,w\}}=whba$. 
Suppose $L_{\{a,b,h,w\}}=hwab$. If $u=a$, then $L_{\{a,h,w\}}=hwa$. Since $(w,a)\notin T(\cg)$, we have $(h,w)\trianglelefteq(h,u)=(h,a)$, a contradiction. If $u=b$, then $L_{\{b,h,w\}}=hwb$ and this is the same to the case if $u=a$.

{\bf Case 3.2.} Suppose $L_{\{a,b,h\}}=ahb$. 
Then $L_{\{a,b,h,w\}}=wahb$. If $u=a$ and $(h,b)\notin T(\cg)$, then $(h,a)=(h,u)\trianglelefteq(a,b)$, a contradiction. Hence, $(h,b)\in T(\cg)$. Then $T_{\{a,b,h\}}=bha$.
Since $T_{\{a,b,w\}}=wba$, we have $T_{\{a,b,h,w\}}=wbha$ and so $(h,w)\notin T(\cg)$, a contradiction.
If $u=b$, then $L_{\{b,h,w\}}=whb$, this is the same as Case 3.1.

{\bf Case 3.3.} Suppose $L_{\{a,b,h\}}=abh$. 
Then $L_{\{a,b,h,w\}}=wabh$. Then $L_{\{a,h,w\}}=wah$ and $L_{\{b,h,w\}}=wbh$.
This is the same as Case 3.2.
Therefore, $L_{\{a,b,h,w\}}=whba$. If $u=a$, then $L_{\{a,h,w\}}=wha$. Recall that $(a,w)\notin T(\cg)$ but both $(h,a)=(h,u)$ and $(h,w)$ are in $T(\cg)$. Since $(h,a)=(h,u)\trianglelefteq(h,w)$. After swapping $(h,a)$, one obtains the linear order $wah$ from $L_{\{a,h,w\}}=wha$. However, the alternatives $h$ and $w$ are not adjacent in $wah$ and so they cannot be swapped, a contradiction. If $u=b$, then $L_{\{b,h,w\}}=whb$ and this is the same as the case where $u=a$.

Therefore, it is not possible for such a switching pair $(a,b)$ to exist in $K_1$ and so $N_p(\cd_{\{a,b,z\}})\ne\{bN_{\{a,b,z\}}1\}$.
\end{proof}



\section{Lemmas for Peak-Pit conditions with additional alternatives}

The proof of Proposition~\ref{conex2} involves 
an inductive construction, where we add an alternative $z$ at the bottom of the first linear order in a geodesic, and at the top of the last linear order. 
Hence, we will need to ensure that the result satisfies certain peak-pit conditions, and we will be particularly interested in never-top conditions. 
In this section, we prove some technical lemmas to ensure that if the first switching pair of alternatives in the geodesic, together with $z$, satisfies a never-top condition, then any other pair of alternatives, together with $z$, will also satisfy the never-top condition, making the addition of $z$ feasible.

We will first examine two cases where a domain for a triple of alternatives satisfies only a never-top condition, then for a relevant triple, it can also satisfy a never-top condition.

\begin{lemma}\label{fgz2}
    Let $\cd$ be a peak-pit Condorcet domain on $\{f,g,s,z\}$ with  $R=fgsz\in \cd$ and $T\in\cd$ such that $T\in\{zgsf,zsgf\}$ and
    $N_p(\cd_{\{f,g,z\}})=\{gN_{\{f,g,z\}}1\}$.
Let $\cg=(G_1,\ldots, G_k)$ be a geodesic on $\cl(\{f,g,s\})$ connecting $R_{\{f,g,s\}}$ and $T_{\{f,g,s\}}$ such that $\cd_{\{f,g,s\}}\cup\{G_1,\ldots, G_k\}$ is a peak-pit Condorcet domain and $(f,g)$ is the first switching pair in $S(\cg)$. Then  $sN_{\{f,s,z\}}1\in N_p(\cd_{\{f,s,z\}})$.
\end{lemma}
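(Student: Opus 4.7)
The plan is to argue by contradiction. I would suppose that $sN_{\{f,s,z\}}1\notin N_p(\cd_{\{f,s,z\}})$. Since $R_{\{f,s,z\}}=fsz$ and $T_{\{f,s,z\}}=zsf$ both lie in $\cd_{\{f,s,z\}}$ (the second holds in both admissible forms of $T$), Lemma~\ref{wex0}\ref{wex0a} gives $\emptyset\ne N_p(\cd_{\{f,s,z\}})\subseteq\{sN_{\{f,s,z\}}1, sN_{\{f,s,z\}}3\}$, so some linear order in $\cd_{\{f,s,z\}}$ ranks $s$ first: either $sfz$ or $szf$ is in $\cd_{\{f,s,z\}}$. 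I would then pick a witness $S\in\cd$ with $S_{\{f,s,z\}}\in\{sfz,szf\}$.

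Next I would use the hypothesis $N_p(\cd_{\{f,g,z\}})=\{gN_{\{f,g,z\}}1\}$, i.e., $g$ is never first on $\{f,g,z\}$, to pin down $S_{\{f,g,z\}}$ from the orientation of $S_{\{f,z\}}$, and then combine with $S_{\{f,s,z\}}$ to identify $S$ and $S_{\{f,g,s\}}$. A short case check shows that $S_{\{f,g,s\}}=sfg$ in every situation except $S=szgf$, which occurs when $S_{\{f,s,z\}}=szf$ and $S_{\{f,g,z\}}=zgf$.

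I would then exploit peak-pittedness of $\cd_{\{f,g,s\}}\cup K(\cg)$. Because $\cg$ starts at $R_{\{f,g,s\}}=fgs$ with first switching pair $(f,g)$, and ends at $T_{\{f,g,s\}}\in\{gsf,sgf\}$, we always have $\{fgs,gfs,gsf\}\subseteq K(\cg)$. Hence whenever $S_{\{f,g,s\}}=sfg$, the four-element set $\{fgs,gfs,gsf,sfg\}\subseteq\cd_{\{f,g,s\}}\cup K(\cg)$ already fails every never-condition, contradicting peak-pittedness (in fact it fails even to be a Condorcet domain) and finishing every case in which $S_{\{f,g,s\}}=sfg$.

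The hard part is the remaining case $S=szgf$, since $\{fgs,gfs,gsf,sgf\}$ satisfies $gN_{\{f,g,s\}}3$ and no contradiction arises from the triple $\{f,g,s\}$ alone. I would split on the two allowed forms of $T$. When $T=zgsf$, the restrictions $R_{\{g,s,z\}}=gsz$, $T_{\{g,s,z\}}=zgs$ and $S_{\{g,s,z\}}=szg$ are three cyclic shifts satisfying no never-condition, contradicting that $\cd_{\{g,s,z\}}$ is Condorcet. When $T=zsgf$, the triple $\{g,s,z\}$ is benign (it satisfies $sN_{\{g,s,z\}}3$), so I would invoke $N_p(\cd_{\{f,g,z\}})=\{gN_{\{f,g,z\}}1\}$ once more: since $gN_{\{f,g,z\}}3\notin N_p(\cd_{\{f,g,z\}})$, there exists $S^\ast\in\cd$ with $S^\ast_{\{f,g,z\}}\in\{fzg,zfg\}$. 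Under the standing assumption that $sfz\notin\cd_{\{f,s,z\}}$ (so $\cd_{\{f,s,z\}}\subseteq\{fsz,szf,zsf\}$), a small case split on $S^\ast_{\{f,g,z\}}$ and $S^\ast_{\{f,s,z\}}$ forces $S^\ast_{\{f,g,s\}}\in\{fsg,sfg\}$; in either subcase the five-order set $\{fgs,gfs,gsf,sgf,S^\ast_{\{f,g,s\}}\}\subseteq\cd_{\{f,g,s\}}\cup K(\cg)$ satisfies no peak-pit condition, yielding the final contradiction.
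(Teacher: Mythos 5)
Your proposal is correct, but it is organised differently from the paper's proof. The paper fixes a single witness $S\in\cd$ with $S_{\{f,g,z\}}\in\{fzg,zfg\}$ (from the failure of $gN_{\{f,g,z\}}3$), uses the containment $\cd_{\{f,s,z\}}\subseteq\{fsz,sfz,szf,zsf\}$ forced by the negated conclusion to pin $S$ down to $\{zsfg,szfg,fszg,sfzg\}$ (so $S_{\{f,g,s\}}$ ranks $g$ last), and then splits on $T$: for $T=zgsf$ it exhibits a cyclic triple on $\{f,g,s\}$ or $\{g,s,z\}$, and for $T=zsgf$ it identifies the geodesic as the never-bottom one via Lemma~\ref{2geodesics}\ref{2geodesicsa} and contradicts its unique condition $gN_{\{f,g,s\}}3$. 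You instead take the primary witness directly from the negated conclusion (an order ranking $s$ first on $\{f,s,z\}$), pin it down with $gN_{\{f,g,z\}}1$, and settle every case except $S=szgf$ with the four-order set $\{fgs,gfs,gsf,sfg\}$; only in the residual case do you bring in the paper's witness $S^\ast$ and a five-order set on $\{f,g,s\}$. What your route buys is that the geodesic enters only through the elementary inclusion $\{fgs,gfs,gsf\}\subseteq K(\cg)$ (and $K(\cg)=\{fgs,gfs,gsf,sgf\}$ when $T_{\{f,g,s\}}=sgf$), so the dichotomy lemma is replaced by a simple coverage count on a triple; the cost is a longer case tree that ultimately uses both witnesses. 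One point to tighten: ``under the standing assumption that $sfz\notin\cd_{\{f,s,z\}}$'' is not an assumption you actually made. Say instead that if $sfz\in\cd_{\{f,s,z\}}$ you may re-choose the witness $S$ with $S_{\{f,s,z\}}=sfz$, which lands in the already-settled case $S_{\{f,g,s\}}=sfg$; hence in the residual case you may assume $sfz\notin\cd_{\{f,s,z\}}$, and combined with $N_p(\cd_{\{f,s,z\}})=\{sN_{\{f,s,z\}}3\}$ (which you established at the outset) this gives the containment $\cd_{\{f,s,z\}}\subseteq\{fsz,szf,zsf\}$ that your analysis of $S^\ast$ relies on. With that sentence added, the argument is complete.
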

\begin{proof}
Suppose for contradiction that $sN_{\{f,s,z\}}1\notin N_p(\cd_{\{f,s,z\}})$. 
Since $R_{\{f,s,z\}}=fsz$ and $T_{\{f,s,z\}}\in\{zgsf,zsgf\}_{\{f,s,z\}}=\{zsf\}$, we have
$\emptyset\ne N_p(\cd_{\{f,s,z\}})\subseteq \{sN_{\{f,s,z\}}1$, $sN_{\{f,s,z\}}3\}$ due to Lemma~\ref{wex0}\ref{wex0a}.
Since $sN_{\{f,s,z\}}1\notin N_p(\cd_{\{f,s,z\}})$, we have $N_p(\cd_{\{f,s,z\}})=\{sN_{\{f,s,z\}}3\}$ and so $\cd_{\{f,s,z\}}\subseteq\{fsz,zsf,sfz,szf\}$.

Now since $N_p(\cd_{\{f,g,z\}})=\{gN_{\{f,g,z\}}1\}$ and $\cd_{\{f,g,z\}}$ does not satisfy $gN_{\{f,g,z\}}3$, there is a linear order $S\in\cd$ such that 
    either $S_{\{f,g,z\}}=fzg$ or $S_{\{f,g,z\}}=zfg$. 
Suppose $S_{\{f,g,z\}}=fzg$. Since only $fsz$ and $sfz$ in $\cd_{\{f,s,z\}}$ rank $z$ after $f$, we have $S$ equals $fszg$ or $sfzg$.
Alternatively, suppose $S_{\{f,g,z\}}=zfg$.
Now only $zsf$ and $szf$ are elements of $\cd_{\{f,s,z\}}$ that rank $z$ before $f$. So 
$S$ equals either $zsfg$ or $szfg$.
Together, $S\in\{zsfg,szfg,fszg,sfzg\}$.
Note that $S_{\{f,g,s\}}\in\{sfg,fsg\}$ always ranks $g$ last. We still have two cases for $T$.

{\bf Case 1.} Suppose $T=zgsf$. 
Note that if $S$ = $fszg$, then $\{gsz,zgs,szg\}=\{R,T,S\}_{\{g,s,z\}}\subseteq \cd_{\{g,s,z\}}$ which does not satisfy any never-condition. This contradicts that $\cd$ is a peak-pit Condorcet domain.
If $S\in\{zsfg,szfg,sfzg\}$, then $S_{\{f,g,s\}}=sfg$.
Hence, $\{fgs,gsf,sfg\}$= $\{R,T,S\}_{\{f,g,s\}}\subseteq \cd_{\{f,g,s\}}$ which does not satisfy any never-condition.

{\bf Case 2.} Suppose $T=zsgf$. Then $T_{\{f,g,s\}}=sgf$.
Since $R_{\{f,g,s\}}=fgs$ and $(f,g)$ appears first in $S(\cg)$ by assumption, we have $N_p(\{G_1,\ldots, G_k\})=\{gN_{\{f,g,s\}}3\}$ by Lemma~~\ref{2geodesics}\ref{2geodesicsa}.
However, $S_{\{f,g,s\}}\in\{sfg,fsg\}$ always rank $g$ last. 
So $S_{\{f,g,s\}}\in\cd_{\{f,g,s\}}$ does not satisfy $gN_{\{f,g,s\}}3$.
This contradicts that $\cd_{\{f,g,s\}}\cup\{G_1$, $\ldots$, $G_k\}$ is a peak-pit Condorcet domain.

Therefore, $sN_{\{f,s,z\}}1\in N_p(\cd_{\{f,s,z\}})$.
\end{proof}

\begin{lemma}\label{fgu}
Let $\cd$ be a peak-pit Condorcet domain on $\{f,g,u,z\}$ with  $R=fguz\in \cd$ and $T=zugf\in\cd$ such that
    $N_p(\cd_{\{g,u,z\}})=\{uN_{\{g,u,z\}}1\}$.
Let $\cg=(G_1,\ldots, G_k)$ be a geodesic on $\cl(\{f,g,u\})$ connecting $R_{\{f,g,u\}}$ and $T_{\{f,g,u\}}$ such that $\cd_{\{f,g,u\}}\cup\{G_1,\ldots, G_k\}$ is a peak-pit Condorcet domain and $(g,u)$ is the first switching pair in $S(\cg)$. Then $N_p(\cd_{\{f,u,z\}})=\{uN_{\{f,u,z\}}1\}$.
\end{lemma}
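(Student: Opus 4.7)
The plan is to proceed by contradiction, producing from the contrapositive assumption a linear order in $\cd$ whose restriction to $\{f,g,u\}$ violates the never-top condition that the geodesic $\cg$ forces on $\cd_{\{f,g,u\}}$. This parallels the structure of the companion Lemma~\ref{fgz2} but exploits a different restricted triple.

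First, since $fuz,\,zuf\in\cd_{\{f,u,z\}}$ (coming from $R=fguz$ and $T=zugf$), Lemma~\ref{wex0}\ref{wex0a} gives $\emptyset\neq N_p(\cd_{\{f,u,z\}})\subseteq\{uN_{\{f,u,z\}}1,\,uN_{\{f,u,z\}}3\}$, so it suffices to show $uN_{\{f,u,z\}}3\notin N_p(\cd_{\{f,u,z\}})$. Next I would decode the hypothesis on $\cg$: because $R_{\{f,g,u\}}=fgu$ and $T_{\{f,g,u\}}=ugf$ differ by three swaps of adjacent alternatives and $(g,u)$ appears first, Lemma~\ref{2geodesics}\ref{2geodesicsb} (with $a\mapsto f$, $b\mapsto g$, $c\mapsto u$) forces $\cg=(fgu,fug,ufg,ugf)$ and $N(K(\cg))=\{gN_{\{f,g,u\}}1\}$. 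The orders $fug$ and $ufg$ in $K(\cg)$ both rank $g$ last, so the union $\cd_{\{f,g,u\}}\cup K(\cg)$ cannot satisfy $gN_{\{f,g,u\}}3$; since Lemma~\ref{wex0}\ref{wex0a} applied to $fgu,\,ugf$ limits the available peak-pit conditions to $gN_{\{f,g,u\}}1$ and $gN_{\{f,g,u\}}3$, the union satisfies exactly $gN_{\{f,g,u\}}1$. In particular $\cd_{\{f,g,u\}}$ itself satisfies $gN_{\{f,g,u\}}1$: no order of $\cd$ can rank $g$ top among $\{f,g,u\}$.

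Now suppose for contradiction that $uN_{\{f,u,z\}}3\in N_p(\cd_{\{f,u,z\}})$, so $\cd_{\{f,u,z\}}\subseteq\{fuz,\,zuf,\,ufz,\,uzf\}$. Since $N_p(\cd_{\{g,u,z\}})=\{uN_{\{g,u,z\}}1\}$ omits the never-bottom condition on $u$, there exists $S\in\cd$ with $S_{\{g,u,z\}}\in\{gzu,\,zgu\}$; in either case $S$ places $z$ above $u$. Among the four orders permitted for $S_{\{f,u,z\}}$, only $zuf$ has $z$ above $u$, so $S$ satisfies $z>u>f$. Combining with $g>z>u$ yields $S=gzuf$, and combining with $z>g>u$ yields $S=zguf$; in both cases $S_{\{f,g,u\}}=guf$, which ranks $g$ first and therefore contradicts the never-top condition $gN_{\{f,g,u\}}1$ established above. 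Hence $uN_{\{f,u,z\}}3\notin N_p(\cd_{\{f,u,z\}})$, and the desired equality $N_p(\cd_{\{f,u,z\}})=\{uN_{\{f,u,z\}}1\}$ follows.

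The step requiring the most care is the bookkeeping in the final paragraph, where one must merge two independent constraints on $S$, one coming from the contradiction hypothesis on the triple $\{f,u,z\}$ and one from the given structure of $N_p(\cd_{\{g,u,z\}})$, to pin $S$ down uniquely in each subcase and recognise that both resulting linear orders produce the same forbidden restriction $guf$. The remainder of the argument is a routine unpacking of Lemmas~\ref{wex0} and \ref{2geodesics}.
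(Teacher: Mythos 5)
Your proposal is correct and follows essentially the same route as the paper's proof: reduce via Lemma~\ref{wex0}\ref{wex0a} to excluding $uN_{\{f,u,z\}}3$, use the hypothesis on $\cd_{\{g,u,z\}}$ to obtain $S\in\cd$ with $S_{\{g,u,z\}}\in\{gzu,zgu\}$, force $S_{\{f,u,z\}}=zuf$ under the contradiction hypothesis so that $S\in\{gzuf,zguf\}$ and $S_{\{f,g,u\}}=guf$, and contradict the never-top condition coming from the never-top geodesic $\cg=(fgu,fug,ufg,ugf)$ of Lemma~\ref{2geodesics}\ref{2geodesicsb}. The only cosmetic difference is that you extract the condition $gN_{\{f,g,u\}}1$ on $\cd_{\{f,g,u\}}$ explicitly before deriving the contradiction, whereas the paper phrases the contradiction directly in terms of $\{guf\}\cup K(\cg)$ lying in the peak-pit domain $\cd_{\{f,g,u\}}\cup K(\cg)$.
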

\begin{proof}
Since $R=fguz$ and $T=zugf$, we have $R_{\{f,u,z\}}=fuz$ and $T_{\{f,u,z\}}=zuf$.
Hence, $\emptyset\ne N_p(\cd_{\{f,u,z\}})\subseteq \{uN_{\{f,u,z\}}1$, $uN_{\{f,u,z\}}3\}$ due to Lemma~\ref{wex0}\ref{wex0a}. 
Suppose for a contradiction that $N_p(\cd_{\{f,u,z\}})\ne\{uN_{\{f,u,z\}}1\}$.
Then we have two cases for $N_p(\cd_{\{f,u,z\}})$.
If $N_p(\cd_{\{f,u,z\}})=\{uN_{\{f,u,z\}}3\}$, then $\cd_{\{f,u,z\}}\subseteq\{fuz,ufz,uzf,zuf\}$.
If $N_p(\cd_{\{f,u,z\}})=\{uN_{\{f,u,z\}}3,uN_{\{f,u,z\}}1\}$.
Then $\cd_{\{f,u,z\}}$ = $\{fuz$, $zuf\}\subset\{fuz$, $ufz$, $uzf$, $zuf\}$ by Lemma~\ref{wex0}\ref{wex0b}.
So it either case, $\cd_{\{f,u,z\}}\subseteq\{fuz,ufz,uzf,zuf\}$.

Now $N_p(\cd_{\{g,u,z\}})=\{uN_{\{g,u,z\}}1\}$ by assumption and therefore $\cd_{\{g,u,z\}}$ does not satisfy $uN_{\{g,u,z\}}3$. 
Hence, there is a linear order $S\in\cd$ such that $S_{\{g,u,z\}}$ is either $gzu$ or $zgu$. 
In particular, $S_{\{g,u\}}=gu$ and $S_{\{z,u\}}=zu$.
Since $\cd_{\{f,u,z\}}\subseteq\{fuz,ufz,uzf,zuf\}$, the only linear order that can possibly be in $\cd_{\{z,u,f\}}$ and ranks $z$ before $u$ is $zuf$ and so $S_{\{f,u,z\}}=zuf$. 
Recall that $S_{\{g,u\}}=gu$.
Hence, $S_{\{f,g,u,z\}}$ is either $zguf$ or $gzuf$. 
Therefore, in both cases, $guf= S_{\{f,g,u\}}\in \cd_{\{f,g,u\}}$.

Recall $T=zugf$, so $T_{\{f,g,u\}}=ugf$.
Since $(g,u)$ appears first in $S(\cg)$, we have a never-top geodesic $\cg=(fgu, fug,ufg,ugf)$ due to Lemma~\ref{2geodesics}\ref{2geodesicsb}.
Since $guf=S_{\{f,g,u\}}\in\cd_{\{f,g,u\}}$, we have a contradiction that $\{guf\}\cup \{ fgu, fug,ufg,ugf\}\subset\cd_{\{f,g,u\}}\cup\{G_1,\ldots, G_k\}$ is a peak-pit Condorcet domain.
Thus,  $N_p(\cd_{\{f,u,z\}})=\{uN_{\{f,u,z\}}1\}$.
\end{proof}

We next examine two weaker cases where a domain for a triple of alternatives satisfies a never-top condition, then for a relevant triple, it can also satisfy a never-top condition.

\begin{lemma}\label{c211}
    Let $\cd$ be a peak-pit Condorcet domain on $\{f,g,s,z\}$ with  $R=sfgz\in \cd$ and $T\in\cd$ such that $T\in\{zgsf,zgfs\}$ and
    $gN_{\{f,g,z\}}1\in N_p(\cd_{\{f,g,z\}})$.
Let $\cg=(G_1,\ldots, G_k)$ be a geodesic on $\cl(\{f,g,s\})$ connecting $R_{\{f,g,s\}}$ and $T_{\{f,g,s\}}$ such that $\cd_{\{f,g,s\}}\cup\{G_1,\ldots, G_k\}$ is a peak-pit Condorcet domain and $(f,g)$ is the first switching pair in $S(\cg)$. Then $gN_{\{g,s,z\}}1\in N_p(\cd_{\{g,s,z\}})$.
\end{lemma}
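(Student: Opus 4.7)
The strategy is proof by contradiction: suppose $gN_{\{g,s,z\}}1 \notin N_p(\cd_{\{g,s,z\}})$ and derive a contradiction by producing a linear order $S \in \cd$ whose restriction $S_{\{f,g,s\}}$, together with $K(\cg)$, exhausts all three positions for each of $f,g,s$ and so admits no never-condition on $\{f,g,s\}$.

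First I would use Lemma~\ref{wex0}\ref{wex0a}. Since $R_{\{g,s,z\}}=sgz$ and $T_{\{g,s,z\}}=zgs$ are reverses of each other and both lie in $\cd_{\{g,s,z\}}$, this lemma restricts $N_p(\cd_{\{g,s,z\}})$ to a nonempty subset of $\{gN_{\{g,s,z\}}1, gN_{\{g,s,z\}}3\}$; the hypothesis for contradiction then pins down $N_p(\cd_{\{g,s,z\}})=\{gN_{\{g,s,z\}}3\}$, and therefore some $S\in\cd$ satisfies $S_{\{g,s,z\}}\in\{gsz, gzs\}$. Next I would combine this with the assumption $gN_{\{f,g,z\}}1\in N_p(\cd_{\{f,g,z\}})$, which forces $\cd_{\{f,g,z\}}\subseteq\{fgz, fzg, zgf, zfg\}$. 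Since $S_{\{g,s,z\}}$ places $g$ before $z$, the only candidate in this admissible set with $g$ before $z$ is $fgz$, so $S_{\{f,g,z\}}=fgz$. Intersecting the two restrictions gives $S\in\{fgsz, fgzs\}$, and in particular $S_{\{f,g,s\}}=fgs$.

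I would then split on the two possibilities for $T$. If $T=zgsf$, then $T_{\{f,g,s\}}=gsf$ lies at distance two from $R_{\{f,g,s\}}=sfg$, and the unique geodesic connecting them starts with $(f,g)$ and equals $\cg=(sfg, sgf, gsf)$. Adjoining $S_{\{f,g,s\}}=fgs$ to $K(\cg)$ yields the four orders $sfg, sgf, gsf, fgs$ sitting inside $\cd_{\{f,g,s\}}\cup K(\cg)$; a direct inspection shows each of $f, g, s$ appears as top, middle, and bottom somewhere among these four orders, contradicting the peak-pit property of $\cd_{\{f,g,s\}}\cup K(\cg)$. If instead $T=zgfs$, then $T_{\{f,g,s\}}=gfs$ is the full reverse of $sfg$, and Lemma~\ref{2geodesics}\ref{2geodesicsb} tells me that the hypothesis ``$(f,g)$ is first'' identifies $\cg$ with the never-top geodesic $(sfg, sgf, gsf, gfs)$. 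Adjoining $fgs$ produces the five orders $sfg, sgf, gsf, gfs, fgs$, in which once again each of $f, g, s$ appears in every position, yielding the same contradiction.

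The principal difficulty is the second step: locating the order $S$ precisely by intersecting the two restriction constraints on $\cd_{\{f,g,z\}}$ and $\cd_{\{g,s,z\}}$, so that its further restriction $S_{\{f,g,s\}}=fgs$ is exactly the missing order that, when added to $K(\cg)$, destroys every remaining never-condition on $\{f,g,s\}$. After this is set up, the final contradiction in each case is a routine enumeration of top, middle, and bottom positions.
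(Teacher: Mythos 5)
Your proof is correct and follows essentially the same route as the paper: assuming $gN_{\{g,s,z\}}1$ fails, you pin down $S_{\{f,g,z\}}=fgz$ from $gN_{\{f,g,z\}}1$ and $S_{\{g,z\}}=gz$, conclude $S_{\{f,g,s\}}=fgs$, and then derive the same two contradictions for $T=zgsf$ and $T=zgfs$ (the latter via Lemma~\ref{2geodesics}\ref{2geodesicsb}). The only cosmetic differences are that your invocation of Lemma~\ref{wex0} on $\{g,s,z\}$ is unnecessary (failure of $gN_{\{g,s,z\}}1$ alone yields $S$), and in the first case the paper contradicts peak-pittedness of $\cd_{\{f,g,s\}}$ directly with the cycle $\{sfg,gsf,fgs\}$ rather than of $\cd_{\{f,g,s\}}\cup K(\cg)$.
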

\begin{proof}
    Suppose for a contradiction that $gN_{\{g,s,z\}}1\notin N_p(\cd_{\{g,s,z\}})$.
Then there is a linear order $S\in\cd$ such that 
    either $S_{\{g,s,z\}}=gsz$   or $S_{\{g,s,z\}}=gzs$.
In particular, $S_{\{g,s\}}=gs$ and $S_{\{g,z\}}=gz$.
On the other hand, since $R_{\{f,g,z\}}=fgz$ and $T_{\{f,g,z\}}\in\{zgsf,zgfs\}_{\{f,g,z\}}=\{zgf\}$, we have $\emptyset\ne N_p(\cd_{\{f,g,z\}})\subseteq \{gN_{\{f,g,z\}}1$, $gN_{\{f,g,z\}}3\}$ due to Lemma~\ref{wex0}\ref{wex0a}. 
Since $gN_{\{f,g,z\}}1\in N_p(\cd_{\{f,g,z\}})$ by assumption, there are two cases for $N_p(\cd_{\{f,g,z\}})$.
If $N_p(\cd_{\{f,g,z\}})=\{gN_{\{f,g,z\}}1\}$, then $\cd_{\{f,g,z\}}\subseteq\{fgz,fzg,zfg,zgf\}$.
If $N_p(\cd_{\{f,g,z\}})=\{gN_{\{f,g,z\}}3,gN_{\{f,g,z\}}1\}$, Lemma~\ref{wex0}\ref{wex0b} gives $\cd_{\{f,g,z\}}$ = $\{fgz,zgf\}\subset\{fgz,fzg,zfg,zgf\}$.
So in either case, $\cd_{\{f,g,z\}}\subseteq\{fgz$, $fzg$, $zfg$, $zgf\}$.
The only linear order that can possibly be in $\cd_{\{f,g,z\}}$ and ranks $g$ before $z$ is $fgz$ and so $S_{\{f,g,z\}}=fgz$. 
Recall that $S_{\{g,s\}}=gs$.
Hence, $S$ is either $fgzs$ or $fgsz$. 
Therefore, in both cases, $fgs= S_{\{f,g,s\}}\in \cd_{\{f,g,s\}}$.
We still have two cases for $T$.

{\bf Case 1.} Suppose $T=zgsf$. Then $\{sfg,gsf,fgs\}=\{R,T,S\}_{\{f,g,s\}}\subseteq \cd_{\{f,g,s\}}$ which does not satisfy any never-condition, contradicting that $\cd$ is a peak-pit Condorcet domain.

{\bf Case 2.} Suppose $T=zgfs$. Then $T_{\{f,g,s\}}=gfs$.
Since $(f,g)$ appears first in $S(\cg)$, we have a never-top geodesic $\cg=(sfg,sgf,gsf,gfs)$ due to Lemma~\ref{2geodesics}\ref{2geodesicsb}.
Since $fgs=S_{\{f,g,s\}}\in\cd_{\{f,g,s\}}$, we have a contradiction that $\{fgs\}\cup \{ sfg,sgf,gsf,gfs\}\subset\cd_{\{f,g,s\}}\cup\{G_1,\ldots, G_k\}$ is a peak-pit Condorcet domain.

Therefore, $gN_{\{g,s,z\}}1\in N_p(\cd_{\{g,s,z\}})$.
\end{proof}

\begin{lemma}\label{c22}
    Let $\cd$ be a peak-pit Condorcet domain on $\{f,g,s,z\}$ with  $R=fsgz\in \cd$ and $T=zgsf\in\cd$ such that $gN_{\{f,g,z\}}1\in N_p(\cd_{\{f,g,z\}})$.
Let $\cg=(G_1,\ldots, G_k)$ be a geodesic on $\cl(\{f,g,s\})$ connecting $R_{\{f,g,s\}}$ and $T_{\{f,g,s\}}$ such that $\cd_{\{f,g,s\}}\cup\{G_1,\ldots, G_k\}$ is a peak-pit Condorcet domain. 
Then
\begin{tabel}
\item\label{c221} If $(f,s)\vartriangleleft(f,g)\vartriangleleft(s,g)$ in $S(\cg)$, then $gN_{\{g,s,z\}}1\in N_p(\cd_{\{g,s,z\}})$.
\item\label{c222} If $(s,g)\vartriangleleft(f,g)\vartriangleleft(f,s)$ in $S(\cg)$, then $sN_{\{f,s,z\}}1\in N_p(\cd_{\{f,s,z\}})$.
\end{tabel}
\end{lemma}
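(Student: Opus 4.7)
The plan is to prove both statements by contradiction. In each case I would assume the conclusion fails, extract a linear order $S \in \cd$ witnessing this failure, and use the hypothesis $gN_{\{f,g,z\}}1 \in N_p(\cd_{\{f,g,z\}})$ together with the structure of $R = fsgz$ and $T = zgsf$ to pin down $S_{\{f,g,s\}}$. Finally, adding this restriction to the four-order set $K(\cg)$ would produce a five-element subset of $\cl(\{f,g,s\})$ that satisfies no never-top, never-middle, or never-bottom condition, contradicting the peak-pit hypothesis on $\cd_{\{f,g,s\}} \cup K(\cg)$. This mirrors the strategy already used in Lemma~\ref{c211} and Lemma~\ref{fgz2}.

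For part~\ref{c221}, Lemma~\ref{2geodesics}\ref{2geodesicsa}, applied with the relabelling $a=f$, $b=s$, $c=g$ (so that $abc = fsg$ and $cba = gsf$), forces $\cg = (fsg, sfg, sgf, gsf)$, a never-bottom geodesic whose set $K(\cg)$ satisfies the unique never-condition $sN_{\{f,g,s\}}3$. Assuming $gN_{\{g,s,z\}}1 \notin N_p(\cd_{\{g,s,z\}})$, there is $S \in \cd$ with $S_{\{g,s,z\}} \in \{gsz,gzs\}$; both place $g$ before $s$ and $g$ before $z$. The never-top condition $gN_{\{f,g,z\}}1$ applied to $S_{\{f,g,z\}}$ then forces $f$ before $g$, so $S_{\{f,g,s\}} = fgs$. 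A direct inspection of the first, middle and last positions of $\{fsg, sfg, sgf, gsf, fgs\}$ shows that every alternative occurs in every position, so this set satisfies no never-condition on $\{f,g,s\}$. Since it is contained in $\cd_{\{f,g,s\}} \cup K(\cg)$, we contradict peak-pittedness.

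For part~\ref{c222}, Lemma~\ref{2geodesics}\ref{2geodesicsb} instead gives $\cg = (fsg, fgs, gfs, gsf)$, a never-top geodesic whose set $K(\cg)$ satisfies the unique never-condition $sN_{\{f,g,s\}}1$. Assuming $sN_{\{f,s,z\}}1 \notin N_p(\cd_{\{f,s,z\}})$, there is $S \in \cd$ with $S_{\{f,s,z\}} \in \{sfz, szf\}$, so $s$ precedes $f$ in $S$. A short case analysis splits on whether $f$ or $z$ comes first in $S_{\{f,z\}}$ and then uses $gN_{\{f,g,z\}}1$ to locate $g$: one obtains $S_{\{f,g,s\}} \in \{sfg, sgf\}$. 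For each candidate, inspecting the three positions in the five-element set $K(\cg) \cup \{S_{\{f,g,s\}}\}$ shows that $f$, $g$ and $s$ each appear as first, middle and last, so this set satisfies no never-condition, again contradicting peak-pittedness.

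The main obstacle is the refined case split in part~\ref{c222}, where the position of $g$ relative to $z$ in $S$ is not uniquely determined by the hypotheses and produces two candidates for $S_{\{f,g,s\}}$. Both candidates must be killed, but once the structure is set up the killing step is routine because everything reduces to checking first/middle/last positions on three alternatives. The part~\ref{c221} argument is strictly easier, since there the restriction $S_{\{f,g,s\}}$ is pinned down uniquely as $fgs$.
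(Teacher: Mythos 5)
Your proposal is correct and follows essentially the same route as the paper: in each part you identify the geodesic via Lemma~\ref{2geodesics} with the relabelling $a=f$, $b=s$, $c=g$, extract a witness $S\in\cd$ from the assumed failure, use $gN_{\{f,g,z\}}1$ to force $S_{\{f,g,s\}}=fgs$ in part~\ref{c221} (resp.\ $S_{\{f,g,s\}}\in\{sfg,sgf\}$, i.e.\ $s$ first, in part~\ref{c222}), and contradict the peak-pit hypothesis on $\cd_{\{f,g,s\}}\cup K(\cg)$. The only cosmetic difference is that in part~\ref{c222} the paper observes directly that $s$ must be ranked first in $S$ (killing the unique never-condition $sN_{\{f,g,s\}}1$ of $K(\cg)$) rather than enumerating the two candidates, but this does not change the argument.
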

\begin{proof}
Since  $R=fsgz$ and $T=zgsf$, we have $R_{\{f,g,z\}}=fgz$ and $T_{\{f,g,z\}}=zgf$. 
Then $\emptyset\ne N_p(\cd_{\{f,g,z\}})\subseteq \{gN_{\{f,g,z\}}1$, $gN_{\{f,g,z\}}3\}$ by Lemma~\ref{wex0}\ref{wex0a}. 
Since $gN_{\{f,g,z\}}1\in N_p(\cd_{\{f,g,z\}})$ is given, two cases for $N_p(\cd_{\{f,g,z\}})$.
If $N_p(\cd_{\{f,g,z\}})=\{gN_{\{f,g,z\}}1\}$, then $\cd_{\{f,g,z\}}\subseteq\{fgz$, $fzg$, $zfg$, $zgf\}$.
If $N_p(\cd_{\{f,g,z\}})=\{gN_{\{f,g,z\}}3$, $gN_{\{f,g,z\}}1\}$, then Lemma~\ref{wex0}\ref{wex0b} gives $\cd_{\{f,g,z\}}$ = $\{fgz,zgf\}\subset\{fgz$, $fzg$, $zfg$, $zgf\}$.
So in either case, $\cd_{\{f,g,z\}}\subseteq\{fgz$, $fzg$, $zfg$, $zgf\}$.

(a) Suppose for a contradiction that $gN_{\{g,s,z\}}1\notin N_p(\cd_{\{g,s,z\}})$.
Then there is a linear order $S\in\cd$ such that 
    either $S_{\{g,s,z\}}=gsz$   or $S_{\{g,s,z\}}=gzs$.
In particular, $S_{\{g,s\}}=gs$ and $S_{\{g,z\}}=gz$.
Since $\cd_{\{f,g,z\}}\subseteq\{fgz,fzg,zfg,zgf\}$, the only linear order that can possibly be in $\cd_{\{f,g,z\}}$ and ranks $g$ before $z$ is $fgz$ and so $S_{\{f,g,z\}}=fgz$. 
Recall that $S_{\{g,s\}}=gs$.
Hence, $S$ is either $fgzs$ or $fgsz$. 
Therefore, in both cases, $fgs= S_{\{f,g,s\}}\in \cd_{\{f,g,s\}}$.
Moreover, since $(f,s)\vartriangleleft(f,g)\vartriangleleft(s,g)$ in $S(\cg)$, as shown in Figure~\ref{ecp3}, we have a never-bottom geodesic $\cg_{\{f,g,s\}}=(fsg,sfg,sgf,gsf)$ due to Lemma~\ref{2geodesics}\ref{2geodesicsa}.
Then we have a contradiction that $\{fgs\}\cup \{fsg,sfg,sgf,gsf\}\subset\cd_{\{f,g,s\}}\cup\{G_1,\ldots, G_k\}$ is a peak-pit Condorcet domain.

\begin{figure}[H]
     \centering
\begin{tikzpicture}[scale=0.60]

\draw[fill=black] (3,0) circle (3pt);
\draw[fill=black] (3,2) circle (3pt);
\draw[fill=black] (3,4) circle (3pt);
\draw[fill=black] (3,6) circle (3pt);
\draw[fill=black] (22,0) circle (3pt);
\draw[fill=black] (22,2) circle (3pt);
\draw[fill=black] (22,4) circle (3pt);
\draw[fill=black] (22,6) circle (3pt);
 
\node at (2.5,0) {$z$};
\node at (2.5,2) {$g$};
\node at (2.5,4) {$s$};
\node at (2.5,6) {$f$};
\node at (22.5,0) {$f$};
\node at (22.5,2) {$s$};
\node at (22.5,4) {$g$};
\node at (22.5,6) {$z$};

\node at (9.5,1) {$(g,z)$};
\node at (15.5,5) {$(s,z)$};
\node at (12.5,3) {$(f,z)$};
\node at (6.5,5) {$(f,s)$};
\node at (18.5,1) {$(f,g)$};
\node at (21.5,3) {$(s,g)$};

\draw[thick,dashed] (3,0) -- (7,0)--(9,2)--(10,2) --(12,4)--(13,4)--(15,6)--(19,6)--(22,6);
\draw[thick] (3,2)--(7,2) --(9,0)--(16,0)--(18,2)--(19,2)--(21,4)--(22,4);
\draw[thick] (3,4)--(4,4)--(6,6)--(13,6)--(15,4)--(19,4)--(21,2)--(22,2);
\draw[thick] (3,6)--(4,6)--(6,4)--(10,4)-- (12,2) --(16,2)--(18,0)-- (19,0)--(22,0);

\end{tikzpicture}
     \caption{Case (a).} \label{ecp3}
\end{figure} 

(b) Suppose for a contradiction that $sN_{\{f,s,z\}}1\notin N_p(\cd_{\{f,s,z\}})$.
Then there is a linear order $S\in\cd$ such that 
    either $S_{\{f,s,z\}}=sfz$ or $S_{\{f,s,z\}}=szf$.
In particular, $S_{\{f,s\}}=sf$ and $S_{\{s,z\}}=sz$.
Since $\cd_{\{f,g,z\}}\subseteq\{fgz,fzg,zfg,zgf\}$ and the top alternative in every linear order in $\cd_{\{f,g,z\}}$ is $f$ or $z$, the linear order $S$ always ranks $s$ first.
So $S_{\{f,g,s\}}\in\cd_{\{f,g,s\}}$ does not satisfy $sN_{\{f,g,s\}}1$.
However, since $(s,g)\vartriangleleft(f,g)\vartriangleleft(f,s)$ in $S(\cg)$, as shown in Figure~\ref{ecp3.1}, we have 
$N_p(\{G_1,\ldots, G_k\})=\{sN_{\{f,g,s\}}1\}$ by Lemma~\ref{2geodesics}\ref{2geodesicsb}.
This contradicts that $\cd_{\{f,g,s\}}\cup\{G_1$, $\ldots$, $G_k\}$ is a peak-pit Condorcet domain.
\end{proof}
\begin{figure}[H]
     \centering
\begin{tikzpicture}[scale=0.60]

\draw[fill=black] (3,0) circle (3pt);
\draw[fill=black] (3,2) circle (3pt);
\draw[fill=black] (3,4) circle (3pt);
\draw[fill=black] (3,6) circle (3pt);
\draw[fill=black] (22,0) circle (3pt);
\draw[fill=black] (22,2) circle (3pt);
\draw[fill=black] (22,4) circle (3pt);
\draw[fill=black] (22,6) circle (3pt);
 
\node at (2.5,0) {$z$};
\node at (2.5,2) {$g$};
\node at (2.5,4) {$s$};
\node at (2.5,6) {$f$};
\node at (22.5,0) {$f$};
\node at (22.5,2) {$s$};
\node at (22.5,4) {$g$};
\node at (22.5,6) {$z$};

\node at (12.5,3) {$(g,z)$};
\node at (9.5,1) {$(s,z)$};
\node at (15.5,5) {$(f,z)$};
\node at (6.5,3) {$(s,g)$};
\node at (18.5,3) {$(f,g)$};
\node at (21.5,1) {$(f,s)$};

\draw[thick,dashed] (3,0) -- (7,0)--(9,2)--(10,2) --(12,4)--(13,4)--(15,6)--(19,6)--(22,6);
\draw[thick] (3,2)--(4,2) --(6,4)--(10,4)--(12,2)--(16,2)--(18,4)--(19,4)--(22,4);
\draw[thick] (3,4)--(4,4)--(6,2)--(7,2)--(9,0)--(19,0)--(21,2)--(22,2);
\draw[thick] (3,6)--(13,6)--(15,4)-- (16,4)--(18,2)-- (19,2)--(21,0)--(22,0);

\end{tikzpicture}
     \caption{Case (b).} \label{ecp3.1}
\end{figure} 

At last, we need another technical lemma that involves with a never-bottom condition, which will be used as a contradiction for a case analysis in the next section.
Lemmas~\ref{fgz2} and \ref{fgu}, 
serve the following lemma.

\begin{lemma}\label{c232.1}
    Let $\cd\subseteq\cl(B)$ be a peak-pit Condorcet domain. Let $R,T\in\cd$ be distinct linear orders, and $a,b,f,s,z\in B$ be distinct alternatives. 
    Suppose $R_{\{a,b,f,s,z\}}=fabsz$ and $T_{\{a,b,f,z\}}=zbaf$. 
    Suppose $sN_{\{f,s,z\}}1\notin N_p(\cd_{\{f,s,z\}})$.
Let $\cg=(G_1,\ldots, G_k)$ be a geodesic on $\cl(B\setminus\{z\})$ connecting $R_{B\setminus\{z\}}$ and $T_{B\setminus\{z\}}$ such that $\cd_{B\setminus\{z\}}\cup\{G_1,\ldots, G_k\}$ is a peak-pit Condorcet domain.
Suppose  $(a,b)\vartriangleleft(f,b)\vartriangleleft(f,a)\vartriangleleft(f,s)$ in $S(\cg)$.
     Then $bN_{\{a,b,z\}}3\in N_p(\cd_{\{a,b,z\}})$.
     \end{lemma}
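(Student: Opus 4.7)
The plan is to proceed by contradiction: assume $bN_{\{a,b,z\}}3\notin N_p(\cd_{\{a,b,z\}})$. Since $R_{\{a,b,z\}}=abz$ and $T_{\{a,b,z\}}=zba$ both lie in $\cd_{\{a,b,z\}}$, Lemma~\ref{wex0}\ref{wex0a} confines $N_p(\cd_{\{a,b,z\}})$ to a non-empty subset of $\{bN_{\{a,b,z\}}1,bN_{\{a,b,z\}}3\}$, so the contradiction hypothesis forces $N_p(\cd_{\{a,b,z\}})=\{bN_{\{a,b,z\}}1\}$. This yields some $S\in\cd$ with $S_{\{a,b,z\}}\in\{azb,zab\}$, in particular $S_{\{a,b\}}=ab$.

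I will then apply Lemma~\ref{fgu} under the substitution $(f,g,u,z)\leftrightarrow(f,a,b,z)$, using the restricted geodesic $\cg_{\{f,a,b\}}$. The hypotheses $R_{\{f,a,b,z\}}=fabz$, $T_{\{f,a,b,z\}}=zbaf$, the previously established $N_p(\cd_{\{a,b,z\}})=\{bN_{\{a,b,z\}}1\}$, and the fact that $(a,b)$ is the first switching pair in $\cg_{\{f,a,b\}}$ (inherited from the chain $(a,b)\vartriangleleft(f,b)\vartriangleleft(f,a)\vartriangleleft(f,s)$) deliver $N_p(\cd_{\{f,b,z\}})=\{bN_{\{f,b,z\}}1\}$. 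Next, since $(f,s)$ occurs exactly once in $\cg$ by Lemma~\ref{wex1.1}\ref{wex1.1a} and $R_{\{f,s\}}=fs$, one has $T_{\{f,s\}}=sf$; combined with $T_{\{a,b,f,z\}}=zbaf$ this leaves exactly three possibilities $T_{\{f,b,s,z\}}\in\{zbsf,zsbf,szbf\}$, which I will handle as two cases.

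In the first case $T_{\{f,b,s,z\}}\in\{zbsf,zsbf\}$, I will invoke Lemma~\ref{fgz2} under the substitution $(f,g,s,z)\leftrightarrow(f,b,s,z)$ to conclude $sN_{\{f,s,z\}}1\in N_p(\cd_{\{f,s,z\}})$, directly contradicting the lemma's hypothesis. The only nontrivial verification is that $(f,b)$ is the first switching pair of $\cg_{\{f,b,s\}}$: the only other candidate is $(b,s)$, but if $(b,s)$ appears, Lemma~\ref{2geodesics} applied to $\cg_{\{f,b,s\}}$ forces the never-bottom orientation since $(f,b)\vartriangleleft(f,s)$ in $\cg$, placing $(b,s)$ last.

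In the second case $T_{\{f,b,s,z\}}=szbf$, the contradiction will come from a different restriction. Here $T_{\{a,b,s\}}=sba$ is the reverse of $R_{\{a,b,s\}}=abs$, and since $(a,b)$ is the first switching pair in $\cg_{\{a,b,s\}}$, Lemma~\ref{2geodesics}\ref{2geodesicsa} identifies $\cg_{\{a,b,s\}}$ as the never-bottom geodesic and forces $\cd_{\{a,b,s\}}$ to satisfy $bN_{\{a,b,s\}}3$. Combined with $S_{\{a,b\}}=ab$, the only option is $S_{\{a,b,s\}}=abs$, hence $b<s$ in $S$; a short check on the two subcases $S_{\{a,b,z\}}\in\{azb,zab\}$ yields $z<s$ in $S$, whence $S_{\{b,s,z\}}=zbs$. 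Thus $\cd_{\{b,s,z\}}$ contains the three orders $bsz,\, szb,\, zbs$, a set in which every alternative occupies each of the three positions; no never-condition holds, contradicting the fact that $\cd_{\{b,s,z\}}$ is a Condorcet domain. I expect the most delicate step to be the bookkeeping that ensures the first-switching-pair conditions required by Lemmas~\ref{fgu} and~\ref{fgz2} are correctly inherited by the triple- and quadruple-restrictions of $\cg$, which is where the orderings forced by Lemma~\ref{2geodesics} play the central role.
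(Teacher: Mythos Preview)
Your proposal is correct and follows the same backbone as the paper: assume $bN_{\{a,b,z\}}3\notin N_p(\cd_{\{a,b,z\}})$, deduce $N_p(\cd_{\{a,b,z\}})=\{bN_{\{a,b,z\}}1\}$ via Lemma~\ref{wex0}\ref{wex0a}, apply Lemma~\ref{fgu} (with the substitution $(f,g,u)\mapsto(f,a,b)$) to obtain $N_p(\cd_{\{b,f,z\}})=\{bN_{\{b,f,z\}}1\}$, and then invoke Lemma~\ref{fgz2} (with $(f,g,s)\mapsto(f,b,s)$) to force $sN_{\{f,s,z\}}1\in N_p(\cd_{\{f,s,z\}})$, contradicting the hypothesis. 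Your verification that $(a,b)$ is first in $S(\cg_{\{f,a,b\}})$ and that $(f,b)$ is first in $S(\cg_{\{f,b,s\}})$ matches the paper's reasoning.

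Where you go further is in handling $T_{\{f,b,s,z\}}=szbf$. The paper's proof simply asserts $T_{\{f,s,z\}}=zsf$ and hence restricts attention to $T_{\{b,f,s,z\}}\in\{zbsf,zsbf\}$; but from the lemma's stated hypotheses one only knows $T_{\{f,z\}}=zf$ and $T_{\{f,s\}}=sf$, leaving $T_{\{s,z\}}$ undetermined. In the paper's sole application of this lemma (Case~2.3.2 of Proposition~\ref{conex2}) the relation $T_{\{s,z\}}=zs$ does hold, so the omission is harmless there, but your additional case legitimately closes the proof of the lemma \emph{as stated}. Your argument for that case is sound: from $(f,b)\vartriangleleft(f,s)$ the dichotomy of Lemma~\ref{2geodesics} on $\{f,b,s\}$ gives $(b,s)$ after $(f,s)$, hence after $(a,b)$, so $(a,b)$ is first in $S(\cg_{\{a,b,s\}})$; then $\cd_{\{a,b,s\}}$ must satisfy $bN_{\{a,b,s\}}3$, forcing $S_{\{a,b,s\}}=abs$, whence $S_{\{b,s,z\}}=zbs$ and the cyclic triple $\{bsz,szb,zbs\}\subseteq\cd_{\{b,s,z\}}$ violates every never-condition. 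You might make explicit the step $(a,b)\vartriangleleft(b,s)$ via the $\{f,b,s\}$ dichotomy, since that is what guarantees $(a,b)$ is first in $\cg_{\{a,b,s\}}$.
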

\begin{proof}
Note that $R_{\{a,b,f,s,z\}}=fabsz$ and $T_{\{a,b,f,z\}}=zbaf$.
Since $T_{\{f,s,z\}}=zsf$, we have $T_{\{b,f,s,z\}}\in\{zbsf, zsbf\}$.
Now $R_{\{a,b,z\}}=abz$ and $T_{\{a,b,z\}}=zba$, we have
$\emptyset\ne N_p(\cd_{\{a,b,z\}})\subseteq \{bN_{\{a,b,z\}}1$, $bN_{\{a,b,z\}}3\}$ due to Lemma~\ref{wex0}\ref{wex0a}.
Suppose for a contradiction that $bN_{\{a,b,z\}}3\notin N_p(\cd_{\{a,b,z\}})$.
Then $N_p(\cd_{\{a,b,z\}})=\{bN_{\{a,b,z\}}1\}$.
Hence, $N_p(\cd_{\{b,f,z\}})=\{bN_{\{b,f,z\}}1\}$ due to  Lemma~\ref{fgu}.
Next suppose that the switching pair $(b,s)$ occurs in $S(\cg_{\{b,f,s\}})$. Now $R_{\{b,f,s\}}=fbs$.
Since $(f,b)\vartriangleleft(f,s)$, the situation of Lemma~\ref{2geodesics}\ref{2geodesicsb} does not occur and hence by Lemma~\ref{2geodesics}\ref{2geodesicsa}, one has $(f,b)\vartriangleleft(f,s)\vartriangleleft(b,s)$.
Hence, $(b,s)$ is after $(f,b)$ if it occurs.
Therefore, in any case, $(f,b)$ is the first switching pair in $S(\cg_{\{b,f,s\}})$.
Then $sN_{\{f,s,z\}}1\in N_p(\cd_{\{f,s,z\}})$ due to Lemma~\ref{fgz2}. This contradicts the assumption.
Therefore, $bN_{\{a,b,z\}}3\in N_p(\cd_{\{a,b,z\}})$.
\end{proof}

\section{Proof for Proposition \ref{conex2}}

We now synthesise the ideas from the previous sections to prove  Proposition \ref{conex2}: {\em Let $\cd\subseteq\cl(A)$ be a peak-pit Condorcet domain. Let $R, T\in \cd$ be linear orders and $B\subseteq A$ be a subset such that $|B|\geq 3$. Then there exists a geodesic $\ca$  connecting $R_{B}$ and $T_{B}$ such that $\cd_{B}\cup K(\ca)$ is a peak-pit Condorcet domain}.

\begin{proof}
     Let $R, T\in \cd\subseteq \cl(A)$ be linear orders. Let $B\subseteq A$ be such that $|B|\geq 3$. We will use induction on the cardinality of $B$ to show that there exists a geodesic $\ca$  connecting $R_{B}$ and $T_{B}$ such that $\cd_{B}\cup K(\ca)$ is a peak-pit Condorcet domain. 
Suppose $|B|=3$. Then there exists a desired geodesic connecting $R_B$ and $T_B$ due to Lemma~\ref{wex1.2}\ref{wex1.2d}.

For the induction step, let $n\geq 4$ and assume that whenever $B\subseteq A$ and $|B|=n-1$, then there exists a geodesic $\ca$ connecting $R_B$ and $T_B$ such that $\cd_{B}\cup K(\ca)$ is a peak-pit Condorcet domain, with $k\geq 1$.
Now fix $B\subseteq A$ with $|B|=n$ and let $z$ be the alternative ranked last in $R_B$. Write $$T_B=\cdots z t_1\cdots t_q,$$ with $q\geq 0$.




Define the set $C$ of pairs in $\{t_1,\ldots, t_q\}^2$ by
 $$C=\{(a,b): a,b\in \{t_1,\ldots, t_q\},a\ne b, R_{\{a,b,z\}}=abz,\;\text{and}\; T_{\{a,b,z\}}=zba\}.$$
Note that $C=\emptyset$ when $q\leq 1$.
We want to emphasise that whenever $\cg$ is a geodesic connecting $R_{B\setminus\{z\}}$ and $T_{B\setminus\{z\}}$, then
\( C \) is a collection of pairs in \(\{t_1,\ldots, t_q\}^2\) that are switched by \(\cg\). The alternative \( z \) is included in the definition of \( C \) primarily for our later case analysis, as the positions of \( z \) in \( R_B \) and \( T_B \) are known.
Note that if $(a,b)\in C$, then $R_{\{a,b,z\}}=abz$ and $T_{\{a,b,z\}}=zba$ are in $\cd_{\{a,b,z\}}$. Hence, the set of peak-pit conditions satisfied by $\cd_{\{a,b,z\}}$ is a non-empty subset of $\{bN_{\{a,b,z\}}1, bN_{\{a,b,z\}}3\}$ due to Lemma~\ref{wex0}\ref{wex0a}. 
Further define the subset,
$$C_{NT}=\{(a,b)\in C: N_p(\cd_{\{a,b,z\}})=\{bN_{\{a,b,z\}}1\}.$$
Note that if $(a,b)\in C\setminus C_{NT}$, then $\cd_{\{a,b,z\}}$ satisfies $bN_{\{a,b,z\}}3$ due to Lemma~\ref{wex0}\ref{wex0a}, and possibly also $bN_{\{a,b,z\}}1$. Also note that the definitions of $C$ and $C_{NT}$ do not involve any geodesic $\cg$.
There are two cases depending on whether or not $C_{NT}$ is empty.


{\bf Case 1.} Suppose $|C_{NT}|=0$. By hypothesis there exists a geodesic $\cg=(G_1,\ldots, G_k)$ connecting $R_{B\setminus\{z\}}$ and $T_{B\setminus\{z\}}$ such that $\cd_{B\setminus\{z\}}\cup\{G_1,\ldots, G_k\}$ is a peak-pit Condorcet domain.
Suppose the sequence of switching pairs of alternatives for $\cg$ is
$$S(\cg): (r_1,s_1),\ldots, (r_{k-1}, s_{k-1}).$$
We write the switching pairs such that $R_{\{r_i,s_i\}}=r_is_i$ for all $i\in[k-1]$.
Note that the alternative $z$ does not occur in any of the switching pairs in $S(\cg)$.
Let $\cg^*=(G^*_1,\ldots,G^*_{k'})$ be characterised by 
$$S(\cg^*):(r_1,s_1),\ldots, (r_{k-1}, s_{k-1}),  (t_q,z), \ldots , (t_2,z), (t_1,z),$$
where $k'=k+q$. Note that if $q=0$, then $S(\cg^*)=S(\cg)$.
Since $\cg$ is a geodesic and each $(z,t_p)$ only occurs once in $\cg^*$, Lemma~\ref{wex1.1}\ref{wex1.1b} gives that $\cg^*$ is a geodesic. 
It remains to show that $\cd_{B}\cup\{G^*_1,\ldots,G^*_{k'}\}$ is a peak-pit Condorcet domain. Let $a,b,c\in B$ be distinct alternatives. If $z\notin\{a,b,c\}$, then again $(\cd_B\cup\{G^*_1,\ldots,G^*_{k'}\})_{\{a,b,c\}}=(\cd_{B\setminus\{z\}}\cup\{G_1,\ldots,G_k\})_{\{a,b,c\}}$ is a peak-pit Condorcet domain by hypothesis. 
Suppose $z\in\{a,b,c\}$, say $z=c$. 
Suppose 
$(a,b)\notin C$ and $(b,a)\notin C$, then $R_{\{a,b,z\}}$ and $T_{\{a,b,z\}}$ differ by at most two swaps of adjacent alternatives. Hence, by Lemma~\ref{wex1.2}\ref{wex1.2b}, $(\cd_{B}\cup\{G^*_1,\ldots,G^*_{k'}\})_{\{a,b,c\}}$ is a peak-pit Condorcet domain. 

Otherwise, without loss of generality, suppose $(a,b)\in C$. 
Then $\cg^*_{\{a,b,z\}}$ is a geodesic connecting $R_{\{a,b,z\}}=abz$ and $T_{\{a,b,z\}}=zba$. 
Note that there are only two geodesics connecting $abz$ and $zba$ due to Lemma~\ref{2geodesics}, which are given by the sequences of switching pairs $(a,b)\vartriangleleft(a,z)\vartriangleleft(b,z)$ and $(b,z)\vartriangleleft(a,z)\vartriangleleft(a,b)$, respectively.
Now the switching pairs involving the alternative $z$ are at the end of $S(\cg^*)$ by the definition of $S(\cg^*)$. Hence, $S(\cg^*_{\{a,b,z\}})$ is $(a,b)\vartriangleleft(a,z)\vartriangleleft(b,z)$.
Therefore, we have a never-bottom geodesic $\cg^*_{\{a,b,z\}}$ that satisfies the never-bottom condition $bN_{\{a,b,z\}}3$ due to Lemma~\ref{2geodesics}\ref{2geodesicsa}.
Since $C_{NT}=\emptyset$, the peak-pit Condorcet domain $\cd_{\{a,b,z\}}$ 
satisfies $bN_{\{a,b,z\}}3$ by Lemma~\ref{wex0}\ref{wex0a}.
Consequently, 
$(\cd_{B}\cup\{G^*_1,\ldots,G^*_{k'}\})_{\{a,b,z\}}$ satisfies $bN_{\{a,b,z\}}3$ and it is a peak-pit Condorcet domain. 
Thus, $\cg^*$ is a geodesic connecting $R_B$ and $T_B$ such that $\cd_{B}\cup\{G^*_1,\ldots,G^*_{k'}\}$ is a peak-pit Condorcet domain.

{\bf Case 2.} Suppose $|C_{NT}|\geq 1$.
Then there exists at least one element in $C_{NT}$.
If $\cg$ is a geodesic connecting $R_{B\setminus\{z\}}$ and $T_{B\setminus\{z\}}$ and if $(a,b)\in C_{NT}$, then the switching pair $(a,b)$ is in $S(\cg)$ since $R_{\{a,b\}}=ab$ and $T_{\{a,b\}}=ba$.
We need three lemmas to find a well-behaved geodesic $\cg=(G_1,\ldots, G_k)$ on $B\setminus\{z\}$ connecting $R_{B\setminus\{z\}}$ and $T_{B\setminus\{z\}}$ satisfying the following four conditions $A_1-A_4$, before continuing the proof of Proposition~\ref{conex2}.

\begin{itemize}
    \item \(\text{Condition}\; A_1 \): $\cd_{B\setminus\{z\}}\cup\{G_1,\ldots, G_k\}$ is a peak-pit Condorcet domain.
    \item \(\text{Condition}\;  A_2 \): Let $(d,e)$ be the first switching pair in $S(\cg)$ among all switching pairs in $C_{NT}$. Let $L=\cdots t'_1\cdots t'_{q}\in\cl(B\setminus\{z\})$ be the linear order in $\cg$ which is immediately before the swap $(d,e)$ in $\cg$, then $\{t'_1,\ldots, t'_{q}\}=\{t_1,\ldots, t_{q}\}$. 
    \item \(\text{Condition}\;  A_3 \): All switching pairs in $\cg'$ which occur after $(d,e)$ are in $C$.
    \item \(\text{Condition}\;  A_4 \): All switching pairs in $\cg'$, which are after $(d,e)$, are in $C_{NT}$ or not disjoint with at least one switching pair between $(d,e)$ and the given switching pair, including $(d,e)$.
\end{itemize}

\begin{lemma}\label{conex2.1}
    Adopt the above notation and assumption. 
    There exists a geodesic $\cg=(G_1,\ldots, G_k)$ on $B\setminus\{z\}$ connecting $R_{B\setminus\{z\}}$ and $T_{B\setminus\{z\}}$ satisfying Conditions $A_1-A_2$.
\end{lemma}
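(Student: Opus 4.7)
By the induction hypothesis in the proof of Proposition~\ref{conex2}, applied to $B\setminus\{z\}$ (of cardinality $n-1\geq 3$), there exists a geodesic $\cg_0$ on $\cl(B\setminus\{z\})$ connecting $R_{B\setminus\{z\}}$ and $T_{B\setminus\{z\}}$ such that $\cd_{B\setminus\{z\}}\cup K(\cg_0)$ is a peak-pit Condorcet domain; this is Condition~$A_1$ for $\cg_0$. Since $|C_{NT}|\geq 1$ and every pair in $C_{NT}$ reverses its relative order between $R_{B\setminus\{z\}}$ and $T_{B\setminus\{z\}}$, each pair of $C_{NT}$ occurs in $S(\cg_0)$, so the first $C_{NT}$ switching pair in $\cg_0$ is well-defined.

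The plan is an extremal argument. Among all geodesics $\cg$ on $\cl(B\setminus\{z\})$ connecting $R_{B\setminus\{z\}}$ and $T_{B\setminus\{z\}}$ that satisfy~$A_1$, I choose one, call it $\cg$, for which the position in $S(\cg)$ of the first $C_{NT}$ switching pair is maximal. Write $(d,e)$ for this first $C_{NT}$ pair and $L$ for the order immediately preceding $(d,e)$ in $\cg$. I claim that the bottom $q$ alternatives of $L$ form exactly $\{t_1,\ldots,t_q\}$, which is Condition~$A_2$. If not, then some $t_i\in\{t_1,\ldots,t_q\}$ lies outside the bottom $q$ positions of $L$, so an external switching pair $(t_i,x)$ with $x\in(B\setminus\{z\})\setminus\{t_1,\ldots,t_q\}$ occurs in $S(\cg)$ strictly after $(d,e)$. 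I pick the earliest such external pair $(t_{i_0},x_0)$ after $(d,e)$ and construct a new geodesic $\cg'$ that still satisfies~$A_1$ but in which the first $C_{NT}$ pair sits at a strictly later position than in $\cg$, contradicting the maximality.

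The construction of $\cg'$ reorganises the switching pairs between $(d,e)$ and $(t_{i_0},x_0)$ via two kinds of local moves. Equivalence moves commute two adjacent disjoint switching pairs and preserve $A_1$ automatically by Lemma~\ref{wex1.3}\ref{wex1.3c}. Braid moves replace three consecutive switching pairs forming a triangle on a triple of alternatives by the other triple-geodesic supplied by Lemma~\ref{2geodesics}; Lemma~\ref{wex1.2}\ref{wex1.2c} guarantees that at least one of the two options is compatible with $\cd$ restricted to that triple. The technical Lemmas~\ref{conpropl1}, \ref{conpropl2}, and~\ref{conpropl3} will propagate the never-top condition $N_p(\cd_{\{d,e,z\}})=\{eN_{\{d,e,z\}}1\}$ defining $(d,e)\in C_{NT}$ across the relevant triples, forcing the compatible direction to be precisely the one needed to pull $(t_{i_0},x_0)$ leftward past each obstruction. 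Once $(t_{i_0},x_0)$ is brought to a position before $(d,e)$, noting that $(t_{i_0},x_0)\notin C\supseteq C_{NT}$, the first $C_{NT}$ pair in $\cg'$ remains $(d,e)$ but now occupies a strictly later position, yielding the desired contradiction. The main obstacle is the case analysis at each braid move, confirming that the direction selected by the peak-pit structure is the one that lets $(t_{i_0},x_0)$ advance; Lemmas~\ref{conpropl1}--\ref{conpropl3} are designed precisely to carry this analysis.
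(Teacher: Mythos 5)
Your overall scaffolding (induction hypothesis gives a geodesic with Condition $A_1$; an extremal/contradiction argument to force Condition $A_2$) is reasonable, but the core of your construction is only asserted, and the assertion hides exactly the difficulty the lemma exists to solve. You propose to drag a single external pair $(t_{i_0},x_0)$ leftward past $(d,e)$ using commutations plus ``braid moves'', citing Lemma~\ref{wex1.2}\ref{wex1.2c} for the braid steps. But Lemma~\ref{wex1.2}\ref{wex1.2c} only guarantees that \emph{at least one} of the two triple-geodesics is compatible with $\cd$ restricted to that triple; it may well be only the one you already have, in which case the braid move destroys Condition $A_1$. Your proof says the compatible direction is ``forced'' to be the advancing one by Lemmas~\ref{conpropl1}--\ref{conpropl3}, but gives no argument, and those lemmas were not built for that purpose: Lemma~\ref{conpropl1} is a statement about the swap closure $K_1$ of the specific alternative $w$ ranked just above $t_1$ in $T_{B\setminus\{z\}}$, showing that every pair of $K_1$ is \emph{disjoint} from every earlier pair of $K_2$, and Lemma~\ref{conpropl3} shows $C_{NT}\cap K_1=\emptyset$. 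Their whole point is to make the rearrangement possible using \emph{only} commutations of adjacent disjoint pairs (so that Lemma~\ref{wex1.3}\ref{wex1.3c} preserves $A_1$ and the never-conditions verbatim), which is how the paper's proof proceeds; they do not adjudicate braid directions.

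There is also a structural problem with moving one pair at a time. If $t_{i_0}\in\{d,e\}$, or if $(t_{i_0},x_0)$ is immediately preceded by a pair sharing an alternative with it, no commutation is available, and a braid move is only applicable after the three pairwise swaps of the relevant triple have themselves been made consecutive --- which requires further global rearrangement whose feasibility and compatibility with $\cd$ you have not established; moreover a braid move inside a block containing $(d,e)$ or another $C_{NT}$ pair can change which $C_{NT}$ pair comes first, so your extremal quantity need not strictly increase. The paper's proof avoids all of this by moving the \emph{entire} swap closure $K_1$ of $w$ (not one pair), after first proving $(d,e)\in K_2$ via Lemma~\ref{conpropl2} and $C_{NT}\cap K_1=\emptyset$ via Lemma~\ref{conpropl3}, so that every move is a commutation licensed by Lemma~\ref{conpropl1} and $A_1$ is preserved automatically. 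Unless you supply the missing case analysis showing that at every obstruction the $\cd$-compatible braid direction advances $(t_{i_0},x_0)$ and that $(d,e)$ remains the first $C_{NT}$ pair, your argument does not go through; as written it is a plan, not a proof.
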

\begin{proof}
By hypothesis there exists a geodesic $\cg=(G_1,\ldots, G_k)$ connecting $R_{B\setminus\{z\}}$ and $T_{B\setminus\{z\}}$ such that $\cd_{B\setminus\{z\}}\cup\{G_1,\ldots, G_k\}$ is a peak-pit Condorcet domain.
Suppose the sequence of switching pairs of alternatives for $\cg$ is
$$S(\cg): (r_1,s_1),\ldots, (r_{k-1}, s_{k-1}).$$
We write the switching pairs such that $R_{\{r_i,s_i\}}=r_is_i$ for all $i\in[k-1]$.
Note that the alternative $z$ does not occur in any of the switching pairs in $S(\cg)$.
Let $(d,e)$ be the first switching pair in $S(\cg)$ among all switching pairs in $C_{NT}$.
Let $L=\cdots t^*_1\cdots t^*_{q}$ be the linear order that is immediately before the swap $(d,e)$ in $\cg$.
If $\{t^*_1,\ldots, t^*_{q}\}=\{t_1,\ldots, t_{q}\}$, then we are done. Suppose $\{t^*_1,\ldots, t^*_{q}\}\ne \{t_1,\ldots, t_{q}\}$.
We only consider the switching pairs in $S(\cg)$ that occur after $(d,e)$. Define the set
$$K=\{(a,b): (d,e)\trianglelefteq (a,b)\}.$$ 

Write $\overline{B}=B\setminus\{z\}$ and let $w\in \overline{B}\setminus\{t_1\ldots, t_q\}$ be such that $$T_{\overline{B}}=\cdots wt_1\cdots t_q.$$ 
We will show that there is an equivalent geodesic $\cg'$ of $\cg$ such that if $L'$ is the linear order immediately before $(d,e)$ in $\cg'$, then for all $h\in \{t_1,\ldots, t_q\}$, we have $L'_{\{h,w\}}=wh$, and for all $h\in \overline{B}\setminus \{w,t_1,\ldots, t_q\}$, we have $L'_{\{h,w\}}=hw$. Hence, it is useful to define the set of alternatives that swap with $w$ after $(d,e)$ in $\cg$. 
Let $H$ be the swap set of $w$. In particular, 
$$H=\{h\in \overline{B}\setminus\{w\}: (h,w)\in K\}.$$

We first show that $H\ne\emptyset$.
Suppose $H=\emptyset$. If $p\in [q]$, then $T_{\{t_p,w\}}=wt_p$ and $(t_p,w)\notin T(\cg)$, so $L_{\{t_p,w\}}=wt_p$. Hence, $t_p$ is ranked after $w$ in $L$. If $v\in \overline{B}\setminus\{w,t_1,\ldots, t_q\}$, then $T_{\{v,w\}}=vw$ and $(v,w)\notin G$, so $L_{\{v,w\}}=vw$. Hence, $v$ is ranked before $w$ in $L$.
Therefore, the set of the last $q$ ranked alternatives in $L$ is $\{t_1,\ldots t_q\}$. 
However, we assumed that $\{t^*_1,\ldots, t^*_{q}\}\ne\{t_1,\ldots t_q\}$, so this gives a contradiction. Hence, $H\ne \emptyset$.

Let $K_1$ be the swap closure of $w$, in particular, 
\begin{align*}
    K_1=\{(a,b)\in K:\;&\text{there exist}\; h\in H\; \text{and}\; u\in \overline{B}\setminus{\{h\}}\;\text{such that}\;\\ &(h,u)\in K\;\text{and} \\
    & (a,b)\trianglelefteq (h,u)\trianglelefteq (h,w)\;\text{and}\;  \\
    & \{a,b\}\cap\{h,u\}\ne\emptyset\},
\end{align*}
and $$K_2=K\setminus K_1.$$ Note that if $h\in H$, then $(h,w)\in K_1$ by choosing $u=w$ in the definition of $K_1$.
Note that each switching pair in $K_1$ is disjoint from each switching pair in $K_2$ which appears before it in $S(\cg)$ due to Lemma~\ref{conpropl1}.

We will now show that $(d,e)\in K_2$. For this we first show that all the alternatives ranked after $e$ in $L$ are in $\{t_1,\ldots, t_q\}$.
Let $v$ be an alternative ranked after $e$ in $L$. If $v\in \overline{B}\setminus\{t_1,\ldots, t_q\}$, then  $L_{\{d,e,w\}}=dev$. and $T_{\{d,e,w\}}=ved$.
Since $(d,e)\in C_{NT}$, the sequence of switching pairs in $S(\cg_{\{d,e,v\}})$ is $(e,v)$, $(d,v)$, $(d,e)$ due to Lemma~\ref{conpropl2}, which contradicts the fact that $L$ is immediately before $(d,e)$. Therefore, all alternatives ranked after $e$ in $L$ are in $\{t_1,\ldots, t_q\}$.

Now suppose for a contradiction that $(d,e)\in K_1$. Then there exist $h\in H$ and $u\in  \overline{B}\setminus{\{h\}}$ such that $(h,u)\in K$ and $(d,e)\trianglelefteq (h,u)\trianglelefteq (h,w)$, and $\{d,e\}\cap \{h,u\}\ne\emptyset$.
Since $w\notin \{t_1,\ldots, t_q\}$, we have $L_{\{d,e,w\}}=wde$. Since $T_{\{d,e,w\}}=wed$, both $(d,w)$ and $(e,w)$ are not switching pairs in $K$ and so $d\ne h$ and $e\ne h$. Hence, $u\in \{d,e\}$.
Since $(h,w)\in K$, if $L_{\{d,e,h,w\}}=wdeh$, then $h\in \{t_1,\ldots, t_q\}$ and so $(h,w)$ is not a switching pair in $K$, a contradiction. Hence, $L_{\{d,e,h,w\}}=hwde$ or $L_{\{d,e,h,w\}}=whde$.
If $L_{\{d,e,h,w\}}=hwde$, then $h$ needs to swap with $w$ before it can swap with $d$ or $e$, so that $(h,w)\vartriangleleft (h,u)$, a contradiction.
Similarly, if $L_{\{d,e,h,w\}}=whde$, then after $h$ swaps with $d$ or $e$, it cannot swap with $w$ since $\cg$ is a geodesic. Hence, it is not possible that $(h,u)\vartriangleleft (h,w)$. Therefore, $(d,e)\in K_2$.

We now 
define a geodesic $\cg'$ that is equivalent to $\cg$ such that if $L'=\cdots t'_1\cdots t'_q$  is the linear order in $\cg'$ immediately before the swap $(d,e)$, then $\{t'_1,\ldots, t'_q\}=\{t_1, \ldots, t_q\}$.
List all switching pairs in \( K_1 \) as \((a_1, b_1)\vartriangleleft (a_2, b_2)\vartriangleleft \cdots \vartriangleleft (a_m, b_m)\). The construction of $\cg'$ is done by $m$ steps of repeatedly swapping between the switching pairs in $K$  in the following sense.
The first step is to swap \((a_1, b_1)\) with each of the switching pairs in \( K_2 \) which appear before \((a_1, b_1)\) in $S(\cg)$, so that \((a_1, b_1)\) eventually occupies the position immediately before \((d, e)\). 
Subsequently, swap \((a_2, b_2)\) with each of the switching pairs in \( K_2 \) which appear before \((a_2, b_2)\) in $S(\cg)$, so that \((a_2, b_2)\) eventually occupies the position between \((a_{1}, b_{1})\) and \((d, e)\). 
Continue this process for \((a_3, b_3)\), $\dots$, \((a_m, b_m)\) end up in the order immediately before $(d,e)$ and we obtained the geodesic \( \cg' \) which is equivalent to \(\cg\).

Let $L'=\cdots t'_1\cdots t'_q$  be the linear order in $\cg'$ immediately before the swap $(d,e)$. Let $h\in \overline{B}\setminus\{w\}$.

{\bf Case i.} Suppose $h\in\{t_1,\ldots, t_q\}\cap H$. Then $(h,w)\in K_1$. Hence in $\cg'$, the switching pair $(h,w)$ occurs before $(d,e)$. Since $T_{\{h,w\}}=wh$, one obtains that $L'_{\{h,w\}}=wh$.

{\bf Case ii.} Suppose $h\in\{t_1,\ldots, t_q\}\setminus H$. Then $(h,w)$ is not in $K$ by the definition of $H$. Since the set of switching pairs after $(d,e)$ in $\cg'$ is a subset of $K$, the switching pair $(h,w)$ does not occur after $(d,e)$ in $\cg'$. Since $T_{\{h,w\}}=wh$,
one obtains that $L'_{\{h,w\}}=wh$.

{\bf Case iii.} Suppose $h\in H\setminus\{t_1,\ldots, t_q\}$. Then $(h,w)\in K_1$. Hence in $\cg'$, the switching pair $(h,w)$ occurs before $(d,e)$. Since $T_{\{h,w\}}=hw$, one obtains that $L'_{\{h,w\}}=hw$.

{\bf Case iv.} Suppose $h\notin \{t_1,\ldots, t_q\}\cup H$. Then $(h,w)$ is not in $K$. Since the set of switching pairs after $(d,e)$ in $\cg'$ is a subset of $K$, the switching pair $(h,w)$ does not occur after $(d,e)$ in $\cg'$. Since $T_{\{h,w\}}=hw$,
one obtains that $L'_{\{h,w\}}=hw$.

Therefore, for all $h\in \{t_1,\ldots, t_q\}$, we have $L'_{\{h,w\}}=wh$, and for all $h\in \overline{B}\setminus \{w, t_1,\ldots, t_q\}$, we have $L'_{\{h,w\}}=hw$.
Thus, $\{t'_1,\ldots, t'_q\}=\{t_1, \ldots, t_q\}$.
Hence, there is a geodesic $\cg'$ that is equivalent to $\cg$ such that if $L'=\cdots t'_1\cdots t'_q$  is the linear order in $\cg'$ immediately before the swap $(d,e)$, then $\{t'_1,\ldots, t'_q\}=\{t_1, \ldots, t_q\}$.

We next show that $(d,e)$ remains the switching pair which appears first in $S(\cg')$ among all switching pairs in $C_{NT}$. Since only the switching pairs in $K_1$ moved before $(d,e)$, this is equivalent to showing $C_{NT}\cap K_1=\emptyset$.
Let $(a,b)\in C_{NT}\cap K_1$. 
Since $(a,b)\in C_{NT}\subseteq C$, we have $T_{\{a,b,z\}}=zba$.
Since $w\notin\{t_1,\ldots, t_{q},z\}$, we have $T_{\{a,b,w,z\}}=wzba$.
Therefore,  $T_{\{a,b,v\}}=wba$.
Since $(a,b)\in C_{NT}\subseteq C$, we have $R_{\{a,b\}}=ab$ and $T_{\{a,b\}}=ba$. So the switching pair $(a,b)$ occurs in $S(\cg)$.
Since $(a,b)\in C_{NT}$, we have $(d,e)\vartriangleleft (a,b)$. 
Since $L$ is the linear order immediately before $(d,e)$, we have $L_{\{a,b\}}=R_{\{a,b\}}=ab$.
Therefore, $N_p(\cd_{\{a,b,z\}})\ne\{bN_{\{a,b,z\}}1\}$ due to Lemma~\ref{conpropl3}.
Therefore, it is not possible for such a switching pair $(a,b)$ to exist and so $C_{NT}\cap K_1=\emptyset$. 

Thus, there is a geodesic $\cg'$ equivalent to $\cg$ such that $(d,e)$ is the first switching pair in $S(\cg')$ among all switching pairs in $C_{NT}$, and the linear order $L'=\cdots t'_1\cdots t'_q$ in $\cg'$ that immediately before the swap $(d,e)$ has that $\{t'_1,\ldots, t'_{q}\}=\{t_1,\ldots, t_{q}\}$. This is Condition $A_2$ for $\cg'$.
Since $\cg'=(G'_1,\ldots,G'_k)$ is equivalent with $\cg=(G_1,\ldots,G_k)$, the set $\cd_{B\setminus\{z\}}\cup\{G'_1,\ldots, G'_k\}$ satisfies the same set of never-conditions as $\cd_{B\setminus\{z\}}\cup\{G_1,\ldots, G_k\}$ due to Lemma~\ref{wex1.3}\ref{wex1.3c}. In particular, it is a peak-pit Condorcet domain, which is Condition $A_1$ for $\cg'$.
\end{proof}

We next improve Lemma~\ref{conex2.1}.

\begin{lemma}\label{conex2.2}
    There exists a geodesic $\cg=(G_1,\ldots, G_k)$ on $B\setminus\{z\}$ connecting $R_{B\setminus\{z\}}$ and $T_{B\setminus\{z\}}$ satisfying Conditions $A_1-A_3$.
\end{lemma}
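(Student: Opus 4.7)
The plan is to start from the geodesic $\cg$ produced by Lemma~\ref{conex2.1} (which already satisfies $A_1$ and $A_2$) and reorganize only its portion after $(d,e)$, using commutations of adjacent disjoint switching pairs, so that the resulting equivalent geodesic $\cg'$ additionally satisfies $A_3$. Writing $\overline{B}=B\setminus\{z\}$, I will classify each switching pair occurring after $(d,e)$ in $\cg$ as \emph{type~(a)} if both alternatives lie in $\{t_1,\ldots,t_q\}$, as \emph{type~(b)} if both lie in $\overline{B}\setminus\{t_1,\ldots,t_q\}$, and as \emph{mixed} otherwise.

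The first key step will be to rule out mixed switching pairs after $(d,e)$. By $A_2$, the linear order $L$ immediately before $(d,e)$ has $\{t_1,\ldots,t_q\}$ as its last $q$ positions, and the swap $(d,e)\in\{t_1,\ldots,t_q\}^2$ preserves this property; the endpoint $T_{\overline{B}}=\cdots w\,t_1\cdots t_q$ also has $\{t_1,\ldots,t_q\}$ as its last $q$ positions. Hence for any $a\in\{t_1,\ldots,t_q\}$ and $b\in\overline{B}\setminus\{t_1,\ldots,t_q\}$, both the linear order immediately after $(d,e)$ and $T_{\overline{B}}$ rank $b$ above $a$, so $(a,b)$ cannot be switched in the sub-geodesic between them. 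The second key step will be to observe that every type~(a) switching pair $(a,b)$ (say with $R_{\{a,b\}}=ab$) automatically lies in $C$: since $z$ is last in $R_B$, we get $R_{\{a,b,z\}}=abz$, and since $z$ lies above $\{t_1,\ldots,t_q\}$ in $T_B$ together with $T_{\{a,b\}}=ba$ (as $(a,b)$ is a switching pair), we get $T_{\{a,b,z\}}=zba$.

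With these two observations in hand, I will execute the rearrangement as follows. Any type~(a) swap is disjoint from any type~(b) swap; in particular $(d,e)$, being type~(a), is disjoint from every type~(b) swap that follows it. So one may repeatedly apply the adjacent-disjoint commutation built into the equivalence relation on geodesics, pushing each type~(b) swap leftward past all intervening type~(a) swaps and then past $(d,e)$ itself. This yields an equivalent geodesic $\cg'$ in which every switching pair after $(d,e)$ is of type~(a) and hence lies in $C$, which is $A_3$. Condition $A_1$ for $\cg'$ follows from Lemma~\ref{wex1.3}\ref{wex1.3c} since $\cg'$ is equivalent to $\cg$. For $A_2$, the type~(b) swaps newly moved before $(d,e)$ do not lie in $C\subseteq\{t_1,\ldots,t_q\}^2$ and hence not in $C_{NT}$, so $(d,e)$ remains the first switching pair from $C_{NT}$; moreover these type~(b) swaps act only on positions occupied by alternatives in $\overline{B}\setminus\{t_1,\ldots,t_q\}$, leaving the last $q$ positions of the linear order immediately before $(d,e)$ in $\cg'$ equal to $\{t_1,\ldots,t_q\}$. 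The main technical obstacle will be the bookkeeping of the iterated commutations: one must verify that at each stage the type~(b) swap being relocated is a genuine adjacent swap of alike linear orders in the evolving sequence. This is underwritten by the fact that every intervening type~(a) swap acts only within the last $q$ positions, so it never disturbs the first $n-1-q$ positions where the type~(b) swap operates, making each commutation legitimate.
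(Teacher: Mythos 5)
Your proposal is correct, and while its overall skeleton matches the paper's (start from the geodesic of Lemma~\ref{conex2.1}, commute the unwanted switching pairs to positions before $(d,e)$, get $A_1$ from Lemma~\ref{wex1.3}\ref{wex1.3c}, and check that $A_2$ survives), the key combinatorial step is justified by a genuinely different and cleaner argument. The paper never classifies the post-$(d,e)$ swaps positionally; instead it proves, by a case analysis on a shared alternative (using $L_{\{r_\ell,s_\ell,s_m\}}=s_mr_\ell s_\ell$ and the shape of $T_B$), that every switching pair in $C$ occurring after $(d,e)$ is disjoint from every switching pair not in $C$ occurring after $(d,e)$, which is exactly what is needed to commute the non-$C$ pairs leftward. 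You instead prove the stronger structural fact that no ``mixed'' pair can occur after $(d,e)$ at all: since the linear order immediately after the swap $(d,e)$ and $T_{B\setminus\{z\}}$ both have $\{t_1,\ldots,t_q\}$ occupying the last $q$ positions (here Condition $A_2$ and $(d,e)\in\{t_1,\ldots,t_q\}^2$ are used), and a switching pair occurs at most once in a geodesic (Lemma~\ref{wex1.1}\ref{wex1.1a}), a pair with exactly one alternative in $\{t_1,\ldots,t_q\}$ is ordered identically at both ends of the tail and hence cannot be switched there; the remaining ``both inside'' pairs are automatically in $C$ (as $z$ is last in $R_B$ and precedes $t_1,\ldots,t_q$ in $T_B$), and the ``both outside'' pairs are trivially disjoint from $(d,e)$ and from all $C$-pairs. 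Your route buys a shorter, more transparent disjointness argument and a one-line positional verification that the order immediately before $(d,e)$ still ends in $\{t_1,\ldots,t_q\}$ (the paper re-derives this by contradiction); the paper's route, while more laborious, works directly with the weaker disjointness statement and does not need the observation that the tail of a geodesic is itself a geodesic determined by its endpoints. Both establish $A_3$ together with the preservation of $A_1$ and $A_2$, so your proof is a valid alternative.
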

\begin{proof}
Let $\cg$ be as in Lemma~\ref{conex2.1} that satisfies Conditions $A_1-A_2$. Let $(d,e)$ be the first switching pair in $S(\cg)$ among all switching pairs in $C_{NT}$.
In particular, the linear order $L=\cdots t'_1\cdots t'_{q}$ in $\cg$ that is immediately before the swap $(d,e)$ gives $\{t'_1,\ldots, t'_{q}\}=\{t_1,\ldots, t_{q}\}$. 
Let $j\in[k-1]$ be such that $(d,e)=(r_{j},s_{j})$. Write the sequence $S(\cg)$ of switching pairs in $\cg$ as
$$S(\cg):(r_1,s_1),\ldots, (r_{j-1},s_{j-1}), (d,e),(r_{j+1},s_{j+1}), \ldots, (r_{k-1}, s_{k-1}).$$
Again we write the switching pairs such that $R_{\{r_i,s_i\}}=r_is_i$ for all $i\in[k-1]$.

We next 
construct a geodesics $\cg'$ that is equivalent with $\cg$. Then construction involves moving switching pairs after $(d,e)$ but not in $C$ to the positions before $(d,e)$.
Let $\ell, m\in\{j,\cdots, k-1\}$ and suppose $(r_\ell,s_\ell)\in C$ and $(r_m,s_m)\notin C$.
We claim that $\{r_\ell,s_\ell\}\cap\{r_m,s_m\}=\emptyset$. Suppose not. Without loss of generality, suppose $r_m\in\{r_\ell,s_\ell\}$.
Since $(r_\ell,s_\ell)\in C\subseteq\{t_1,\ldots,t_q\}^2$, it follows that $r_m\in\{t_1,\ldots,t_q\}$. Suppose $s_m\in\{t_1,\ldots,t_q\}$ for a contradiction.
Since $z$ is ranked last in $R_B$ and $T_B=\cdots zt_1\cdots t_q$ and also $(r_m,s_m)$ is in $S(\cg)$, we have $R_{\{r_m,s_m,z\}}$ and $T_{\{r_m,s_m,z\}}$ differ by three swaps of adjacent alternatives. Hence, $(r_m,s_m)\in C$, a contradiction. So $s_m\notin\{t_1,\ldots,t_q\}$.
Now $L_{\{r_\ell,s_\ell,s_m\}}= s_mr_\ell s_\ell$ by Lemma~\ref{conex2.1}.
Note that $(r_\ell,s_\ell)\in C$, so $T_{\{r_\ell,s_\ell\}}=s_\ell r_\ell$.
Since $(r_m,s_m)$ appear after $(d,e)$ in $\cg$ and $r_m\in\{r_\ell,s_\ell\}$, we have $(r_\ell,s_m)$ or $(s_\ell,s_m)$ is in $S(\cg)$.
If $(r_\ell,s_m)$ is in $S(\cg)$, then $T_{\{r_\ell,s_m\}}=r_\ell s_m$ and so $T_{\{r_\ell,s_\ell,s_m\}}=s_\ell r_\ell s_m$.
If $(s_\ell,s_m)$ is in $S(\cg)$, then $T_{\{s_\ell,s_m\}}=s_\ell s_m$. In both cases, $T_{\{s_\ell,s_m\}}=s_\ell s_m$. Since $T_B=\cdots zt_1\cdots t_q$ and $s_\ell\in\{t_1,\ldots, t_q\}$, we have $s_m\in\{t_1,\ldots, t_q\}$, a contradiction.
Therefore, we proved that $\{r_\ell,s_\ell\}\cap\{r_m,s_m\}=\emptyset$.
Now, we can obtain the geodesic $\cg'$ equivalent to $\cg$ by repeatedly moving the switching pairs after $(d,e)$ in $\cg$, which are not in $C$, to the position immediately before $(d,e)$ (in the same way as the construction of $\cg'$ as in the proof of Lemma~\ref{conex2.1}). 
Write
\begin{align*}
    S(\cg'):\; &(r'_1,s'_1),\ldots, (r'_{j'-1},s'_{j'-1}), (d,e),(r'_{j'+1},s'_{j'+1}), \ldots, (r'_{k-1}, s'_{k-1}).
\end{align*}
Then $(r'_{j'+1},s'_{j'+1}), \ldots, (r'_{k-1}, s'_{k-1})\in C$.

Since we only move switching pairs that are not in $C$ to the position before $(d,e)$, the switching pair $(d,e)$ remains the first switching pair in $S(\cg')$ among all switching pairs in $C_{NT}$. 
Let $L'=\cdots t''_1\cdots t''_{q}$ be the linear order that is immediately before the swap $(d,e)$ in $\cg'$. If $\{t''_1,\ldots,t''_q\}\ne\{t_1,\ldots,t_q\}$, then there are $t\in\{t_1,\ldots,t_q\}$ and $h\in \{t''_1,\ldots,t''_q\}$ such that $t\notin \{t''_1,\ldots,t''_q\}$  and $h\notin \{t_1,\ldots,t_q\}$. Then $L'_{\{t,h\}}=th$. 
Since $T_B=\cdots zt_1\cdots t_q$, we have $T_{\{t,h\}}=ht$ and so $(t,h)$ is a switching pair in $S(\cg)$. 
Since $\cg'$ is obtained from $\cg$ only by moving the switching pairs that appear after $(d,e)$ in $\cg$, the switching pair $(t,h)$ appears after $(d,e)$ in $\cg$. Hence, $L_{\{t,h\}}=th$, which contradicts $\{t'_1,\ldots,t'_q\}=\{t_1,\ldots,t_q\}$.
Therefore, $\{t''_1,\ldots,t''_q\}=\{t_1,\ldots,t_q\}$.
So we are in the same situation as in Lemma~\ref{conex2.1}, but in addition all switching pairs in $\cg'$ which occur after $(d,e)$ are in $C$.

Moreover, since $\cg'=(G'_1,\ldots,G'_k)$ is equivalent with $\cg=(G_1,\ldots,G_k)$, the set $\cd_{B\setminus\{z\}}\cup\{G'_1,\ldots, G'_k\}$ satisfies the same set of never-conditions as $\cd_{B\setminus\{z\}}\cup\{G_1,\ldots, G_k\}$ due to Lemma~\ref{wex1.3}\ref{wex1.3c}. In particular, it is a peak-pit Condorcet domain.
Therefore, $\cg'$ satisfies the requirements of the lemma.
\end{proof}

We improve once more the geodesic.

\begin{lemma}\label{conex2.3}
There exists a geodesic $\cg=(G_1,\ldots, G_k)$ on $B\setminus\{z\}$ connecting $R_{B\setminus\{z\}}$ and $T_{B\setminus\{z\}}$ satisfying Conditions $A_1-A_4$.
\end{lemma}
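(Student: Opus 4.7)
The plan is to start from a geodesic $\cg$ satisfying Conditions $A_1$--$A_3$, which exists by Lemma~\ref{conex2.2}, and to iteratively ``repair'' it until Condition $A_4$ also holds. The natural measure of progress is the number of switching pairs in $S(\cg)$ that lie after $(d,e)$: I will show that if $A_4$ fails, then an equivalent geodesic $\cg'$ still satisfying $A_1$--$A_3$ can be produced with strictly fewer switching pairs after $(d,e)$. Since this number is a non-negative integer, the procedure must terminate at a geodesic satisfying all four conditions.

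Suppose $A_4$ fails for $\cg$. Let $(a,b)$ be the earliest switching pair in $S(\cg)$ with $(d,e)\vartriangleleft (a,b)$ such that $(a,b)\notin C_{NT}$ and $(a,b)$ is disjoint from every switching pair $(x,y)$ with $(d,e)\trianglelefteq (x,y)\vartriangleleft (a,b)$. By Condition $A_3$ we have $(a,b)\in C$, so $a,b\in\{t_1,\ldots,t_q\}$. Because $(a,b)$ is disjoint from each switching pair between $(d,e)$ and itself (inclusive), we can successively swap $(a,b)$ leftwards past adjacent disjoint pairs, using the equivalence-class structure from the preceding section, to obtain a geodesic $\cg'$ whose sequence of switching pairs coincides with $S(\cg)$ except that $(a,b)$ has been moved to the slot immediately before $(d,e)$.

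I then check that $\cg'$ preserves $A_1$--$A_3$. For $A_1$, since $\cg$ and $\cg'$ are equivalent, Lemma~\ref{wex1.3}\ref{wex1.3c} gives that $\cd_{B\setminus\{z\}}\cup K(\cg')$ is a peak-pit Condorcet domain. For $A_2$, let $L'$ be the linear order in $\cg'$ immediately before the swap $(d,e)$; then $L'$ is obtained from the corresponding $L$ in $\cg$ by exchanging $a$ and $b$, and since $\{a,b\}\subseteq\{t_1,\ldots,t_q\}$ the last $q$ alternatives of $L'$ are still a permutation of $\{t_1,\ldots,t_q\}$. Moreover, $(d,e)$ remains the first switching pair in $S(\cg')$ among all pairs in $C_{NT}$, because the only pair moved before it, namely $(a,b)$, lies outside $C_{NT}$. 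For $A_3$, the switching pairs after $(d,e)$ in $\cg'$ form a proper subset of those after $(d,e)$ in $\cg$, and hence are still contained in $C$.

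The main obstacle I anticipate is verifying $A_2$ after the rearrangement, since both the identity of the linear order immediately before $(d,e)$ and the ``$(d,e)$ is first in $C_{NT}$'' property are delicate. The key leverage is the membership $(a,b)\in C\setminus C_{NT}$: the first clause forces $\{a,b\}\subseteq\{t_1,\ldots,t_q\}$ so that the tail-set of $L'$ is undisturbed, and the second clause ensures that moving $(a,b)$ before $(d,e)$ cannot displace $(d,e)$ from being the first $C_{NT}$-pair. Once these checks are in place, the iteration terminates, yielding the geodesic required by the lemma.
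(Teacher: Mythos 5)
Your proposal is correct and follows essentially the same route as the paper: both repair the geodesic from Lemma~\ref{conex2.2} by moving the $A_4$-violating switching pairs (which lie in $C\setminus C_{NT}$ and are disjoint from everything between $(d,e)$ and themselves) to the position immediately before $(d,e)$ via adjacent-disjoint swaps, then invoke Lemma~\ref{wex1.3}\ref{wex1.3c} for $A_1$ and observe that $(d,e)$ stays the first $C_{NT}$-pair since only non-$C_{NT}$ pairs are moved. The only differences are cosmetic: the paper moves all such pairs in one batch via the inductively defined set $M$ and verifies the tail condition of $A_2$ by contradiction, whereas you iterate one pair at a time with a decreasing counter and verify $A_2$ directly by noting that the new order before $(d,e)$ is the old $L$ with the two tail alternatives $a,b\in\{t_1,\ldots,t_q\}$ transposed.
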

\begin{proof}
Let $\cg$ be as in Lemma~\ref{conex2.2} that satisfies Conditions $A_1-A_3$. Let $(d,e)$ be the first switching pair in $S(\cg)$ among all switching pairs in $C_{NT}$.
In particular, the linear order $L=\cdots t'_1\cdots t'_{q}$ in $\cg$ that is immediately before the swap $(d,e)$ gives $\{t'_1,\ldots, t'_{q}\}=\{t_1,\ldots, t_{q}\}$.
Write the sequence $S(\cg)$ of switching pairs in $\cg$ as
$$S(\cg):(r_1,s_1),\ldots, (r_{j-1},s_{j-1}), (d,e),(r_{j+1},s_{j+1}), \ldots, (r_{k-1}, s_{k-1}).$$
Again we write the switching pairs such that $R_{\{r_i,s_i\}}=r_is_i$ for all $i\in[k-1]$.
We will construct $\cg'$ such that for then in addition all switching pairs in $\cg'$, which are after $(d,e)$, are in $C_{NT}$ or not disjoint with at least one switching pair between $(d,e)$ and the given switching pair, including $(d,e)$. The construction is by induction.
To construct $\cg'$, we construct a set $M\subseteq \{1,\ldots, k-1-j\}$ such that for all $m\in M$, we shall move the switching pair $(r_{j+m},s_{j+m})$ before $(d,e)$. Write $(r_{j},s_{j})=(d,e)$. Define $M_0=\emptyset$. If $m\in\{1,\ldots, k-1-j\}$ and $M_{m-1}$ is defined, then $M_m=M_{m-1}\cup\{m\}$ if
\begin{align}\label{prop24e1}
   &(r_{j+m},s_{j+m})\notin C_{NT}\;\text{and}\;
   (r_{j+m},s_{j+m})\;\text{is disjoint from}\; (r_{j+\ell},s_{j+\ell})\;\nonumber\\&
   \text{for all}\; \ell\in\{0,\ldots,m-1\}\setminus M_{m-1}.
\end{align}
If (\ref{prop24e1}) is not satisfied, then define $M_m=M_{m-1}$. Inductively set $M=M_{k-1-j}$. Now
obtain the geodesic $\cg'$ equivalent to $\cg$ by repeatedly moving each $(r_{j+m}$,  $s_{j+m})$ with $m\in M$, starting with the lowest element of $M$, to the position immediately before $(d,e)$ (in the same way as the construction of $\cg'$ as in the proof of Lemma~\ref{conex2.1}). 
Write
\begin{align*}
    S(\cg'):\; &(r'_1,s'_1),\ldots, (r'_{j'-1},s'_{j'-1}),(d,e), (r'_{j'+1},s'_{j'+1}), \ldots, (r'_{k-1}, s'_{k-1})
\end{align*}
with $(r'_{j'+1},s'_{j'+1}), \ldots, (r'_{k-1}, s'_{k-1}) \in C$.
Then for all $m\in\{1,\ldots,k-1-j\}$ one has $(r'_{j'+m},s'_{j'+m})\in C_{NT}$ or $(r'_{j'+m},s'_{j'+m})$ is not disjoint with at least one element in $\{(d,e)$, $(r'_{j'+1},s'_{j'+1})$, $\ldots, (r'_{j'+m-1}$, $s'_{j'+m-1})\}$.

Since the switching pairs in $C_{NT}$ were not moved, the switching pair $(d,e)$ remains the first switching pair in $S(\cg')$ among all switching pairs in $C_{NT}$. 
Let $L'=\cdots t''_1\cdots t''_{q}$ be the linear order that is immediately before the swap $(d,e)$ in $\cg'$. If $\{t''_1,\ldots,t''_q\}\ne\{t_1,\ldots,t_q\}$, then there are $t\in\{t_1,\ldots,t_q\}$ and $h\in \{t'_1,\ldots,t'_q\}$ such that $t\notin \{t'_1,\ldots,t'_q\}$  and $h\notin \{t_1,\ldots,t_q\}$. Then $L'_{\{t,h\}}=th$.
Since $T_B=\cdots zt_1\cdots t_q$, we have $T_{\{t,h\}}=ht$ and so $(t,h)$ is a switching pair in $S(\cg')$. 
Since $\cg'$ is obtained from $\cg$ only by moving the switching pairs that appear after $(d,e)$ in $\cg$, the switching pair $(t,h)$ appears after $(d,e)$ in $\cg'$. Hence, $L_{\{t,h\}}=th$, which contradicts $\{t'_1,\ldots,t'_q\}=\{t_1,\ldots,t_q\}$ in Lemma~\ref{conex2.2}.
Therefore, $\{t''_1,\ldots,t''_q\}=\{t_1,\ldots,t_q\}$.

Moreover, since $\cg'=(G'_1,\ldots,G'_k)$ is equivalent with $\cg=(G_1,\ldots,G_k)$, the set $\cd_{B\setminus\{z\}}\cup\{G'_1,\ldots, G'_k\}$ satisfies the same set of never-conditions as $\cd_{B\setminus\{z\}}\cup\{G_1,\ldots, G_k\}$ due to Lemma~\ref{wex1.3}\ref{wex1.3c}. In particular, it is a peak-pit Condorcet domain.
Therefore, $\cg'$ satisfies the requirements of the lemma.
\end{proof}

We now continue the proof for Proposition~\ref{conex2}.
Let $\cg$ be as in Lemma~\ref{conex2.3}.
Let $(d,e)$ be the first switching pair in $S(\cg)$ among all switching pairs in $C_{NT}$.
Then the linear order $L=\cdots t'_1\cdots t'_{q}$ in $\cg=(G_1,\ldots, G_k)$ that is immediately before the swap $(d,e)$ gives $\{t'_1,\ldots, t'_{q}\}=\{t_1,\ldots, t_{q}\}$ 
by Condition $A_2$. 
Write
$$S(\cg): (r_1,s_1),\ldots, (r_{j-1},s_{j-1}),(d,e), (r_{j+1},s_{j+1}),\ldots, (r_{k-1}, s_{k-1}).$$
Define the path of alike linear orders $\cg^*$ by adding for all $p\in[q]$, the switching pair $(t_p,z)$ into the positions immediately before $(d,e)$ in $S(\cg)$. Then $\cg^*$ is characterised by 
\begin{align*}
    S(\cg^*):\; &(r_1,s_1),\ldots, (r_{j-1},s_{j-1}),(t'_q,z), \ldots, (t'_2,z), (t'_1,z),\\
    &(d,e),(r_{j+1},s_{j+1}), \ldots, (r_{k-1}, s_{k-1}).
\end{align*}
Recall that $(r_{j+1},s_{j+1}), \ldots, (r_{k-1}, s_{k-1}) \in C$ by Condition $A_3$.
Then for all $m\in\{1, \ldots, k-1-j\}$ one has $(r_{j+m},s_{j+m})\in C_{NT}$ or $(r_{j+m},s_{j+m})$ is not disjoint with at least one element in $\{(d,e), (r_{j+1},s_{j+1}), \ldots, (r_{j+m-1}, s_{j+m-1})\}$ by Condition $A_4$.
Also recall that we write the switching pairs such that $R_{\{r_i,s_i\}}=r_is_i$ for all $i\in[k-1]$.

Since $\{t'_1,\ldots, t'_{q}\}=\{t_1,\ldots, t_{q}\}$ and $T_B=\cdots zt_1\cdots t_q$, we have that $\cg^*$ is a path of alike linear orders by construction. 
Since all switching pairs in $\cg^*$ occur exactly once, $\cg^*$ is a geodesic connecting $R_B$ and $T_B$ due to Lemma~\ref{wex1.1}\ref{wex1.1b}.
It remains to show that $\cd_B\cup\{G^*_1,\ldots,G^*_{k'}\}$ is a peak-pit Condorcet domain, where $\cg^*=\{G^*_1,\ldots,G^*_{k'}\}$.

Let $a,b,c\in B$ be distinct alternatives. If $z\notin\{a,b,c\}$, then we have $(\cd_B\cup\{G^*_1,\ldots,G^*_{k'}\})_{\{a,b,c\}}$ is a peak-pit Condorcet domain by Condition $A_1$. 
Suppose $z\in\{a,b,c\}$, say $z=c$. If $(a,b)\notin C$ and $(b,a)\notin C$, then $R_{\{a,b,z\}}$ and $T_{\{a,b,z\}}$ differ by at most two swaps of adjacent alternatives. Then by Lemma~\ref{wex1.2}\ref{wex1.2b}, $(\cd_{B}\cup\{G^*_1,\ldots,G^*_{k'}\})_{\{a,b,z\}}$ is a peak-pit Condorcet domain.
Suppose, without loss of generality, that $(a,b)\in C$. 
If $(a,b)\vartriangleleft(d,e)$ in $S(\cg)$, then  $(a,b)\vartriangleleft(z,a)$ in $S(\cg^*)$. The dichotomy of Lemma~\ref{2geodesics} gives that the situation of 
Lemma~\ref{2geodesics}\ref{2geodesics3c} occurs. Hence, $\{G^*_1,\ldots,G^*_{k'}\}_{\{a,b,z\}}$ satisfies $bN_{\{a,b,z\}}3$. 
Since $(d,e)$ is the first element of $C_{NT}$ by condition $A_2$, it follows that $(a,b)\notin C_{NT}$. 
In particular, $N_p(\cd_{\{a,b,z\}})\ne\{bN_{\{a,b,z\}}3\}$.
Since $(a,b)\in C$, we have  $R_{\{a,b,z\}}=abz$ and $T_{\{a,b,z\}}=zba$.
Hence, $\emptyset\ne N_p(\cd_{\{a,b,z\}})\subseteq \{bN_{\{a,b,z\}}1$, $bN_{\{a,b,z\}}3\}$ due to Lemma~\ref{wex0}\ref{wex0a}.
Since $(a,b)\notin C_{NT}$, one has $N_p(\cd_{\{a,b,z\}})\ne \{bN_{\{a,b,z\}}1$ and so $bN_{\{a,b,z\}}\in N_p(\cd_{\{a,b,z\}})$. In particular, $\cd_{\{a,b,z\}}$ satisfies $bN_{\{a,b,z\}}3$.
Hence, $(\cd_B\cup\{G^*_1,\ldots,G^*_{k'}\})_{\{a,b,z\}}=(\cd_{\{a,b,z\}}\cup\{G^*_1,\ldots,G^*_{k'}\})_{\{a,b,z\}}$ satisfies $bN_{\{a,b,z\}}3$.

If $(d,e)\trianglelefteq(a,b)$ in $S(\cg)$, then $(z,a)\vartriangleleft(a,b)$ in $S(\cg^*)$. So now the dichotomy of Lemma~\ref{2geodesics} gives that the situation of 
Lemma~\ref{2geodesics}\ref{2geodesics3d} occurs. Hence, $\{G^*_1,\ldots,G^*_{k'}\}_{\{a,b,z\}}$ satisfies $bN_{\{a,b,z\}}1$.
We will next show that $\cd_{\{a,b,z\}}$ satisfies $bN_{\{a,b,z\}}1$ and therefore $(\cd_B\cup\{G^*_1,\ldots,G^*_{k'}\})_{\{a,b,z\}}$ satisfies $bN_{\{a,b,z\}}1$ and it is a peak-pit Condorcet domain, which will finish the induction step of the proof of Proposition~\ref{conex2}.
So the proof of Proposition~\ref{conex2} is complete once we have shown
that for every switching pair $(a,b)$ after $(d,e)$, the domain $\cd_{\{a,b,z\}}$ satisfies $bN_{\{a,b,z\}}1$.

Let $(a,b)$ be a switching pair after $(d,e)$ in $S(\cg)$.
For a contradiction suppose that $\cd_{\{a,b,z\}}$ does {\bf not} satisfy $bN_{\{a,b,z\}}1$.
Without loss of generality, we may assume that $(a,b)$ is the first switching pair after $(d,e)$ such that 
$\cd_{\{a,b,z\}}$ does not satisfy $bN_{\{a,b,z\}}1$.
If $(a,b)\in C_{NT}$, then $N_p(\cd_{\{a,b,z\}})=\{bN_{\{a,b,z\}}1\}$ by definition of $C_{NT}$. So $(a,b)\notin C_{NT}$.
Hence, there exists a switching pair \((f, g)\) such that \((d, e)\trianglelefteq(f,g)\vartriangleleft(a,b)\), and \((f, g)\in C\) by Condition $A_3$, and \(\{f, g\}\cap\{a,b\}\ne \emptyset\)  by Condition $A_4$. 
Because $(a,b)$ is the first switching pair after $(d,e)$ such that $bN_{\{a,b,z\}}1\notin N_p(\cd_{\{a,b,z\}})$, 
we have $gN_{\{f,g,z\}}1\in N_p(\cd_{\{f,g,z\}})$.

Now $(f,g)\in C$ and so $fgz=R_{\{f,g,z\}}\in \cd_{\{f,g,z\}}$ and $zgf=T_{\{f,g,z\}}\in \cd_{\{f,g,z\}}$. 
Hence, $\emptyset\ne N_p(\cd_{\{f,g,z\}})\subseteq \{gN_{\{f,g,z\}}1$, $gN_{\{f,g,z\}}3\}$ due to Lemma~\ref{wex0}\ref{wex0a}. 
Since $gN_{\{f,g,z\}}1\in N_p(\cd_{\{f,g,z\}})$, we have 
\begin{equation}\label{fgz}
    \cd_{\{f,g,z\}}\subseteq\{fgz,fzg,zfg,zgf\}.
\end{equation}

On the other hand, since $bN_{\{a,b,z\}}1\notin N_p(\cd_{\{a,b,z\}})$, there is a linear order $W\in\cd$ such that 
\begin{equation}\label{abz}
    W_{\{a,b,z\}}=baz\;\text{or}\; W_{\{a,b,z\}}=bza.
\end{equation}
Note that in either case, $W$ ranks $b$ before both $a$ and $z$.

Recall that $(f,g)\vartriangleleft(a,b)$ and so $\{f,g\}\ne\{a,b\}$. 
In particular, $|\{f,g\}\cap\{a,b\}|=1$. Write $\{f,g\}\cup\{a,b\}=\{f,g,s\}$. 
Now since $z$ is ranked last in $R_B$, there are only three possibilities for $R_{\{f,g,s,z\}}$: $sfgz, fsgz$ and $fgsz$. 
We will show that all cases lead to a contradiction so that $bN_{\{a,b,z\}}1\in N_p(\cd_{\{a,b,z\}})$.

{\bf Case 2.1.} Suppose $R_{\{f,g,s,z\}}=sfgz$. 
Then $R_{\{f,g,s\}}=sfg$. 
There is at least the swap $(f,g)$ in $S(\cg_{\{f,g,s\}})$. 
Note that the swap $(s,f)$ may or may not appear in $S(\cg_{\{f,g,s\}})$.
Since $s$ and $f$ are adjacent and $f$ and $g$ are adjacent in $R_{\{f,g,s\}}$, either $(s,f)$ appears first, if at all, in $S(\cg_{\{f,g,s\}})$ or $(f,g)$ appears first.
If $(s,f)$ appears first, then $g$ needs to swap with $s$ before it can swap with $f$.
So we have $(s,f)\vartriangleleft(s,g)\vartriangleleft(f,g)$ in $S(\cg_{\{f,g,s\}})$. Since $(a,b)=(s,f)$ or $(a,b)=(s,g)$ as swaps, this
contradicts the assumption that $(f,g)$ appears before $(a,b)$.
Hence, $(f,g)$ appears first in $S(\cg_{\{f,g,s\}})$. 
There are two cases for $(a,b)$.

{\bf Case 2.1.1.} Suppose $(a,b)=(s,g)$. 
Since $(f,g)\in C$, we have  $T_{\{f,g,z\}}=zgf$. Since $(s,g)$ is in $\cg$ and $R_{\{g,s\}}=sg$, we have $T_{\{g,s\}}=gs$.
Therefore, $T_{\{f,g,s,z\}}$ is $zgsf$ or $zgfs$. 
The difference is whether or not $(s,f)$ is a switching pair in $\cg$, as shown in Figure~\ref{ecp2}. 
Note that $(f,g)\vartriangleleft(s,g)$ and so $(f,g)$ appears first in $S(\cg_{\{f,g,s\}})$ by Lemma~\ref{2geodesics3}\ref{2geodesics3f}.
Since $gN_{\{f,g,z\}}1\in N_p(\cd_{\{f,g,z\}})$, we have $gN_{\{g,s,z\}}1\in N_p(\cd_{\{g,s,z\}})$ by Lemma~\ref{c211}.

\begin{figure}[H]
 \centering
\begin{tikzpicture}[scale=0.60]

\draw[fill=black] (3,0) circle (3pt);
\draw[fill=black] (3,2) circle (3pt);
\draw[fill=black] (3,4) circle (3pt);
\draw[fill=black] (3,6) circle (3pt);
\draw[fill=black] (19,0) circle (3pt);
\draw[fill=black] (19,2) circle (3pt);
\draw[fill=black] (19,4) circle (3pt);
\draw[fill=black] (19,6) circle (3pt);
\draw[fill=black] (22,0) circle (3pt);
\draw[fill=black] (22,2) circle (3pt);
 
\node at (2.5,0) {$z$};
\node at (2.5,2) {$g$};
\node at (2.5,4) {$f$};
\node at (2.5,6) {$s$};
\node at (19.5,0) {$f$};
\node at (19.5,2) {$s$};
\node at (19.5,4) {$g$};
\node at (19.5,6) {$z$};
\node at (22.5,0) {$s$};
\node at (22.5,2) {$f$};

\node at (6.5,1) {$(g,z)$};
\node at (9.5,3) {$(f,z)$};
\node at (12.5,5) {$(s,z)$};
\node at (15.5,1) {$(f,g)$};
\node at (18.5,3) {$(s,g)$};
\node at (21.5,1) {$(s,f)$};

\draw[thick,dashed] (3,0) -- (4,0)--(6,2)--(7,2) --(9,4)--(10,4)--(12,6)--(19,6);
\draw[thick] (3,2)--(4,2) --(6,0)--(13,0)--(15,2)--(16,2)--(18,4)--(19,4);
\draw[thick] (3,4)--(7,4)--(9,2)--(13,2)--(15,0)--(19,0);
\draw[thick,dotted] (19,0)--(21,2)--(22,2);
\draw[thick,dotted] (19,2)--(21,0)--(22,0);
\draw[thick] (3,6)--(10,6)--(12,4)--(16,4)-- (18,2) -- (19,2);

\end{tikzpicture}
    \caption{Case 2.1.} \label{ecp2}
\end{figure}
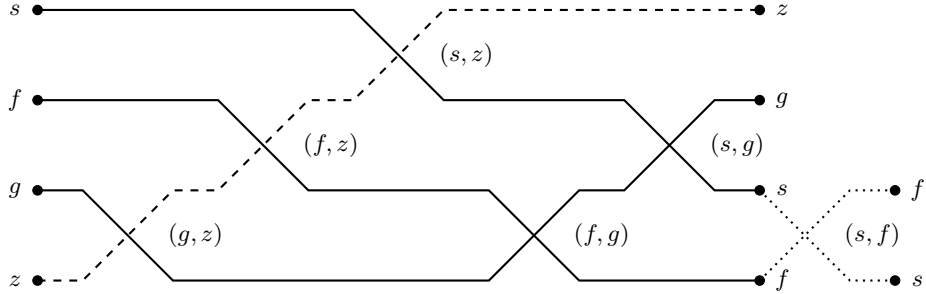

{\bf Case 2.1.2.} Suppose $(a,b)=(s,f)$.
Since $(s,f)$ appears after $(f,g)$, we have $(f,g)\vartriangleleft(s,g)\vartriangleleft(s,f)$ in $S(\cg_{\{f,g,s\}})$ due to Lemma~\ref{2geodesics3}\ref{2geodesics3d}.
Since $R_{\{f,g,s\}}=sfg$, we have $T_{\{f,g,s\}}=gfs$. Since $(f,g)\in C$, we have $T_{\{f,g,z\}}=zgf$.
Therefore, $T_{\{f,g,s,z\}}=zgfs$. 

Now (\ref{abz}) gives that $W_{\{f,s,z\}}$ is either $fsz$ or $fzs$, where we recall that $(a,b)=(s,f)$ and $W\in \cd$.
In particular, $W_{\{f,s\}}=fs$ and $W_{\{f,z\}}=fz$.
Since $\cd_{\{f,g,z\}}\subseteq\{fgz,fzg,zfg,zgf\}$ by (\ref{fgz}), the only linear orders that can possibly be in $\cd_{\{f,g,z\}}$ and ranks $f$ before $z$ are $fgz$ and $fzg$. So $W_{\{f,g,z\}}=fgz$ or $W_{\{f,g,z\}}=fzg$. Nevertheless, $W$ always ranks $f$ before $g$.
Recall that $W$ always ranks $f$ before $s$.
We have $W_{\{f,g,s\}}$ is either $fgs$ or $fsg$. 

Recall $R_{\{f,g,s\}}=sfg$, $T_{\{f,g,s\}}=gfs$ and $(f,g)$ appears first in $S(\cg_{\{f,g,s\}})$.
So we have a never-top geodesic $\cg_{\{f,g,s\}}=(sfg,sgf,gsf,gfs)$ by Lemma~\ref{2geodesics}\ref{2geodesicsb}.
However, $W_{\{f,g,s\}}\in\{fgs,fsg\}$ and both $\{fgs\}\cup \{sfg$, $sgf$, $gsf$, $gfs\}$ and $\{fsg\}\cup \{sfg$, $sgf$, $gsf$, $gfs\}$ do not satisfy any never-condition.
Since $W_{\{f,g,s\}}\in\cd_{\{f,g,s\}}$, we have a contradiction that $\{W_{\{f,g,s\}}\}\cup \{ sfg$, $sgf$, $gsf$, $gfs\}\subset(\cd_{B\setminus\{z\}}\cup\{G_1,\ldots, G_k\})_{\{f,g,s\}}$ is a peak-pit Condorcet domain.



{\bf Case 2.2.} Suppose $R_{\{f,g,s,z\}}=fsgz$. Then $R_{\{f,g,s\}}=fsg$. 
There is at least the swaps $(f,g)$ and $(a,b)$ in $S(\cg_{\{f,g,s\}})$ with $(f,g)\vartriangleleft(a,b)$.
Since $s$ is ranked between $f$ and $g$ in $R$,
there is another swap before the swap $(f,g)$. So the geodesic $\cg_{\{f,g,s\}}$ contains at least three swaps.
By Lemma~\ref{2geodesics3}, one has either $(f,s)\vartriangleleft(f,g)\vartriangleleft(s,g)$ or $(s,g)\vartriangleleft(f,g)\vartriangleleft(f,s)$
in $S(\cg_{\{f,g,s\}})$.
If $(f,s)\vartriangleleft(f,g)\vartriangleleft(s,g)$ in $S(\cg_{\{f,g,s\}})$, then $(a,b)=(s,g)$ and we have $gN_{\{g,s,z\}}1\in N_p(\cd_{\{g,s,z\}})$ by Lemma~\ref{c22}\ref{c221}.
If $(s,g)\vartriangleleft(f,g)\vartriangleleft(f,s)$ in
$S(\cg_{\{f,g,s\}})$, then $(a,b)=(f,s)$ and we have $sN_{\{f,s,z\}}1\in N_p(\cd_{\{f,s,z\}})$ by Lemma~\ref{c22}\ref{c222}.


{\bf Case 2.3.} Suppose $R_{\{d,e,s,z\}}=fgsz$. 
Then $R_{\{f,g,s\}}=fgs$. There is at least the swap $(f,g)$ in $S(\cg_{\{f,g,s\}})$.
Note that the swap $(g,s)$ may or may not appear in $S(\cg_{\{f,g,s\}})$.
Since $s$ and $g$ are adjacent and $f$ and $g$ are adjacent in $R_{\{f,g,s\}}$, either $(g,s)$ appears first, if at all, in $S(\cg_{\{f,g,s\}})$ or $(f,g)$ appears first.
If $(g,s)$ appears first, then $f$ needs to swap with $s$ before it can swap with $g$. So we have $(g,s)\vartriangleleft(f,s)\vartriangleleft(f,g)$ in $S(\cg_{\{f,g,s\}})$. Since $(a,b)=(f,s)$ or $(a,b)=(g,s)$ as swaps, this contradicts the assumption that $(f,g)$ appears before $(a,b)$.
Hence, $(f,g)$ appears first in $S(\cg_{\{f,g,s\}})$, as shown in Figure~\ref{ecp4}.
There are two cases for $(a,b)$.

\begin{figure}[H]
 \centering
\begin{tikzpicture}[scale=0.60]

\draw[fill=black] (3,0) circle (3pt);
\draw[fill=black] (3,2) circle (3pt);
\draw[fill=black] (3,4) circle (3pt);
\draw[fill=black] (3,6) circle (3pt);
\draw[fill=black] (19,0) circle (3pt);
\draw[fill=black] (19,2) circle (3pt);
\draw[fill=black] (19,4) circle (3pt);
\draw[fill=black] (19,6) circle (3pt);
\draw[fill=black] (22,2) circle (3pt);
\draw[fill=black] (22,4) circle (3pt);
 
\node at (2.5,0) {$z$};
\node at (2.5,2) {$s$};
\node at (2.5,4) {$g$};
\node at (2.5,6) {$f$};
\node at (19.5,0) {$f$};
\node at (19.5,2) {$s$};
\node at (19.5,4) {$g$};
\node at (19.5,6) {$z$};
\node at (22.5,2) {$g$};
\node at (22.5,4) {$s$};

\node at (6.5,1) {$(s,z)$};
\node at (9.5,3) {$(g,z)$};
\node at (12.5,5) {$(f,z)$};
\node at (18.5,1) {$(f,s)$};
\node at (15.5,3) {$(f,g)$};
\node at (21.5,3) {$(g,s)$};

\draw[thick,dashed] (3,0) -- (4,0)--(6,2)--(7,2) --(9,4)--(10,4)--(12,6)--(19,6);
\draw[thick] (3,2)--(4,2) --(6,0)--(16,0)--(18,2)--(19,2);
\draw[thick] (3,4)--(7,4)--(9,2)--(13,2)--(15,4)--(19,4);
\draw[thick,dotted] (19,4)--(21,2)--(22,2);
\draw[thick,dotted] (19,2)--(21,4)--(22,4);
\draw[thick] (3,6)--(10,6)--(12,4)--(13,4)--(15,2)--(16,2)-- (18,0) -- (19,0);

\end{tikzpicture}
    \caption{Case 2.3.} \label{ecp4}
\end{figure}
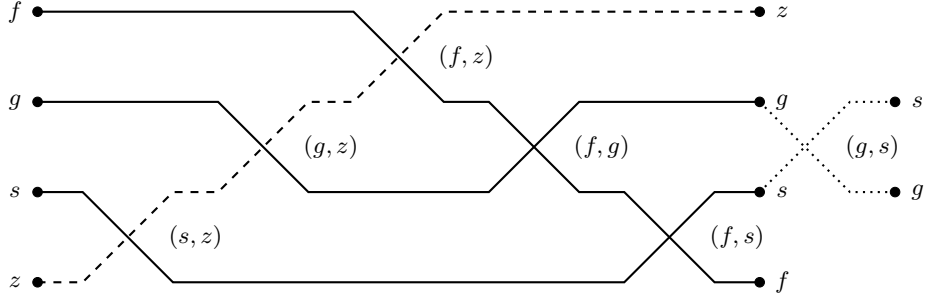 

{\bf Case 2.3.1.} Suppose $(a,b)=(g,s)$.
Since $(f,s)$ appears after $(f,g)$, we have $(f,g)\vartriangleleft(f,s)\vartriangleleft(g,s)$ in $S(\cg_{\{f,g,s\}})$ due to Lemma~\ref{2geodesics3}\ref{2geodesics3d}.
Moreover, $R_{\{f,g,s\}}=fgs$ and so $T_{\{f,g,s\}}=sgf$. Because $(f,g)\in C$, we have $T_{\{f,g,z\}}=zgf$.
Therefore, $T_{\{f,g,s,z\}}=zsgf$. 

Now (\ref{abz}) gives that $W_{\{g,s,z\}}$ is either $sgz$ or $szg$, where we recall that $(a,b)=(g,s)$ and $W\in \cd$.
In particular, $W_{\{g,s\}}=sg$ and $W_{\{s,z\}}=sz$.
On the other hand, recall that $(g,s)=(a,b)$ is the first switching pair after $(d,e)$ such that $\cd_{\{a,b,z\}}$ does not satisfy $bN_{\{a,b,z\}}1$.
Since $(f,s)$ appears between $(f,g)$ and $(g,s)$, it appears between $(d,e)$ and $(a,b)$. So $sN_{\{f,s,z\}}1\in N_p(\cd_{\{f,s,z\}})$.

Recall $(f,s)\in C$ by $A_3$ and so $fsz=R_{\{f,s,z\}}\in \cd_{\{f,s,z\}}$ and $zsf=T_{\{f,s,z\}}\in \cd_{\{f,s,z\}}$. 
Since $sN_{\{f,s,z\}}1\in N_p(\cd_{\{f,s,z\}})$, 
we have $\cd_{\{f,s,z\}}\subseteq\{fsz$, $fzs$, $zfs$, $zsf\}$. 
The only linear order that can possibly be in $\cd_{\{f,s,z\}}$ and ranks $s$ before $z$ is $fsz$ and so $W_{\{f,s,z\}}=fsz$.
Recall that $W_{\{g,s,z\}}$ is either $sgz$ or $szg$.
We have $W_{\{f,g,s,z\}}$ is either $fsgz$ or $fszg$.
In both cases, $W_{\{f,g,s\}}=fsg$. 

Recall $R_{\{f,g,s\}}=fgs$, $T_{\{f,g,s\}}=sgf$ and $(f,g)$ appears first in $S(\cg_{\{f,g,s\}})$.
So we have a never-top geodesic $\cg_{\{f,g,s\}}=(fgs,gfs,gsf,sgf)$ by Lemma~\ref{2geodesics}\ref{2geodesicsb}.
Since $fsg=W_{\{f,g,s\}}\in\cd_{\{f,g,s\}}$, we have a contradiction that $\{fsg\}\cup \{ fgs$, $gfs$, $gsf$, $sgf\}\subset(\cd_{B\setminus\{z\}}\cup\{G_1,\ldots, G_k\})_{\{f,g,s\}}$ is a peak-pit Condorcet domain.

{\bf Case 2.3.2.} Suppose $(a,b)=(f,s)$. 
Since $(f,g)\in C$, we have  $T_{\{f,g,z\}}=zgf$. Since $(f,s)$ is in $\cg$ and $R_{\{f,s\}}=fs$, we have $T_{\{f,s\}}=sf$.
Moreover, $s\in\{a,b\}\in\{t_1,\ldots t_q\}^2$ and $T_B=\cdots zt_1\cdots t_q$, so that $T_{\{s,z\}}=zs$.
Therefore, $T_{\{f,g,s,z\}}$ is $zgsf$ or $zsgf$. 
The difference is whether or not $(g,s)$ is a switching pair in $\cg$, as shown in Figure~\ref{ecp4}.
Note that $(f,g)\vartriangleleft(s,g)$ and so $(f,g)$ appears first in $S(\cg_{\{f,g,s\}})$ by Lemma~\ref{2geodesics3}\ref{2geodesics3a}.


Write $R_B=\cdots fg_1\cdots g_m s\cdots$, where $g_1,\ldots, g_m$ are the alternatives that are ranked between $f$ and $s$ in $R_B$, for some $m$.
Note that $g$ is ranked between $f$ and $s$ and so $m\geq1$.
Denote $$F=\{f,g_1,\ldots,g_m\}.$$ 
It can be proved that all switching pairs of alternatives from $F$ between $(d,e)$ and $(f,s)$, and which are not disjoint from $(f,s)$ will be elements of the set $G$ that we next define.
For the proof of Case 2.3.2, we do not need this characterisation of $G$, but we will use the set $G$ to provide a relatively short proof for Case 2.3.2.

Define two sets of switching pairs from $S(\cg)$ as follows:
\begin{align*}
    G_1=&\;\{(f,k):k\in F\setminus\{f\},\; (d,e)\trianglelefteq(f,k)\vartriangleleft(f,s)\;\text{in}\;S(\cg)\},
\end{align*}
and
\begin{align*}
    G_2=&\;\{(h,k):h,k\in F\setminus\{f\},\; h\ne k,\; \\
&(d,e)\trianglelefteq(h,k)\vartriangleleft(f,k)\vartriangleleft(f,h)\vartriangleleft(f,s)\;\text{in}\;S(\cg)\}.
\end{align*}
Moreover, let $G=G_1\cup G_2$. Note that $g$ is ranked between $f$ and $s$ in $R_B$, and
$(d,e)\trianglelefteq(f,g)\vartriangleleft(f,s)$ in $S(\cg)$.
Recall that $R_{\{f,g,z\}}=fgz$ and $T_{\{f,g,z\}}=zgf$ and so $(f,g)\in G_1\subseteq G$. Hence, $G\ne\emptyset$.
Note that for all $(f,k)\in G_1$, where $k\in F\setminus\{f\}$, we have $(f,k)\in C$ by Condition $A_3$.
Hence, $R_{\{f,k,s,z\}}=fksz$ and $T_{\{f,k,z\}}=zkf$.
Moreover, for all $(h,k)\in G_2$, the switching pairs $(f,k)$, $(f,h)$ are in $S(\cg)$. So $R_{\{f,h,k,s,z\}}=fhksz$ and $T_{\{f,h,k,z\}}=zkhf$.
Further, suppose $(h,k)\in G$ and let $t\in\{h,k\}$ with $t\ne f$. Then note that $(f,t)$ occurs in $S(\cg)$. Moreover, $(h,k)\trianglelefteq(f,t)\vartriangleleft(f,s)$ in $S(\cg)$.

We shall show that $kN_{\{h,k,z\}}3\in N_p(D_{\{h,k,z\}})$ for all $(h,k)\in G$. Further, we show that there is an element $(h',k')\in G$ strictly before $(h,k)$ such that also $k'N_{\{h',k',z\}}3\in N_p(D_{\{h',k',z\}})$. This is impossible, because there cannot be a first element in $G$.
Let $(h,k)\in G$, we will show that $kN_{\{h,k,z\}}3\in N_p(D_{\{h,k,z\}})$.
Note that $kN_{\{h,k,z\}}1\in N_p(D_{\{h,k,z\}})$ by the definition of $(a,b)$.
Suppose $(h,k)\in G_1$. Then $h=f$.
Suppose $kN_{\{f,k,z\}}3\notin N_p(D_{\{f,k,z\}})$ for a contradiction.
Since $R_{\{f,k,s,z\}}=fksz$ and $T_{\{f,k,z\}}=zkf$, we have $\emptyset\ne N_p(\cd_{\{f,k,z\}})\subseteq \{kN_{\{f,k,z\}}1$, $kN_{\{f,k,z\}}3\}$ by Lemma~\ref{wex0}\ref{wex0a}. 
Because $kN_{\{f,k,z\}}3\notin N_p(D_{\{f,k,z\}})$, therefore we have $N_p(\cd_{\{f,k,z\}})=\{kN_{\{f,k,z\}}1\}$.
Since $T_{\{f,k,z\}}=zkf$ and $T_{\{f,s,z\}}=zsf$, we have
$T_{\{f,k,s,z\}}\in\{zksf,zskf\}$.
Moreover, $(f,k)\vartriangleleft(f,s)$ in $S(\cg_{\{f,k,s\}})$. If $(f,k)$ is not the first switching pair in $S(\cg_{\{f,k,s\}})$, then only available option for the first switching pair is $(k,s)$.
Then, we have $(k,s)\vartriangleleft(f,k)\trianglelefteq(f,s)$ in $S(\cg_{\{f,k,s\}})$, which contradicts the dichotomy of Lemma~\ref{2geodesics} for $\cg_{\{f,k,s\}}$.
Therefore, $(f,k)$ is the first switching pair in $S(\cg_{\{f,k,s\}})$.
Hence, $sN_{\{f,s,z\}}1\in N_p(\cd_{\{f,s,z\}})$ by Lemma~\ref{fgz2}.
But $bN_{\{a,b,z\}}1\notin N_p(\cd_{\{a,b,z\}})$ and $(a,b)=(f,s)$. This is a contradiction.

On the other hand, suppose $(h,k)\in G_2$. Then $(d,e)\trianglelefteq(h,k)\vartriangleleft(f,k)\vartriangleleft(f,h)\vartriangleleft(f,s)$  in $S(\cg)$ with
$R_{\{f,h,k,s,z\}}=fhksz$ and $T_{\{f,h,k,z\}}=zkhf$.
Hence, we have $kN_{\{h,k,z\}}3\in N_p(D_{\{h,k,z\}})$ by Lemma~\ref{c232.1}.
We proved that $kN_{\{h,k,z\}}3\in N_p(D_{\{h,k,z\}})$ for all $(h,k)\in G$. 

Now fix $(h,k)\in G$ be the switching pair that appears first in $S(\cg)$ among all elements in $G$.
Then $kN_{\{h,k,z\}}3\in N_p(D_{\{h,k,z\}})$.
In particular, $(h,k)\notin C_{NT}$. Hence by Condition $A_4$, there exists a switching pair $(h',k')$ in $S(\cg)$ such that $(d,e)\trianglelefteq(h',k')\vartriangleleft(h,k)$ and $|\{h',k'\}\cap\{h,k\}|=1$.
Recall the convention that we write the switching pairs such that $(h',k')$ such that $R_{\{h',k'\}}=h'k'$.
Now $(h,k)\in G$, so $(h,k)\vartriangleleft(f,s)=(a,b)$.
Therefore by the definition of $(a,b)$, we have $k'N_{\{h',k',z\}}1\in N_p(D_{\{h',k',z\}})$.
Moreover, $(h',k')\in C$ by Condition $A_3$ and so $R_{\{h',k',z\}}=h'k'z$ and $T_{\{h',k',z\}}=zk'h'$.
Write $\{h',k'\}\cup\{h,k\}=\{h,k,u\}$.  We first show that $u\notin F$. For a contradiction suppose $u\in F$.
Since $h,k\in F$, we have $\{h',k'\}\subseteq\{h,k,u\}\subseteq F$.
There are two cases.
 
{\bf Case A.} Suppose $f\in \{h',k'\}$. 
Recall that $R_{\{h',k'\}}=h'k'$. If $k'=f$, then $h'\notin F$ which contradicts the assumption. Hence, $h'=f$. Moreover, $R_{\{k',s\}}=k's$ since $k'\in F$.
In particular, $R_{\{f,k',s,z\}}=R_{\{h',k',s,z\}}=h'k'sz=fk'sz$ and $T_{\{f,k',z\}}=T_{\{h',k',z\}}=zk'h'=zk'f$.
Now $(f,k')=(h',k')\vartriangleleft(h,k)\vartriangleleft(f,s)$ and so $(f,k')\in G_1$. This contradicts the definition of $(h,k)$ being the first switching pair in $S(\cg)$ among all elements of $G$.

{\bf Case B.} Suppose $f\notin \{h',k'\}$. Since $h',k'\in F$, we have $R_{\{f,h',k',s\}}=fh'k's$. We will first show that $(h',k')$ is the first switching pair in $S(\cg_{\{f,h',k',s\}})$.
Note that there are three pairs of adjacent alternatives in $R_{\{f,h',k',s\}}$.
Hence, the first switching pair in $S(\cg_{\{f,h',k',s\}})$ can only be $(f,h')$, $(h',k')$ or $(k',s)$.

Suppose $(f,h')$ is the first switching pair in $S(\cg_{\{f,h',k',s\}})$. Then $(f,h')\vartriangleleft(h',k')$.
Hence, $(f,h')\vartriangleleft(f,k')\vartriangleleft(h',k')$
by Lemma~\ref{2geodesics3}\ref{2geodesics3c}.
Now if $(h,k)\in G_1$, then $f=h$ and $(h',k')\vartriangleleft(h,k)=(f,k)$.
Since $f\notin \{h',k'\}$ and $|\{h',k'\}\cap\{h,k\}|=1$, either $(f,h')$ or $(f,k')$ equals $(f,k)$.
We have a contradiction with the strict ordering $(f,h')\vartriangleleft(f,k')\vartriangleleft(h',k')\vartriangleleft(f,k)$.
Likewise, if $(h,k)\in G_2$, then $(f,h')\vartriangleleft(f,k')\vartriangleleft(h',k')\vartriangleleft(h,k)\vartriangleleft(f,k)\vartriangleleft(f,h)$. But $|\{h',k'\}\cap\{h,k\}|=1$.
This gives a contradiction with the strict ordering.
Hence, $(f,h')$ cannot be the first switching pair in $S(\cg_{\{f,h',k',s\}})$.

Next suppose $(k',s)$ is the first switching pair in $S(\cg_{\{f,h',k',s\}})$.
Then $(k',s)\vartriangleleft(h',k')$.
Hence, $(k',s)\vartriangleleft(h',s)\vartriangleleft(h',k')$
by Lemma~\ref{2geodesics3}\ref{2geodesics3d}.
Consequently, $(k',s)\vartriangleleft(h',s)\vartriangleleft(h',k')\vartriangleleft(h,k)\vartriangleleft(f,s)$.
Now $|\{h',k'\}\cap\{h,k\}|=1$. Write $\{t\}=\{h',k'\}\cap\{h,k\}$. Recall $f\notin\{h',k'\}$, so $f\ne t$. Therefore, $(f,t)$ occurs in $S(\cg)$. Moreover, $(h,k)\trianglelefteq(f,t)\vartriangleleft(f,s)$.
Suppose $t=h'$. Then $(f,h')$ occurs in $S(\cg)$ and $(h,k)\trianglelefteq(f,h')\vartriangleleft(f,s)$.
Then $(h',s)\vartriangleleft(h,k)\trianglelefteq(f,h')\vartriangleleft(f,s)$ which contradicts the dichotomy of Lemma~\ref{2geodesics} for $\cg_{\{f,h',s\}}$.
Suppose $t=k'$. Then $(f,k')$ occurs in $S(\cg)$ and $(f,k')\vartriangleleft(f,s)$ in $S(\cg_{\{f,k',s\}})$.
Then $(f,k')$ is the first switching pair in $S(\cg_{\{f,k',s\}})$ by Lemma~\ref{2geodesics3}\ref{2geodesics3a}.
This contradicts that $(h',s)$ is the first switching pair.
Hence, $(k',s)$ cannot be the first switching pair in $S(\cg_{\{f,h',k',s\}})$.

Therefore, $(h',k')$ is the first switching pair.
Now $R_{\{f,h',s\}}=fh's$ and note that $(f,s)$ is in $S(\cg_{\{f,h',s\}})$. The first switching pair in $S(\cg_{\{f,h',s\}})$ can only be $(f,h')$ or $(h',s)$. 
Suppose $(h',s)$ is the first switching pair in $S(\cg_{\{f,h',s\}})$.
Then $(h',s)\vartriangleleft(f,s)$ in $S(\cg_{\{f,h',s\}})$.
Because $(h',k')$ is the first switching pair in $S(\cg_{\{f,h',k',s\}})$, we have $(h',k')\vartriangleleft(h',s)\vartriangleleft(f,s)$.
In particular, $(d,e)\trianglelefteq(h',k')\vartriangleleft(h',s)\vartriangleleft(f,s)=(a,b)$, and so $sN_{\{h',s,z\}}1\in N_p(\cd_{\{h',s,z\}})$ by the definition of $(a,b)$.
Moreover, $R_{\{f,h',s,z\}}=fh'sz$. Because $(h',s)$ and $(f,s)$ are switching pairs in $S(\cg_{\{f,h',s\}})$, we have $T_{\{f,h',s\}}\in\{sh'f, sfh'\}$.
Since $T_{\{f,s,z\}}=zsf$, we have $T_{\{f,h',s,z\}}\in\{zsh'f, zsfh'\}$.
Therefore, $sN_{\{f,s,z\}}1\in N_p(\cd_{\{f,s,z\}})$ by Lemma~\ref{c211}. But $bN_{\{a,b,z\}}1\notin N_p(\cd_{\{a,b,z\}})$ and $(a,b)=(f,s)$. This is a contradiction.
Therefore, $(f,h')$ appears in $S(\cg)$ and it is the first switching pair in $S(\cg_{\{f,h',s\}})$. 
Hence, $(f,h')\vartriangleleft(f,s)$.
The previous argument also works with $h'$ replaced by $k'$. So also $(f,k')$ appears in $S(\cg)$ and it is the first switching pair in $S(\cg_{\{f,k',s\}})$. Consequently, $(f,k')\vartriangleleft(f,s)$.
Now $(h',k')$ is the first switching pair in $S(\cg_{\{f,h',k'\}})$, and so we have  $(h',k')\vartriangleleft(f,k')\vartriangleleft(f,h')$ by Lemma~\ref{2geodesics3}\ref{2geodesics3d}.
Recall $R_{\{f,h',k'\}}=fh'k'$. Therefore, $T_{\{f,h',k'\}}=k'h'f$ and then $T_{\{f,h',k',z\}}=zk'h'f$.
Together, we have $(d,e)\vartriangleleft(h',k')\vartriangleleft(f,k')\vartriangleleft(f,h')\vartriangleleft(f,s)$ in $S(\cg)$.
So $(h',k')\in G_2$. This contradicts the definition of $(h,k)$ being the first switching pair in $S(\cg)$ among all elements of $G$.

We proved that $u\notin F$. 
Write $\{t\}=\{h',k'\}\cap\{h,k\}$. Then $t\in F$ and $t\ne u$. Moreover, $t\in\{h',k'\}$ and $u\in\{h',k'\}$. 
So $\{t,u\}\in\{h',k'\}$
By definition of $F$, this means $u$ is ranked before $f$ or after $s$ in $R_B$. Since $R_{\{f,s,z\}}=fsz$ and $z$ is ranked last in $R_B$, we have $R_{\{f,s,u,z\}}=ufsz$ or $R_{\{f,s,u\}}=fsuz$.
We will show both cases are not possible.

{\bf Case 2.3.2.1.} Suppose $R_{\{f,s,u,z\}}=ufsz$. 
Suppose $t=f$. Then $(u,f)=(h',k')\vartriangleleft(h,k)\vartriangleleft(f,s)$.
Hence, Lemma~\ref{2geodesics3}\ref{2geodesics3c} gives $(u,s)$ is a switching pair in $S(\cg)$ and $(u,f)\vartriangleleft(u,s)\vartriangleleft(f,s)$ in $S(\cg_{\{f,s,u\}})$.
Moreover, $T_{\{f,s,u\}}=sfu$.
Then $T_{\{f,s,z\}}=zsf$ gives that $T_{\{f,s,u,z\}}=zsfu$.
Now $(d,e)\trianglelefteq(h',k')=(u,f)=\vartriangleleft(u,s)\vartriangleleft(f,s)=(a,b)$. We have
$sN_{\{s,u,z\}}1\in N_p(\cd_{\{s,u,z\}})$ by the definition of $(a,b)$.
Hence, $sN_{\{f,s,z\}}1\in N_p(\cd_{\{f,s,z\}})$ by Lemma~\ref{c22}\ref{c221}.
But $bN_{\{a,b,z\}}1\notin N_p(\cd_{\{a,b,z\}})$ and $(a,b)=(f,s)$. This is a contradiction.

Hence, $t\ne f$. Recall $t\in F$, so $R_{\{f,s,t\}}=fts$.
Also $R_{\{f,u\}}=uf$. Therefore, $R_{\{f,s,t,u\}}=ufts$.
Note that $\{h',k'\}\cap\{h,k\}=\{t\}$ and use that $(h,k)\in G$, we have $(h',k')\vartriangleleft(h,k)\trianglelefteq(f,t)\vartriangleleft(f,s)$.
Hence, $(u,t)=(h',k')\vartriangleleft(f,t)\vartriangleleft(f,s)$.
Therefore, $(u,f)$ and $(u,s)$ are in $S(\cg)$ and $(u,f)\vartriangleleft(u,t)\vartriangleleft(u,s)\vartriangleleft(f,s)$ by
Lemma~\ref{2geodesics4}\ref{2geodesics4a}.
Consequently, since $R_{\{f,s,u\}}=ufs$ and there are three switching pairs in $S(\cg_{\{f,s,u\}})$, we have $T_{\{f,s,u\}}=sfu$.
Then $T_{\{s,z\}}=zs$ gives that $T_{\{f,s,u,z\}}=zsfu$.
Now $(d,e)\trianglelefteq(h',k')=(u,t)\vartriangleleft(u,s)\vartriangleleft(f,s)=(a,b)$.
Therefore, $sN_{\{s,u,z\}}1\in N_p(\cd_{\{s,u,z\}})$ by the definition of $(a,b)$.
Hence, $sN_{\{f,s,z\}}1\in N_p(\cd_{\{f,s,z\}})$ by Lemma~\ref{c22}\ref{c221}.
But $bN_{\{a,b,z\}}1\notin N_p(\cd_{\{a,b,z\}})$ and $(a,b)=(f,s)$. This is a contradiction.

{\bf Case 2.3.2.2.} Suppose $R_{\{f,s,u,z\}}=fsuz$. 
Suppose $t=f$. Then $(f,u)=(h',k')\vartriangleleft(h,k)\vartriangleleft(f,s)$.
Now $R_{\{f,s,u\}}=fsu$. We have  $(s,u)$ is in $S(\cg)$ and $(s,u)\vartriangleleft(f,u)\vartriangleleft(f,s)$ in $S(\cg_{\{f,s,u\}})$ by Lemma~\ref{2geodesics3}\ref{2geodesics3b}.
Moreover, 
$T_{\{f,s,u\}}=usf$.
Together with $T_{\{u,z\}}=zu$, we have $T_{\{f,s,u,z\}}=zusf$.
Because $(f,u)=(h',k')\vartriangleleft(f,s)=(a,b)$, we have $uN_{\{f,u,z\}}1\in N_p(\cd_{\{f,u,z\}})$ by the definition of $(a,b)$.
Hence, $sN_{\{f,s,z\}}1\in N_p(\cd_{\{f,s,z\}})$ by Lemma~\ref{c22}\ref{c222}.
But $bN_{\{a,b,z\}}1\notin N_p(\cd_{\{a,b,z\}})$ and $(a,b)=(f,s)$. This is a contradiction.

Therefore $t\ne f$. Recall $t\in F$, so $R_{\{f,s,t\}}=fts$.
Also $R_{\{s,u\}}=su$.
Hence, $R_{\{f,s,t,u\}}=ftsu$.
Note that $(h,k)\in G$ and $\{h',k'\}\cap\{h,k\}=\{t\}$. Therefore, $(h',k')\vartriangleleft(h,k)\trianglelefteq(f,t)\vartriangleleft(f,s)$.
Hence, $(t,u)=(h',k')\vartriangleleft(f,t)\vartriangleleft(f,s)$.
Therefore, $(s,u)$ and $(f,u)$ are in $S(\cg)$ and $(s,u)\vartriangleleft(t,u)\vartriangleleft(f,u)\vartriangleleft(f,s)$ by
Lemma~\ref{2geodesics4}\ref{2geodesics4b}.
Since $R_{\{f,s,u\}}=fsu$ and there are three switching pairs in $S(\cg_{\{f,s,u\}})$, we have $T_{\{f,s,u\}}=usf$.
Together with $T_{\{k',z\}}=zk'=zu$, we have $T_{\{f,s,u,z\}}=zusf$.
Now $(d,e)\trianglelefteq(h',k')=(t,u)\vartriangleleft(f,u)\vartriangleleft(f,s)=(a,b)$.
Therefore, $uN_{\{f,u,z\}}1\in N_p(\cd_{\{f,u,z\}})$ by the definition of $(a,b)$.
Hence, $sN_{\{f,s,z\}}1\in N_p(\cd_{\{f,s,z\}})$ by Lemma~\ref{c22}\ref{c222}.
But $bN_{\{a,b,z\}}1\notin N_p(\cd_{\{a,b,z\}})$ and $(a,b)=(f,s)$. This is a contradiction.

This completes the proof of Proposition~\ref{conex2}.
\end{proof}

%
%
%
 \bibliographystyle{splncs04}
    \bibliography{Ref}
    
\end{document}